\newtheorem{theorem}{Theorem}[section]
\newtheorem*{theorem*}{Theorem}
\newtheorem{lemma}[theorem]{Lemma}
\newtheorem{proposition}[theorem]{Proposition}
\newtheorem{corollary}[theorem]{Corollary}
\newtheorem{conjecture}[theorem]{Conjecture}
\newtheorem*{conjecture*}{Conjecture}
\newtheorem{definition}[theorem]{Definition}
\newcommand{\opname}[1]{\operatorname{\mathsf{#1}}}
\renewcommand{\mod}{\opname{mod}\nolimits}
\newcommand{\add}{\opname{add}\nolimits}
\newcommand{\op}{^{op}}
\newcommand{\der}{\cd}
\newcommand{\dimv}{\underline{\dim}\,}
\newcommand{\ind}{\opname{ind}}
\newcommand{\rad}{\opname{rad}\nolimits}
\newcommand{\Z}{\mathbb{Z}}
\newcommand{\N}{\mathbb{N}}
\renewcommand{\P}{\mathbb{P}}
\newcommand{\Hom}{\opname{Hom}}
\newcommand{\Ext}{\opname{Ext}}
\newcommand{\End}{\opname{End}}
\newcommand{\cd}{{\mathcal D}}
\newcommand{\ct}{{\mathcal T}}
\newcommand{\cw}{{\mathcal W}}
\newcommand{\mc}{\mathcal{C}}
\newcommand{\mf}{\mathcal{F}}
\newcommand{\ma}{\mathcal{A}}
\newcommand{\mb}{\mathcal{B}}
\newcommand{\mt}{\mathcal{T}}
\newcommand{\mz}{\mathcal{Z}}
\renewcommand{\hat}[1]{\widehat{#1}}
\begin{document}

\title[denominator vectors of tame type]{On indecomposable $\tau$-rigid modules for cluster-tilted algebras of tame type}

\author{Changjian Fu}
\address{Changjian Fu\\Department of Mathematics\\SiChuan University\\610064 Chengdu\\P.R.China}
\email{changjianfu@scu.edu.cn}
\author{Shengfei Geng}
\address{Shengfei Geng\\Department of Mathematics\\SiChuan University\\610064 Chengdu\\P.R.China}
\email{genshengfei@scu.edu.cn}
\subjclass[2010]{16G20, 13F60}
\keywords{cluster category, cluster-tilted algebra, $\tau$-rigid module, cluster algebra of tame type, denominator conjecture}

\begin{abstract}
For a given cluster-tilted algebra $A$ of tame type, it is proved that different indecomposable $\tau$-rigid $A$-modules have different dimension vectors. This is motivated by Fomin and Zelevinsky's denominator conjecture for cluster algebras. As an application, we establish a weak version of the denominator conjecture for cluster algebras of tame type. Namely, we show that different cluster variables have different denominators with respect to a given cluster for a cluster algebra of tame type. Our approach involves Iyama and Yoshino's construction of subfactors of triangulated categories. In particular, we obtain a description of the subfactors of cluster categories of tame type with respect to an indecomposable rigid object, which is of independent interest.
\end{abstract}

\maketitle

\section{Introduction}
\subsection{Motivation}
Cluster algebras were invented by Fomin and Zelevinsky~\cite{FZ02} with the purpose to provide an algebraic framework for the study of total positivity in algebraic groups and canonical bases in quantum groups. They are commutative algebras defined via a set of generators: {\it cluster variables}, constructed inductively by mutations. The cluster variables were gathered into overlapping sets of fixed cardinality called {\it clusters}. Monomials of cluster variables all of which belong to the same cluster are {\it cluster monomials}.
Inspired by Lusztig's parameterization of canonical bases in the theory of quantum groups, Fomin and Zelevinsky~\cite{FZ03-2} introduced the {\it denominator vector} for each cluster monomial with respect to a given cluster and formulated the so-called {\bf denominator conjecture}: {\it different cluster monomials have different denominator vectors}. In contrast to other conjectures of cluster algebras, little progress has been made towards proving the denominator conjecture. One of the reason is that there does not exist a simple formula relating the denominator vectors of a cluster variable with respect to different seeds. To our best knowledge, the denominator conjecture has not yet been verified for cluster algebras of finite type.

In order to understand the denominator conjecture, one would first to consider the following weak version: {\it different cluster variables have different denominator vectors with respect to a given cluster}. Nevertheless, the weak version has only been verified for very restricted cases, see~\cite{SZ, CK06,CK08,GP, AD} for instance. For example, it has been established by Geng and Peng~\cite{GP} for skew-symmetric cluster algebras of finite type and by Caldero and Keller~\cite{CK06,CK08} for acyclic cluster algebras with respect to acyclic initial seeds\footnote{Using the representation theory of hereditary algebras, Caldero and Keller's work implies the denominator conjecture for acyclic cluster algebras with respect to acyclic initial seeds.}. For cluster algebras of rank ~$2$, it has been verified by Sherman and Zelevinsky~\cite{SZ}. By the work of Nakanishi and Stella~\cite{NS}, we also know that the weak denominator conjecture holds for cluster algebras of finite type.

One of the key ingredient in the work of Caldero and Keller~\cite{CK06,CK08} is that there exist categorifications for acyclic cluster algebras.
More precisely, let $Q$ be an acyclic quiver and $\ma(Q)$ the acyclic cluster algebra associated to $Q$.
Denote by $\mc_Q$ the cluster category~\cite{BMRRT} associated to $Q$, which is a $2$-Calabi-Yau triangulated category with cluster-tilting objects.  Let $\Sigma$ be the suspension functor of $\mc_Q$. It has been proved in~\cite{CK06} that there is a bijection between the cluster variables of $\ma(Q)$ and the indecomposable rigid objects in $\mc_Q$. Moreover, the clusters of $\ma(Q)$ correspond to the basic cluster-tilting objects of $\mc_Q$. We fix a cluster $Y$ of $\ma(Q)$ as initial seed, let $\Sigma T$ be the corresponding basic cluster-tilting object in~$\mc_Q$. Denote by $\Gamma=\End_{\mc_Q}(T)$ the cluster-tilted algebra~\cite{BMR07} associated to $T$ and by $\tau$ the Auslander-Reiten translation of $\Gamma$-modules. There is a bijection between the non-initial cluster variables of $\ma(Q)$  and the indecomposable $\tau$-rigid $\Gamma$-modules~\cite{AIR}. If $Y$ is an acyclic seed, Caldero and Keller~\cite{CK06} proved that the denominator vector of a non-initial cluster variable is precisely the dimension vector of the corresponding indecomposable $\tau$-rigid $\Gamma$-modules.
However, such an interpretation is no longer true even for acyclic cluster algebras of tame type with respect to non-acyclic initial seeds~({\it cf.}~\cite{CI, FK, BMR09, BM10,MR}). Nevertheless, for cluster algebras of tame type, Buan and Marsh~\cite{BM10}({\it cf.} also~\cite{BMR09}) established an explicitly relation between the denominator vectors of non-initial cluster variables and the dimension vectors of indecomposable $\tau$-rigid modules over corresponding cluster-tilted algebras.
\subsection{Main results}
The main purpose of this paper is to establish the weak denominator conjecture for cluster algebras of tame type by using the explicitly relation obtained in~\cite{BM10}. Let $Q$ be a connected extended Dynkin quiver and $\ma(Q)$ the associated cluster algebra of tame type. Denote by $\mc_Q$ the cluster category of $Q$. Note that the Auslander-Reiten quiver of $\mc_Q$ consists of a connected component with shape $\mathbb{Z}Q$ (objects in this component are called {\it transjective}), infinitely many homogeneous tubes and finitely many non-homogeneous tubes ({\it i.e.} tube with rank strictly greater than $1$). If $d_1,\cdots, d_r$ are precisely the ranks of non-homogeneous tubes, we also call $\mc_Q$ a {\it cluster category of tame type $(d_1,\cdots, d_r)$}.
For a basic cluster-tilting object $T\in \mc_Q$,  $\Gamma=\End_{\mc_Q}(T)^{op}$ is called a {\it cluster-tilted algebra of tame type}. Recall
that there is a bijection between the non-initial cluster variables of $\ma(Q)$ and the indecomposable $\tau$-rigid $\Gamma$-modules. The first main result of this paper is an analogue of the weak denominator conjecture, which answers a question of~\cite{BMR09} for cluster-tilted algebras of tame type.
\begin{theorem}[Theorem~\ref{t:main-theorem-dimension-vector}]~\label{t:main-result-1}
Let $\Gamma$ be a cluster-tilted algebra of tame type. Then different indecomposable $\tau$-rigid $\Gamma$-modules have different dimension vectors.
\end{theorem}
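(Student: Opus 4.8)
The plan is to transfer the problem from the module category of $\Gamma$ to the cluster category $\mc_Q$, where more structure is available, and then argue component by component in the Auslander--Reiten quiver. Via the bijection between indecomposable $\tau$-rigid $\Gamma$-modules and indecomposable rigid objects of $\mc_Q$ not isomorphic to a summand of $\Sigma T$ (equivalently, the bijection with non-initial cluster variables), each indecomposable $\tau$-rigid module $M$ lifts to an indecomposable rigid object $X_M$ of $\mc_Q$, and its $\Gamma$-dimension vector is recorded by $\Hom_{\mc_Q}(T, X_M)$ (or the appropriate index-type invariant). So Theorem~\ref{t:main-result-1} becomes the statement that the map sending an indecomposable non-projective rigid object $X$ of $\mc_Q$ to the class of $\Hom_{\mc_Q}(T,X)$ in $\go$ (equivalently its dimension vector as a $\Gamma$-module) is injective on the set of such $X$. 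The key technical tool, as advertised in the abstract, is the Iyama--Yoshino subfactor construction: mod out $\mc_Q$ by an indecomposable rigid object $R$ to obtain a $2$-Calabi--Yau triangulated category $\mc_Q/(R)$ (or more precisely the subfactor $(^\perp\Sigma R)/(R)$), and the first order of business is to \emph{identify} this subfactor explicitly for $\mc_Q$ of tame type. I expect the answer to be a product of (smaller) cluster categories of Dynkin or tame type together with some orbit categories of derived categories of tubes; this identification is the ``independent interest'' result flagged in the abstract and the real engine of the induction.

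The main steps I would carry out are as follows. First, recall from Buan--Marsh the explicit relation between $\dimv M$ and the denominator vector / the transjective-vs-tube decomposition of $X_M$, so that the problem is genuinely about $\mc_Q$ and $T$. Second, establish the subfactor identification: for an indecomposable rigid object $R$ of $\mc_Q$, describe $\mc_Q/(R)$ as an explicit $2$-Calabi--Yau category, distinguishing the cases where $R$ is transjective and where $R$ lies in a non-homogeneous tube (in the tube case one expects the rank of that tube to drop by one, and a new homogeneous-type piece to appear, while the transjective component and other tubes are essentially unaffected). Third, set up the induction on the number of indecomposable summands of $T$ that differ from those of the ``standard'' tilting object, or alternatively on the rank of $Q$: choose a common indecomposable summand (or use mutation to reduce $T$ towards an acyclic tilting object), pass to the subfactor, and use the inductive hypothesis there. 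The base case is the acyclic situation, which is exactly Caldero--Keller / the hereditary case where the denominator vector \emph{is} the dimension vector over the path algebra, so distinctness is classical (different indecomposable preprojective/preinjective/regular rigid modules over a tame hereditary algebra have different dimension vectors).

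More concretely, for the inductive step I would fix two indecomposable $\tau$-rigid modules $M, N$ with $\dimv M = \dimv N$ and aim to show $X_M \cong X_N$. Using the additivity of dimension vectors along the relevant triangles and the explicit Buan--Marsh formula, a coincidence of dimension vectors over $\Gamma$ forces a coincidence of the ``reduced'' invariants in the subfactor category $\mc_Q/(R)$ for a suitable common rigid summand $R$; the inductive hypothesis (distinctness of dimension vectors of indecomposable $\tau$-rigid modules over the cluster-tilted algebra attached to the subfactor, which is again of tame type or now of finite type) then yields that the images of $X_M$ and $X_N$ agree in the subfactor, and a lifting/rigidity argument promotes this to $X_M \cong X_N$ in $\mc_Q$ itself. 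One must also treat separately the finitely many rigid objects in non-homogeneous tubes, where dimension-vector coincidences are a genuinely finite combinatorial check governed by the tube ranks $d_1, \dots, d_r$.

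The step I expect to be the main obstacle is the explicit description of the Iyama--Yoshino subfactor $\mc_Q/(R)$ when $R$ lies in a non-homogeneous tube: one has to verify that the subfactor is still a nice (in particular Hom-finite, Krull--Schmidt, $2$-Calabi--Yau with cluster-tilting objects) triangulated category, pin down its Auslander--Reiten quiver — presumably a smaller transjective component plus tubes, one of which has had its rank reduced — and match up rigid objects and their index/dimension-vector invariants on the nose so that the inductive bookkeeping of dimension vectors actually closes up. Handling the interaction between the transjective component and the tubes (and making sure nothing pathological like a non-Krull--Schmidt or non-Hom-finite quotient arises) is where the bulk of the work will lie; by contrast, once the subfactor is understood, the dimension-vector additivity and the induction should be comparatively routine, and the tame hereditary base case is already in the literature.
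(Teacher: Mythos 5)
Your overall architecture matches the paper's: transfer to the cluster category via the bijection of Lemma~\ref{l:bijection-tau-rigid-and-rigid}, identify the Iyama--Yoshino subfactor with respect to an indecomposable rigid object (this is Theorem~\ref{t:subfactor-tame-type}, and your prediction that a tube's rank drops and a small type-$A$ piece splits off is correct), and induct with the cluster-concealed/hereditary situation as base case. The paper's induction parameter is specifically $|R|$, the number of regular summands of $T=T_{tr}\oplus R$, which is close to what you suggest.

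However, there is a genuine gap in your inductive step. You propose that a coincidence $\dimv M_\Gamma=\dimv N_\Gamma$ can always be pushed into the subfactor $^\perp(\Sigma R_0)/\add R_0$ for a suitable common rigid summand $R_0$ and resolved there by induction. This only works when $\Hom_{\mc}(R_0,M)=0=\Hom_{\mc}(R_0,N)$, i.e.\ when $M_\Gamma,N_\Gamma$ genuinely descend to modules over $\Gamma/\Gamma e_{R_0}\Gamma$; and even in the regular--regular case this vanishing has to be \emph{proved}, which the paper does via the dichotomy of Lemma~\ref{l:property-tame-clustercategory}~(4) ($\dim_k\Hom(R_0,N)=2$ iff $q.l.(N)=t-1$, else $1$). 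The cases where $\Hom(R,M)\neq 0$ and $M$ or $N$ is transjective cannot be handled by subfactor reduction at all. For these the paper needs two further tools absent from your proposal: (i) the exchange-compatibility machinery (Lemma~\ref{l:compatible-objects}, Proposition~\ref{p:dimension-vector-mutation}, Corollary~\ref{c:dimension-vector-equality}), which transports the equality of dimension vectors across non-regular mutations, in particular from $T$ to $\Sigma^{lm}T$ using $\Sigma^m$-periodicity of the regular part; and (ii) the finiteness Lemma~\ref{l:hom-vanish-set}, proved via the affine root system, which yields a contradiction from the resulting infinitely many rigid objects $X$ with $\dim_k\Hom(X,M)$ bounded. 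Relatedly, you locate the main difficulty in establishing the subfactor description, but in the paper that part (Section~\ref{S:subfactor}) is comparatively clean via Keller--Reiten recognition and the AR-quiver analysis; the delicate work is exactly the mixed transjective/regular case analysis that your ``comparatively routine'' induction elides.
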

We remark that the analogue results have been established for cluster-tilted algebras of finite type in~\cite{GP, Ri11} and for cluster-concealed algebras in~\cite{AD,G}.  Zhang~\cite{Zh} also proved that all the indecomposable $\tau$-rigid modules over a gentle algebra arising from an unpunctured surface are determined by their dimension vectors. 

By applying Theorem~\ref{t:main-result-1} and the main result of~\cite{BM10}, we prove the second main result of this paper in Section~\ref{S:denominator}. Namely,
\begin{theorem}[Theorem~\ref{t:denominator-conjecture-tame-type}]
Let $Q$ be a connected extended Dynkin quiver and $\ma(Q)$ the associated cluster algebra. Then different cluster variables of $\ma(Q)$ have different denominator vectors with respect to a given cluster.
\end{theorem}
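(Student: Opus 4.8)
The plan is to fix a cluster $Y=\{y_1,\dots,y_n\}$ of $\ma(Q)$ and to compare the denominator vectors, taken with respect to $Y$, of all cluster variables of $\ma(Q)$. Passing through the cluster category, $Y$ corresponds to a basic cluster-tilting object of $\mc_Q$, with associated cluster-tilted algebra $\Gamma$ of tame type; by the bijection recalled above, the non-initial cluster variables of $\ma(Q)$ are indexed by the indecomposable $\tau$-rigid $\Gamma$-modules $M_x$, while by the Laurent phenomenon each initial cluster variable $y_i$ has denominator vector $-e_i$. So it suffices to verify three things: (i) distinct initial cluster variables have distinct denominator vectors, which is immediate since $-e_i\neq -e_j$ for $i\neq j$; (ii) no non-initial cluster variable has denominator vector of the form $-e_i$; and (iii) distinct non-initial cluster variables have distinct denominator vectors.

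For (ii) and (iii) I would use the explicit relation of Buan and Marsh~\cite{BM10} between the denominator vector $\mathbf{d}(x)$ of a non-initial cluster variable and the dimension vector $\underline{\dim} M_x$ of the corresponding indecomposable $\tau$-rigid $\Gamma$-module. Part (ii) follows from that relation together with a direct check for the finitely many modules to which its correction term applies: away from those, $\mathbf{d}(x)=\underline{\dim} M_x$ is nonzero and nonnegative, hence never $-e_i$. For (iii), the decisive input is Theorem~\ref{t:main-result-1}, which says that $x\mapsto \underline{\dim} M_x$ is injective on non-initial cluster variables. The Buan--Marsh formula expresses $\mathbf{d}(x)$ in terms of $\underline{\dim} M_x$ and the combinatorial position of $M_x$ in the Auslander--Reiten quiver of $\Gamma$ (in particular, which tube, if any, it lies in), in a way that lets one recover $\underline{\dim} M_x$ from $\mathbf{d}(x)$; thus $\mathbf{d}(x)=\mathbf{d}(x')$ forces $\underline{\dim} M_x=\underline{\dim} M_{x'}$ and hence, by Theorem~\ref{t:main-result-1}, $M_x\cong M_{x'}$, \ie $x=x'$.

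The delicate point is precisely this correction term. It is nontrivial only for the indecomposable $\tau$-rigid modules arising from the finitely many non-homogeneous tubes of $\mc_Q$ --- which is exactly why the tame type $(d_1,\dots,d_r)$ intervenes --- and there are only finitely many such modules, since each tube of $\mc_Q$ contains only finitely many indecomposable rigid objects. For these one must verify, tube by tube, that the corrected denominator vectors are pairwise distinct and are distinct from the denominator vectors $\underline{\dim} M$ of the transjective $\tau$-rigid modules; using the explicit description of the indecomposables in a tube this is a finite computation, governed by the ranks $d_1,\dots,d_r$. I expect this handling of the correction term --- rather than the formal deduction from Theorem~\ref{t:main-result-1} --- to be the only genuine obstacle in the proof of the present theorem; the real difficulty, the injectivity of dimension vectors on $\tau$-rigid modules asserted in Theorem~\ref{t:main-result-1}, occupies the remainder of the paper and may be assumed here.
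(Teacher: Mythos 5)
Your overall strategy---reduce to Theorem~\ref{t:main-theorem-dimension-vector} via the Buan--Marsh formula and isolate the correction term as the obstacle---is the same as the paper's, and your parts (i), (ii), and the comparison among the finitely many ``corrected'' regular modules are fine (these correspond to Cases 1 and 2 of the paper's proof). But there is a genuine gap at exactly the point you flag. First, the claim that the Buan--Marsh formula ``lets one recover $\dimv M_x$ from $\operatorname{den}(x)$'' is unjustified: whether the correction $-e_i$ has been applied depends on the position of $M_x$ in the AR quiver (same tube as $T_i$, $q.l.(T_i)=t-1$, $M_x\notin\cw_{\tau T_i}$), which is not readable from the vector $\operatorname{den}(x)$ alone. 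Concretely, a transjective $M$ has $\operatorname{den}(x_M)=\dimv M_\Gamma$ while a corrected regular $N$ has $\operatorname{den}(x_N)=\dimv N_\Gamma-e_i$; these could a priori coincide without the dimension vectors coinciding, so Theorem~\ref{t:main-theorem-dimension-vector} does not apply directly to this pair.

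Second, your fallback---``a finite computation, tube by tube''---does not cover this case either. The corrected modules are indeed finitely many, but they must be compared against the denominator vectors of the \emph{infinitely many} transjective $\tau$-rigid modules, which is not a finite check and is where the real work lies. The paper resolves it (Case 3 of the proof of Theorem~\ref{t:denominator-conjecture-tame-type}) by a separate argument: assuming $\operatorname{den}(x_M)=\operatorname{den}(x_N)$ with $M$ transjective and $N$ regular and corrected at coordinate $i$, one has $\dim_k\Hom_{\mc_Q}(T_j,M)=\dim_k\Hom_{\mc_Q}(T_j,N)$ for $j\neq i$ and $\dim_k\Hom_{\mc_Q}(T_i,M)=\dim_k\Hom_{\mc_Q}(T_i,N)-1$; Lemma~\ref{l:mutation-transjective-vs-regular} shows this configuration propagates under non-regular mutations, Lemma~\ref{l:tame-type-non-regular-mutation} lets one reach $\Sigma^{lm}T$ (where $\Sigma^m$ fixes $R$ and $N$) by such mutations, and one concludes that $\dim_k\Hom_{\mc_Q}(\Sigma^{lm}T_j,M)$ is independent of $l$ for a transjective summand $T_j$, contradicting the finiteness statement of Lemma~\ref{l:hom-vanish-set}. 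Some argument of this kind---ruling out equality against an infinite family---is indispensable; without it your proof is incomplete precisely at the step you yourself identify as the only genuine obstacle.
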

Our proof of Theorem~\ref{t:main-result-1} relies on the investigation of Iyama and Yoshino's construction of subfactors for cluster categories. In particular, we obtain a description of subfactor of cluster categories of tame type with respect to an indecomposable rigid object, which is of independent interest. To state the result, let us first recall some more notation. For a cluster category $\mc$ with suspension functor~$\Sigma$ and a rigid object $Z\in \mc$, denote by $\add Z$ the subcategory of $\mc$ consisting of objects which are finite direct sum of direct summands of $Z$. Set
\[~^\perp(\add \Sigma Z)=\{X\in \mc~|~\Hom_\mc(X, \Sigma M)=0~\text{for~$\forall~M\in \add Z$}\}.
\]
It has been proved in~\cite{IY} that the subfactor $~^\perp(\add\Sigma Z)/\add Z$ inherits a triangle structure from $\mc$. The following result suggests that cluster categories are closed under subfactors, which seems to be known for experts.
\begin{theorem}[Theorem~\ref{t:subfactor-cluster-category}]
 Let $\mc$ be a cluster category and $Z$ a rigid object of $\mc$. The subfactor $^\perp(\add\Sigma Z)/\add Z$ is a cluster category.
\end{theorem}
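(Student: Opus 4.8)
The plan is to realise the subfactor as an Iyama--Yoshino Calabi--Yau reduction and then to recognise it as a cluster category by controlling the endomorphism algebra of one distinguished cluster-tilting object. We may assume $\mc=\mc_Q$ for a (not necessarily connected) acyclic quiver $Q$; recall that $\mc_Q$ is a Hom-finite, Krull--Schmidt, algebraic $2$-Calabi--Yau triangulated category. Since $Z$ is rigid it is a direct summand of a basic cluster-tilting object $T=Z\oplus Z'$ of $\mc$ (every rigid object of a cluster category completes to a cluster-tilting object; for the applications in the present paper $Z$ is indecomposable and this is automatic). By~\cite{IY} the subfactor $\mathcal U:={}^\perp(\add\Sigma Z)/\add Z$ is again Hom-finite, Krull--Schmidt and $2$-Calabi--Yau; one checks from the construction that the dg enhancement of $\mc$ descends, so that $\mathcal U$ is algebraic; and the image $\overline{Z'}$ of $Z'$ is a basic cluster-tilting object of $\mathcal U$. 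A direct computation with the Iyama--Yoshino construction identifies $\End_{\mathcal U}(\overline{Z'})\cong\Gamma/\Gamma e\Gamma$, where $\Gamma=\End_{\mc}(T)^{op}$ is the cluster-tilted algebra attached to $T$ and $e\in\Gamma$ is the idempotent cut out by the summand $Z$.

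The heart of the matter is to show that $B:=\Gamma/\Gamma e\Gamma$ is a cluster-tilted algebra of acyclic type. Write $\Gamma$ as the Jacobian algebra of a quiver with potential $(Q_\Gamma,W_\Gamma)$ lying in the mutation class of $(Q,0)$; then $B$ is the Jacobian algebra of the restriction of $(Q_\Gamma,W_\Gamma)$ to the vertices not occurring in $e$, because the Jacobian relations attached to the deleted arrows become trivial in the quotient. So it suffices to prove that this restricted quiver with potential is again mutation equivalent to an acyclic quiver with zero potential. One way to organise the argument is through the Iyama--Yoshino bijection between the cluster-tilting objects of $\mathcal U$ and the cluster-tilting objects of $\mc$ having $Z$ as a summand, compatibly with mutation; via the categorification of $\ma(Q)$ the latter correspond to the clusters of $\ma(Q)$ containing the fixed compatible subset attached to $Z$, and one shows that these form the cluster complex of a cluster algebra of acyclic type $Q'$. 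In the tame case --- which is all that is needed here --- one can argue instead at the level of Auslander--Reiten quivers: passing to $\mathcal U$ leaves the transjective component and the homogeneous tubes essentially unchanged and replaces a non-homogeneous tube of rank $d$ that carries an indecomposable summand of $Z$ by a tube of strictly smaller rank, so that $\mathcal U$ is visibly a cluster category of tame type with fewer, or lower-rank, non-homogeneous tubes. This step is the main obstacle: ``deleting an idempotent'' does not obviously preserve the class of cluster-tilted algebras of acyclic type, so certifying that $B$ has the required form requires genuine bookkeeping with quivers and potentials, or --- in the tame case --- a careful analysis of how Calabi--Yau reduction acts on the Auslander--Reiten quiver.

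It then remains to invoke the recognition theorem for cluster categories (Keller--Reiten, combined with Amiot's cluster category of a Jacobi-finite quiver with potential): an algebraic, Hom-finite $2$-Calabi--Yau triangulated category that admits a cluster-tilting object whose endomorphism algebra is a cluster-tilted algebra of acyclic type $Q'$ is triangle equivalent to $\mc_{Q'}$. Applied to $\mathcal U$ and $\overline{Z'}$ this gives $\mathcal U\simeq\mc_{Q'}$, a cluster category. Finally, the degenerate case $Z=T$, where $\mathcal U=0$ is the cluster category of the empty quiver, and the case where $Q'$ is disconnected, where $\mathcal U$ is a finite product of cluster categories and hence itself a cluster category, need no further comment.
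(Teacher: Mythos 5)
Your overall strategy is the same as the paper's: pass to the Iyama--Yoshino reduction, check it is algebraic, identify the endomorphism algebra of the induced cluster-tilting object with the idempotent quotient $\Gamma/\Gamma e\Gamma$ of the cluster-tilted algebra $\Gamma=\End_{\mc}(T)^{op}$, and conclude via the Keller--Reiten recognition theorem. However, at the step you yourself flag as ``the heart of the matter'' --- showing that $B=\Gamma/\Gamma e\Gamma$ is again a cluster-tilted algebra of acyclic type --- you leave a genuine gap: you offer only two unexecuted sketches (quiver-with-potential bookkeeping, and an AR-quiver analysis restricted to the tame case) and explicitly concede that the required verification has not been carried out. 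Neither sketch closes the gap as written: the AR-quiver picture in the tame case does not by itself certify that the subfactor is a cluster category (that is essentially what Theorem~\ref{t:subfactor-tame-type} later deduces \emph{from} this theorem), and the QP argument is not developed. The missing ingredient is already available and is exactly what the paper uses: by Theorem~\ref{t:cluster-tilted-algebras}~(b) (Buan--Marsh--Reiten), the quotient of a cluster-tilted algebra by the ideal generated by an idempotent is again a cluster-tilted algebra. This is a quotable theorem, not an open bookkeeping problem.

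Once that fact is in hand, the remaining steps need one more small bridge that you pass over: knowing that $\End(\overline{Z'})$ is a cluster-tilted algebra tells you its quiver is connected to an acyclic quiver by a sequence of Fomin--Zelevinsky matrix mutations, but to produce an actual cluster-tilting object of the subfactor with acyclic quiver (which is what the Keller--Reiten recognition theorem asks for) you must transport these combinatorial mutations to categorical mutations. The paper does this by observing that every cluster-tilting object of the subfactor has quiver without loops or $2$-cycles (again by Theorem~\ref{t:cluster-tilted-algebras}), so the subfactor carries a cluster structure in the sense of Theorem~\ref{t:cluster-structure}, and categorical mutation then tracks matrix mutation. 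Your alternative ending --- invoking a stronger recognition statement via Amiot's construction applied directly to a cluster-tilted algebra of acyclic type --- can be made to work, but it imports heavier machinery than needed and still rests on the unproven claim about $B$. In short: correct skeleton, but the load-bearing step is asserted as an obstacle rather than proved, and it is resolved by citing Theorem~\ref{t:cluster-tilted-algebras}~(b) together with the BIRS cluster-structure argument.
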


Let $\der$ be a triangulated category with triangulated subcategories $\der_1$ and $\der_2$. We say that $\der$ is a {\it direct sum of $\der_1$ and $\der_2$}, provided that:
\begin{itemize}
\item[$\bullet$] any object $M\in \der$ is a direct sum of objects $M_1\in \der_1$ and $M_2\in \der_2$;
\item[$\bullet$] $\Hom_\der(\der_1, \der_2)=0=\Hom_{\der}(\der_2,\der_1)$.
\end{itemize} 
\begin{theorem}[Theorem~\ref{t:subfactor-tame-type}]
Let $\mc$ be a cluster category of tame type $(d_1,\cdots, d_r)$ and $Z$ an indecomposable rigid object of $\mc$. Denote by $\mc':=~^\perp(\add\Sigma Z)/\add Z$. We have
\begin{itemize}
\item[(i)] if $Z$ is transjective, then $\mc'$ is a cluster category of finite type;
\item[(ii)] if $Z$ lies in the tube of rank ~$d_l$ with quasi-length $q.l.(Z)=t<d_l-1$, then $\mc'$ is the direct sum of a cluster category of tame type $(d_1,\cdots, d_{l-1}, d_l-t, d_{l+1}, \cdots, d_r)$ with a cluster category of Dynkin quiver of type $A_{t-1}$.
\end{itemize}
\end{theorem}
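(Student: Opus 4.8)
Write $\mc=\mc_{\ch}$ with $\ch=\mmod kQ$ for a connected extended Dynkin quiver $Q$, and put $n=|Q_0|$. The plan is to identify $\mc'$ as the cluster category of a perpendicular subcategory of $\ch$, and then to read off its type --- finite versus tame, and in the tame case the non-homogeneous tube ranks, which pin down a cluster category of domestic type up to triangle equivalence --- from the Euler form and the perpendicular calculus of Geigle--Lenzing. By Theorem~\ref{t:subfactor-cluster-category} we already know that $\mc'$ is a cluster category; and its rank is immediate from Iyama--Yoshino, since completing $Z$ to a basic cluster-tilting object $Z\oplus\bar Z$ of $\mc$ (which has $n$ indecomposable summands) makes the image of $\bar Z$ a basic cluster-tilting object of $\mc'$, so the total rank of $\mc'$ equals $n-1$.

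The first step is a reduction to the case $Z\in\ch$. Lifting $Z$ to an indecomposable object of $\der^b(kQ)$ in the standard fundamental domain, rigidity forces this lift to be $\Sigma^{i}M$ for an exceptional $kQ$-module $M$ and $i\in\{0,1\}$, with $M$ transjective (resp., regular of quasi-length $t$ in a tube of rank $d_l$) exactly when $Z$ is. As $\Sigma$ is a triangle autoequivalence of $\mc$, applying $\Sigma^{-i}$ identifies the subfactor at $Z$ with the subfactor at $M$, so we may assume $Z=M\in\ch$ is an exceptional module. The key lemma to establish is then that Iyama--Yoshino reduction at such a $Z$ returns the cluster category of the perpendicular category:
\[
\mc'={}^{\perp}(\add\Sigma Z)/\add Z\ \simeq\ \mc_{\ch'},\qquad \ch':={}^{\perp}Z\subseteq\ch
\]
(the choice of left or right perpendicular being immaterial below), where ${}^{\perp}Z$ is again hereditary with $n-1$ simple objects. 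Here one uses $\Hom_{\mc}(X,\Sigma Z)\cong\Hom_{\ch}(X,\tau Z)\oplus\Ext^1_{\ch}(X,Z)$ for $X\in\ch$ (and the analogous formula on the shifted projectives) together with Auslander--Reiten duality, matching ${}^{\perp}(\add\Sigma Z)$ with the image of a fundamental domain of $\der^b(\ch')$ and the Iyama--Yoshino triangle structure with that of $\mc_{\ch'}$.

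Granting the lemma, part (i) follows from the Euler form. If $Z$ is transjective, its defect is nonzero, i.e. $\delta$, the minimal positive imaginary root of $kQ$, is not orthogonal to $[Z]$ for the Euler form; hence $\delta\notin[Z]^{\perp}=K_0(\ch')$, so the positive semidefinite Euler form of $\ch$, restricted to $K_0(\ch')$, loses its one-dimensional radical $\Z\delta$ and becomes positive definite. Therefore every connected component of $\ch'$ is of Dynkin type, and $\mc'=\mc_{\ch'}$ is of finite type. For part (ii), $Z$ is an exceptional uniserial regular module of quasi-length $t$ in the tube $\mt_l$ of rank $d_l$, and the perpendicular calculus for modules in a tube (equivalently, for exceptional sheaves on a domestic weighted projective line) yields
\[
\ch'={}^{\perp}Z\ \simeq\ \ch''\times\mmod kA_{t-1},
\]
where $\mmod kA_{t-1}$ is a serial subcategory of $\mt_l$ spanned by the quasi-composition factors of $Z$ other than its quasi-socle, and $\ch''$ is a hereditary category whose non-homogeneous tubes are the $\mt_j$ with $j\neq l$ --- these survive intact, being $\Hom$- and $\Ext^1$-orthogonal to $Z$ --- together with one tube of rank $d_l-t$. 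That $\ch''$ is tame hereditary of Euclidean type follows as in part (i), but with the radical retained: since $Z$ is regular, $\delta$ is orthogonal to $[Z]$, so $\delta\in K_0(\ch')$, hence $\delta\in K_0(\ch'')$, and the Euler form of $\ch''$ keeps a one-dimensional radical. Thus $\mc_{\ch''}$ is the cluster category of tame type $(d_1,\cdots,d_{l-1},d_l-t,d_{l+1},\cdots,d_r)$; and since the two factors of $\ch'$ are mutually $\Hom$- and $\Ext^1$-orthogonal, the corresponding two summands of $\mc_{\ch'}$ are $\Hom_{\mc'}$-orthogonal to each other in both directions, which is the asserted direct sum decomposition.

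The main obstacle is the key lemma of the second paragraph --- identifying the Iyama--Yoshino subfactor with the cluster category of a perpendicular category --- together with the precise identification, in part (ii), of the perpendicular category obtained, for which the perpendicular calculus of Geigle--Lenzing and Schofield is the natural tool. One should also mind the boundary conventions --- a tube of rank $1$ being homogeneous, and the hypothesis $t<d_l-1$ guaranteeing $d_l-t\geq2$, so that no non-homogeneous tube disappears --- so that the output is literally of the stated type; the remaining verifications amount to bookkeeping with standard facts about $\der^b(kQ)$ in tame type and the classification of domestic hereditary categories by their tube ranks.
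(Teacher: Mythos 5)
Your strategy is genuinely different from the paper's: you want to identify the Iyama--Yoshino subfactor with the cluster category of a Geigle--Lenzing--Schofield perpendicular category ${}^{\perp}Z\subseteq\mod kQ$ and then read off the type from the Euler form and the perpendicular calculus in tubes. The paper never leaves the cluster category: it computes the AR components of the subfactor directly (the wing $\cw_Z$ yields the $A_{t-1}$ component, the remainder of the tube of $Z$ yields a tube of rank $d_l-t$, the other tubes survive, and transjective objects stay transjective), proves part (i) by counting indecomposable rigid objects via Lemma~\ref{l:hom-vanish-set}, and proves connectedness of the tame summand in part (ii) by a contradiction argument again resting on Lemma~\ref{l:hom-vanish-set}.

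The problem is that your ``key lemma'' --- which you yourself flag as the main obstacle --- is left entirely unproved, and the sketch you give for it does not work as stated. For a module $X$, the condition $\Hom_{\mc}(X,\Sigma Z)=0$ unwinds (by Lemma~\ref{l:compute-morphism} and the $2$-Calabi--Yau property) to the \emph{symmetric} condition $\Ext^1_{kQ}(X,Z)=0=\Ext^1_{kQ}(Z,X)$, whereas the perpendicular category ${}^{\perp}Z$ is cut out by the \emph{one-sided} condition $\Hom_{kQ}(X,Z)=0=\Ext^1_{kQ}(X,Z)$. These are different sets of modules, so ``matching ${}^{\perp}(\add\Sigma Z)$ with the image of a fundamental domain of $\der^b(\ch')$'' cannot be done by inspection; any triangle equivalence $\mc'\simeq\mc_{\ch'}$ has to be produced by other means (for instance by exhibiting a cluster-tilting object of $\mc'$ whose quiver is an acyclic quiver presenting $\ch'$, and then tracking how the equivalence acts on AR components --- which you also need in order to transfer the tube-rank computation of the perpendicular calculus back to $\mc'$ in part (ii)). Since both parts of your argument are routed through this unproved identification, the proof has a genuine gap. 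The surrounding ingredients (positive definiteness of the restricted Euler form forcing Dynkin type in case (i); the reduction of the weight $d_l$ to $d_l-t$ plus an $A_{t-1}$ factor for an exceptional regular module of quasi-length $t$ in case (ii); the rank count $n-1$) are standard and would do their job once the key lemma is established, but as written the proof is incomplete.
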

The paper is structured as follows. In Section~\ref{S:cluster-tilting-theorey}, we recall basic definitions and facts for cluster-tilting theory. Section~\ref{S:subfactor} is devoted to investigate the subfactors of cluster categories. In particular, Theorem~\ref{t:subfactor-cluster-category} and Theorem~\ref{t:subfactor-tame-type} are proved. We then apply results obtained in Section~\ref{S:subfactor} to prove Theorem~\ref{t:main-theorem-dimension-vector} in Section~\ref{S:dimension-vector}. Finally, we prove the weak denominator conjecture (Theorem~\ref{t:denominator-conjecture-tame-type}) for cluster algebras of tame type in Section~\ref{S:denominator}.

\subsection{Convention}
Let $k$ be an algebraically closed field. Denote by $D=\Hom_k(-,k)$ the
 duality over $k$.
For an object $M$ in a category $\mc$, denote by $|M|$ the number of non-isomorphic indecomposable direct summands of $M$ and by $\add M$ the subcategory of $\mc$ consisting of objects which are finite direct sum of direct summands of $M$. For $r\in \N$, we denote by $M^{\oplus r}$ the direct sum of $r$ copies of $M$.
 For a triangulated category $\mc$, we always denote by $\Sigma$ the suspension functor of $\mc$ and we refer to~\cite{Ha} for the basic properties of triangulated categories. For basic facts on representation theory of finite-dimensional hereditary algebras, we refer to~\cite{Ri84,ASS,SS,SS2}.

\noindent{\bf Acknowledgements.} This work was partially supported by the National Natural Science Foundation of China (No. 11471224). This work was done during the stay of the second-named author at the Department of Mathematics, University of Bielefeld, supported by the China Scholarship Council. S. Geng is deeply indebted to Professor Henning Krause for his kind hospitality and encouragement.

\section{Recollection on cluster-tilting theory}~\label{S:cluster-tilting-theorey}

\subsection{Cluster structure for $2$-Calabi-Yau categories}~\label{s:cluster-structure}
Let $\mc$ be a $2$-Calabi-Yau triangulated category with suspension functor $\Sigma$. In particular, for any $X,Y\in \mc$, we have the following bifunctorially isomorphism
\[\Hom_{\mc}(X,Y)\cong D\Hom_\mc(Y,\Sigma^2 X).
\]
An object $M\in \mc$ is {\it rigid} provided $\Hom_\mc(M,\Sigma M)=0$. A full subcategory $\mathcal{N}$ of $\mc$ is {\it rigid} if $\Hom_{\mc}(X,\Sigma Y)=0$ for any  $X,Y\in \mathcal{N}$.
Recall that a subcategory $\mathcal{N}$ is {\it contravariantly finite} in $\mc$, if any object $M\in \mc$ admits a right $\mathcal{N}$-approximation $f : N\to M$, which means that any map from $N'\in \mathcal{N}$ to $M$ factors through $f$. The left $\mathcal{N}$-approximation of $M$ and {\it covariantly finiteness} of $\mathcal{N}$ can be defined dually. A subcategory $\mathcal{N}$ is called {\it functorially finite} in $\mc$ if $\mathcal{N}$ is both covariantly finite and contravariantly finite in $\mc$.
A full functorially finite subcategory $\mathcal{N}$ of $\mc$ is a {\it cluster-tilting subcategory} if
\begin{itemize}
\item[$\bullet$] $\mathcal{N}$ is a rigid subcategory;
\item[$\bullet$] if an object $M\in \mc$ such that $\Hom_{\mc}(N, \Sigma M)=0$ for any $N\in \mathcal{N}$, then we have $M\in \mathcal{N}$.
\end{itemize}
An object $T\in \mc$ is a {\it cluster-tilting object} if the subcategory $\add T$ is a cluster-tilting subcategory of $\mc$. A typical class of $2$-Calabi-Yau triangulated categories with cluster-tilting objects is the cluster categories introduced in~\cite{BMRRT} ({\it cf.}~also  Section~\ref{ss:cluster-category}).

Let $\mc$ be a $2$-Calabi-Yau triangulated category with cluster-tilting objects. Let $T=\overline{T}\oplus T_k$ be a basic cluster-tilting object with indecomposable direct summand $T_k$. It has been shown in~\cite{IY} that there is a unique indecomposable object $T_k^*$ such that $T_k\not\cong T_k^*$ and $\mu_{T_k}(T):=\overline{T}\oplus T_k^*$ is a basic cluster-tilting object.
Moreover, $T_k^*$ is uniquely determined by the so-called {\it exchange triangles}
\[T_k\xrightarrow{f}B\xrightarrow{g}T_k^*\to \Sigma T_k~\text{and}~T_k^*\xrightarrow{f'}B'\xrightarrow{g'}T_k\to \Sigma T_k^*,
\]
 where $g, g'$ are minimal right $\add \overline{T}$-approximations. In this case, the cluster-tilting object $\mu_{T_k}(T)$ is called the {\it mutation of $T$ at the indecomposable direct summand $T_k$} and the pair $(T_k, T_k^*)$ is an {\it exchange pair}.

  For a given basic cluster-tilting object $T\in \mc$, denote by $Q_T$ the quiver of $T$. By definition, its vertices correspond to the indecomposable direct summands of $T$ and the arrows from the indecomposable direct summand $T_i$ to $T_j$ is given by the dimension of the space of irreducible maps $\rad(T_i, T_j)/\rad^2(T_i, T_j)$, where $\rad(-,-)$ is the radical of the category $\add T$. The category $\mc$ has {\it no loops nor $2$-cycles} provided that for each basic cluster-tilting object $T\in \mc$, its quiver $Q_T$ has no loops nor $2$-cycles.

Let $Q$ be a finite quiver with vertex set $Q_0=\{1,\cdots,n\}$. If $Q$ has no loops nor $2$-cycles, we define a skew-symmetric matrix $B(Q)=(b_{ij})\in M_n(\Z)$, where
\[b_{ij}=\begin{cases}0& i=j;\\ |\{\text{arrows~ $i\to j$}\}|-|\{\text{arrows $j\to i$}\}|& i\neq j.\end{cases}
\]
On the other hand, for a given skew-symmetric integer matrix $B$, one can construct a quiver $Q$ without loops nor $2$-cycles such that $B=B(Q)$.
For any $1\leq k\leq n$, the {\it Fomin-Zelevinsky's mutation}~\cite{FZ02} transforms the matrix $B(Q)$ to a new skew-symmetric matrix $\mu_k(B(Q))=(b_{ij}')\in M_n(\Z)$, where
\[b_{ij}'=\begin{cases}-b_{ij}&\text{$i=k$~ or ~$j=k$;}\\ b_{ij}+\frac{|b_{ik}|b_{kj}+b_{ik}|b_{kj}|}{2}& \text{else}.\end{cases}
\]

Let $\mc$ be a $2$-Calabi-Yau triangulated category with cluster-tilting objects. The category $\mc$ admits a {\it cluster structure}~\cite{BIRS} if the following conditions are satisfied:
\begin{itemize}
\item[(C1)] $\mc$ has no loops nor $2$-cycles;
\item[(C2)] For each basic cluster-tilting object $T$ with an indecomposable direct summand $T_k$, the matrix $B(Q_T)$ and $B(Q_{\mu_{T_k}(T)})$ are related by the Fomin-Zelevinsky's mutation rule.
\end{itemize}
It has been proved in~\cite{BIRS} that the condition ~(C1) implies (C2). Namely, we have
\begin{theorem}~\label{t:cluster-structure}
Let $\mc$ be a $2$-Calabi-Yau triangulated category with cluster-tilting objects. If $\mc$ has no loops nor $2$-cycles, then $\mc$ has a cluster structure.
\end{theorem}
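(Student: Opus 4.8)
\smallskip
\noindent\emph{Sketch of the argument (following~\cite{BIRS}).}
It suffices to fix one basic cluster-tilting object $T=\overline{T}\oplus T_k$ with indecomposable summand $T_k$, to set $T'=\mu_{T_k}(T)=\overline{T}\oplus T_k^*$, and to check that $B(Q_{T'})=\mu_k(B(Q_T))$. Index the indecomposable summands of $\overline{T}$ by $i\ne k$ and write $a_{ij}$ (resp.\ $a'_{ij}$) for the number of arrows $T_i\to T_j$ in $Q_T$ (resp.\ in $Q_{T'}$, with $T_k^*$ in the role of the vertex $k$), so that $b_{ij}=a_{ij}-a_{ji}$ and $b'_{ij}=a'_{ij}-a'_{ji}$. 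Since (C1) is assumed for \emph{every} basic cluster-tilting object, both $Q_T$ and $Q_{T'}$ have no loops and no $2$-cycles; in particular $a_{ii}=0$ and $\min(a_{ij},a_{ji})=0$ for all $i,j$, and likewise for the $a'_{ij}$. The plan is to read the arrow multiplicities off the two exchange triangles, first at the vertex $k$ and then between two vertices distinct from $k$, splitting the latter step into cases on the signs of $b_{ik}$ and $b_{kj}$ so as to match the Fomin--Zelevinsky formula term by term.

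For the arrows incident to $k$ I would use the exchange triangles
\[
T_k\xrightarrow{f}B\xrightarrow{g}T_k^*\to\Sigma T_k,\qquad T_k^*\xrightarrow{f'}B'\xrightarrow{g'}T_k\to\Sigma T_k^*,
\]
in which $f,f'$ are minimal left and $g,g'$ minimal right $\add\overline{T}$-approximations. Because $Q_T$ has no loop at $k$ one has $T_k\notin\add B$, so $B\cong\bigoplus_{i\ne k}T_i^{\oplus\alpha_i}$, and left-minimality of $f$ forces $\alpha_i=\dim_k\rad_{\add T}(T_k,T_i)/\rad_{\add T}^2(T_k,T_i)=a_{ki}$; reading the same decomposition through the right-minimal map $g$, now inside $\add T'$, gives $\alpha_i=a'_{ik}$, whence $a'_{ik}=a_{ki}$ for every $i\ne k$. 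The second exchange triangle gives dually $a'_{ki}=a_{ik}$, and therefore $b'_{ki}=a'_{ki}-a'_{ik}=a_{ik}-a_{ki}=-b_{ki}$, which is the Fomin--Zelevinsky rule at the vertex $k$.

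The substantial part is the case $i,j\ne k$, where one must verify
\[
a'_{ij}-a'_{ji}=(a_{ij}-a_{ji})+\tfrac12\bigl(|b_{ik}|b_{kj}+b_{ik}|b_{kj}|\bigr).
\]
By the no-$2$-cycle property of $Q_T$ the correction term equals $a_{ik}a_{kj}$ if $a_{ik},a_{kj}>0$, equals $-a_{ki}a_{jk}$ if $a_{ki},a_{jk}>0$, and vanishes otherwise; the symmetry interchanging $i$ and $j$ identifies the two nontrivial cases, so it is enough to show that, when $a_{ik}>0$ and $a_{kj}>0$, exactly $a_{ik}a_{kj}$ new arrows $T_i\to T_j$ appear in $Q_{T'}$ (after which no arrow $T_j\to T_i$ can survive, since $Q_{T'}$ has no $2$-cycle, and this is the cancellation already built into $\mu_k$). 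Such arrows should come from the $a_{ik}a_{kj}$ paths $T_i\to T_k\to T_j$ of $Q_T$: a radical map $T_i\to T_k$ factors through $g'$, a radical map $T_k\to T_j$ factors through $f$, and composing along the two exchange triangles associates to each pair of irreducible constituents a map $T_i\to T_j$ in $\add T'$. I expect the main obstacle to be proving that these maps are genuinely irreducible in $\add T'$ and that, together with the surviving old arrows, they exhaust $\rad_{\add T'}(T_i,T_j)/\rad_{\add T'}^2(T_i,T_j)$ --- a priori some of them could collapse into $\rad_{\add T'}^2$. I would settle this by a dimension count: applying $\Hom_\mc(T_i,-)$ and $\Hom_\mc(-,T_j)$ to the exchange triangles yields exact sequences that express $\dim_k\Hom_\mc(T_i,T_j)$, and similarly $\dim_k\Hom_\mc(T_i,T_k^*)$ and $\dim_k\Hom_\mc(T_k^*,T_j)$, in terms of the corresponding data over $T$; these identities leave only finitely many possibilities for the pair $(a'_{ij},a'_{ji})$, and the no-$2$-cycle hypothesis on $Q_{T'}$ forces one of its coordinates to vanish, which removes the ambiguity and pins the pair to the value dictated by $\mu_k$. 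Combined with the computation at the vertex $k$, this gives $B(Q_{T'})=\mu_k(B(Q_T))$, i.e.\ condition (C2).
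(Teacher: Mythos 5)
The paper does not actually prove this statement: it is quoted from \cite{BIRS} (Theorem I.1.6 there), so there is no in-paper argument to compare yours against. Judged on its own terms, your sketch has the right architecture and your computation at the vertex $k$ is essentially complete and correct: under the no-loop hypothesis the minimal left $\add\overline{T}$-approximation of $T_k$ computes the arrows out of $k$, the same middle term $B$ read through the right-minimal map $g$ computes the arrows into $k^*$, and the dual triangle gives $b'_{ki}=-b_{ki}$.

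The gap is in the case $i,j\neq k$, and the specific mechanism you propose for closing it would fail. You want to pin down $(a'_{ij},a'_{ji})$ by ``exact sequences that express $\dim_k\Hom_\mc(T_i,T_j)$ in terms of the corresponding data over $T$,'' but $\Hom_\mc(T_i,T_j)$ is a Hom-space in $\mc$ between two objects that are summands of both $T$ and $T'$; its dimension is manifestly independent of which cluster-tilting object you use, so no identity among Hom-space dimensions can detect the change in arrow multiplicities. What changes is the subspace $\rad^2$: the arrow count is $\dim_k\Hom_\mc(T_i,T_j)/\rad^2$, and $\rad^2_{\add T}(T_i,T_j)$ and $\rad^2_{\add T'}(T_i,T_j)$ differ exactly in whether one allows factorizations through $\add T_k$ or through $\add T_k^*$. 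The real content of the proof is to compare these two subspaces using the exchange triangles and the no-loop/no-$2$-cycle hypotheses, showing their dimensions differ by exactly $a_{ik}a_{kj}$ (resp.\ $a_{ki}a_{jk}$) in the two nontrivial sign configurations; your sketch names this as ``the main obstacle'' but the dimension count you offer does not engage with it. A second, related caveat: the long exact sequences obtained by applying $\Hom_\mc(T_i,-)$ or $\Hom_\mc(-,T_j)$ to the exchange triangles have connecting maps that need not vanish in general --- this is precisely the ``exchange compatibility'' phenomenon of \cite{BMR09} that Section~4 of this paper is devoted to --- so the clean additivity of dimensions you are implicitly relying on is not automatic and must be argued separately where it is used.
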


\subsection{Recollection on hereditary algebras}
We recall some facts about hereditary algebras following~\cite{Ri84, ASS,SS,SS2}. 

Let $Q$ be a finite acyclic quiver and $kQ$ the path algebra of $Q$ over $k$. We assume moreover that $Q$ is connected.
Denote by $\mod kQ$ the category of finitely generated left $kQ$-modules.
Let $\tau$ be the Auslander-Reiten translation of $\mod kQ$. An indecomposable $kQ$-module $M$ is {\it preprojective}  if $\tau^m M$ is projective for some $m\geq 0$; $M$ is  {\it preinjective} if $\tau^{-m}M$ is injective for some $m\geq 0$; $M$ is {\it regular} if $M$ is neither preprojective nor preinjective. The Auslander-Reiten (AR for short) quiver of $\mod kQ$ has been well-understood ({\it cf.}~\cite{Ri84,ASS, SS, SS2} for instance).
\begin{enumerate}
\item[(i)] If $Q$ is of Dynkin type, there are only finitely many indecomposable $kQ$-modules up to isomorphism and each indecomposable module is preprojective and preinjective. In particular, the AR quiver of $\mod kQ$ consists of a unique connected component;

\item[(ii)] If $Q$ is of extended Dynkin type, the AR quiver of $\mod kQ$ consists of :
\begin{itemize}
\item[$\circ$] the preprojective component, consisting exactly of the indecomposable preprojective modules;
\item[$\circ$] the preinjective component, consisting exactly of the indecomposable preinjective modules;
\item[$\circ$] a finite number of regular components called  non-homogeneous tubes;
\item[$\circ$] an infinite set of regular components called  homogeneous tubes;
\end{itemize}

\item[(iii)] If $Q$ is of wild type, the AR quiver of $\mod kQ$ consists of the preprojective component, the preinjective component and infinitely many connected regular components with shape $\Z\mathbb{A}_\infty$.
\end{enumerate}

Now assume that $Q$ is a connected extended Dynkin quiver. We turn to recall some facts on the regular components of $\mod kQ$.

For a tube $\mt$, we also denote by $\mt$ the full subcategory of $\mod kQ$ consisting of objects  which are finite direct sum of indecomposable objects lying in $\mt$. Notice that each tube $\mt$ is an abelian subcategory of $\mod kQ$ and  the Auslander-Reiten translation $\tau$ restricts to an autoequivalence of $\mt$. Moreover,  $\mt$ is standard, {\it i.e.} the subcategory of $\mt$ consisting of the indecomposable objects is equivalent to the mesh category of the AR quiver of $\mt$.
For a fixed tube $\mt$, there is a positive integer $m$, such that $\tau^m M=M$ for all indecomposable objects in $\ct$. The minimal such $m$ is the {\it rank} of $\mt$. If $m = 1$, then $\mt$ is said to be homogeneous.

Let $\mt$ be a tube of  rank  $d$ in $\mod kQ$. 
Indecomposable modules lying in the mouth of $\mt$ are {\it quasi-simple} modules. 
There are exactly $d$ non-isomorphic quasi-simple $kQ$-modules in~$\mt$, say $R_1,\cdots, R_d$, where $R_i=\tau^{-(i-1)}R_1$ for $1<i\leq d$.  Each indecomposable module of $\mt$ is an iterated extension of $R_1,\cdots, R_d$.
Moreover, for each $1\leq i\leq d$ and $l\in \N$, there is a unique indecomposable module $M_{(i,l)}$ in $\mt$ with socle $R_i$ and quasi-length $l$. These modules exhaust the indecomposable modules in $\mt$.
We may use the coordinate system to denote the indecomposable objects in $\mt$. Namely, denote by $(a,b)$ the unique indecomposable object in $\mt$ with socle $R_a$ and quasi-length $b$.  When the first argument does not belong to $[1,d]$, we will implicitly assume identification modulo $d$ (see { Figure 1.} for an example of a tube of rank $3$).
\begin{figure}
\[\xymatrix@R=0.25cm@C=0.25cm{&~~\vdots~~\ar[dr]&&~~\vdots~~\ar[dr]&&~~\vdots~~\ar[dr]&\\
(2,5)\ar[ur]\ar[dr]&&(3,5)\ar[ur]\ar[dr]&&(1,5)\ar[ur]\ar[dr]&&(2,5)\\
&(3,4)\ar[dr]\ar[ur]&&(1,4)\ar[dr]\ar[ur]]&&(2,4)\ar[ur]\ar[dr]&\\
(3,3)\ar[dr]\ar[ur]&&(1,3)\ar[dr]\ar[ur]&&(2,3)\ar[dr]\ar[ur]&&(3,3)\\
&(1,2)\ar[dr]\ar[ur]&&(2,2)\ar[dr]\ar[ur]&&(3,2)\ar[ur]\ar[dr]\\
(1,1)\ar[ur]&&(2,1)\ar[ur]&&(3,1)\ar[ur]&&(1,1)}
\]
\caption{The AR quiver of a tube $\mt$ of rank $3$. The quasi-simple modules are $(1,1), (2,1)$ and $(3,1)$.}
\end{figure}

For an indecomposable object $(a,b)\in \mt$, the infinite sequence of irreducible maps
\[\mathrm{R}_{(a,b)}=(a,b)\to (a, b+1)\to \cdots \to (a,b+j)\to \cdots
\]
is called a {\it ray} starting at $(a,b)$ and the infinite sequence of irreducible maps
\[\mathrm{C}_{(a,b)}=\cdots\to (a-j,b+j)\to \cdots\to (a-1,b+1)\to (a,b)
\]
is called a {\it coray} ending at $(a,b)$. We also write
\[\mathrm{R}_{(a,b)}=\{(a,b+i)~|~\forall i\geq 0\}~\text{and}~\mathrm{C}_{(a,b)}=\{(a-j,b+j)~|~\forall j\geq 0\}.
\]
The {\it wing} $\cw_{(a,b)}$ determined by $(a,b)$ is defined to be the set
\[\cw_{(a,b)}=\{(a', b')~|~a'\geq a~\text{and}~a'+b'\leq a+b\}.
\]
Recall that a $kQ$-module $M$ is {\it rigid} if $\Ext^1_{kQ}(M,M)=0$.
We end up this subsection with the following facts of $\mod kQ$ ({\it cf.}~\cite{Ri84,SS} for instance).
\begin{proposition}~\label{p:property-of-tame-quiver}
Let $Q$ be a connected extended Dynkin quiver.
\begin{itemize}
\item[(1)] The indecomposable rigid $kQ$-modules are precisely the preprojectives, the preinjectives and the indecomposable regular $kQ$-modules $M$ with quasi-length $q.l.(M)<t$ in a rank $t$ tube, where $t>1$;
\item[(2)] Let $M$ be a preprojective $kQ$-module,  $R$ a regular $kQ$-module and $L$ a preinjective $kQ$-module, then $\Hom_{kQ}(L,R)=\Hom_{kQ}(L,M)=\Hom_{kQ}(R,M)=0$. Moreover, there are no nonzero morphisms between different tubes;
\item[(3)] For any tilting $kQ$-module $T$, the number of non-isomorphic indecomposable regular direct summands of $T$ is at most $|kQ|-2$;
\item[(4)] For a tilting $kQ$-module $T$, if $T$ has a regular direct summand $M$, then T has an indecomposable regular direct summand $N$ which lies in the same tube as
 $M$ with $q.l.(N)=1$.
\end{itemize}
\end{proposition}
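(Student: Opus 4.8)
The plan is to deduce all four statements from the structure theory of $\mod kQ$ recalled above, together with the Auslander--Reiten formula $\Ext^1_{kQ}(X,Y)\cong D\Hom_{kQ}(Y,\tau X)$ for the hereditary algebra $kQ$.

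For part (1): if $M$ is indecomposable preprojective and not projective, then $M$ and $\tau M$ both lie in the (directed) preprojective component with $\tau M$ strictly preceding $M$, so $\Hom_{kQ}(M,\tau M)=0$ and hence $\Ext^1_{kQ}(M,M)=0$; the projective case is trivial and the preinjective case is dual. If $M=(a,b)$ is indecomposable regular in a tube $\mt$ of rank $t$, then $\tau M=(a-1,b)$, and $\Ext^1_{kQ}(M,M)$ is the $k$-dual of $\Hom_{kQ}\big((a,b),(a-1,b)\big)$, which by standardness of $\mt$ is the mesh-category morphism space from $(a,b)$ to $(a-1,b)$; reading off the coordinates, this space is nonzero exactly when $b\ge t$. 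Hence the indecomposable rigid regular modules are precisely those of quasi-length $<t$ in a tube of rank $t$, which forces $t>1$ (homogeneous tubes contain none). Combining the two cases gives (1). Part (2) is the classical directedness of the trisection $\mod kQ=\mathcal P\vee\mathcal R\vee\mathcal I$ into preprojective, regular and preinjective modules, together with the decomposition of the regular part $\mathcal R$, as an abelian category, into the coproduct of the tubes; I would simply recall this from~\cite{Ri84,SS}.

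For part (3): a tilting $kQ$-module $T$ has exactly $|kQ|$ non-isomorphic indecomposable summands. By (1) each regular summand lies in a non-homogeneous tube $\mt_j$ of rank $t_j$ with quasi-length $<t_j$; by (2) summands in distinct tubes are automatically $\Ext^1$-orthogonal, while the summands of $T$ lying in a fixed $\mt_j$ form a rigid family there. A rigid family inside a tube of rank $t$ has at most $t-1$ members: the maximal ones are tilting objects of a single wing $\cw_{(a,t-1)}$, a category equivalent to the module category of a linearly oriented $A_{t-1}$, over which a rigid object has at most $t-1$ summands. Summing over the tubes, $T$ has at most $\sum_j(t_j-1)$ regular summands, and $\sum_j(t_j-1)=|kQ|-2$ for a connected extended Dynkin quiver (the standard numerical identity for tame type); this proves (3), and in particular $T$ has at least two non-regular summands.

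Part (4) is the delicate point and the step I expect to be the main obstacle. Suppose $T$ has a nonzero regular part $\bigoplus_i M_i$ in a tube $\mt$ of rank $t$; the goal is that some $M_i$ has quasi-length $1$. Rigidity of $\{M_i\}$ inside $\mt$ does not suffice (for instance $\{(1,2)\}$ is rigid in a rank $3$ tube), so the argument must use that $T$ is tilting for the whole algebra, which controls how $\bigoplus_i M_i$ interacts with the preprojective and preinjective summands of $T$. A first step: $\operatorname{Gen}T$, the torsion class of the tilting module $T$ in $\mod kQ$, meets $\mt$ in a nonzero \emph{proper} torsion class of $\mt$ --- it is closed under quotients and extensions taken inside $\mt$, it contains each $M_i$, and it is proper because $\Ext^1_{kQ}(T,\tau M_1)$ has $D\End_{kQ}(\tau M_1)\ne 0$ as a direct summand, so $\tau M_1\notin\operatorname{Gen}T$ --- whence $\mt$ contains a quasi-simple $R_c\notin\operatorname{Gen}T$, while the quasi-top of any $M_i$ is a quasi-simple lying in $\operatorname{Gen}T$. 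From $R_c\notin\operatorname{Gen}T$ one gets $\Hom_{kQ}(R_c,\tau T)\ne 0$, which by part (2) yields a nonzero morphism from $R_c$ either to the $\tau$-translate of some $M_i$ or to the preinjective part of $\tau T$; combined with the exchange sequences of $T$ at a summand $M_i$ of minimal quasi-length and the non-crossing (nested-or-disjoint-wings) picture of $\{M_i\}$, one aims to conclude that this minimal quasi-length is $1$. The genuinely hard part, which I do not expect to be routine, is excluding the possibility that these morphisms all land in the preinjective part --- that is, keeping control of the directed summands of $T$; if the direct combinatorial argument becomes too cumbersome I would fall back on the classification of tilting modules over tame hereditary algebras (Ringel's branch/tubular description~\cite{Ri84}) or on an induction on $t$ via the perpendicular category of a quasi-simple of $\mt$.
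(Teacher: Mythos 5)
The paper offers no proof of this proposition: it is stated purely as a recollection of standard facts, with the citation ``(\emph{cf.}~\cite{Ri84,SS} for instance)'', so there is no in-paper argument to measure yours against. Your arguments for (1)--(3) are correct and are the standard ones: the Auslander--Reiten formula in the directed components and in the standard tubes for (1) (your mesh count showing $\Hom_{kQ}\bigl((a,b),(a-1,b)\bigr)\neq 0$ exactly when $b\geq t$ is right), the classical trisection and the coproduct decomposition of the regular part for (2), and for (3) the two standard inputs that a rigid object in a rank-$t$ tube sits inside a wing $\cw_{(a,t-1)}\simeq \mod kA_{t-1}$ (hence has at most $t-1$ summands) and that $\sum_j(t_j-1)=|kQ|-2$ for the tubular types of the extended Dynkin quivers.

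Part (4) is where your proposal has a genuine gap, which to your credit you flag yourself. The torsion-class considerations correctly produce a quasi-simple $R_c$ of $\mt$ with $\Hom_{kQ}(R_c,\tau T)\neq 0$, but nothing in your sketch excludes the possibility that all such morphisms land in the preinjective part of $\tau T$, and the announced combination with exchange sequences and the non-crossing wing configuration is not actually carried out. Statement (4) is essentially equivalent to the branch structure of the regular part of a tilting module over a tame hereditary algebra: the summands of $T$ lying in a maximal wing $\cw_M$ form a \emph{tilting} (not merely rigid) object of $\cw_M\simeq\mod kA_{\ell}$, and every tilting module over a linearly oriented $A_{\ell}$ contains a simple. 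The first of these two facts is a theorem (Ringel's classification of tilting modules of tame type) rather than a routine verification, and it is exactly the point your sketch leaves open. Your proposed fallback --- invoking that classification from \cite{Ri84} or \cite{SS} --- is legitimate and is in effect what the paper itself does for the entire proposition; but as a self-contained argument, your proof of (4) is incomplete.
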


\subsection{Cluster categories}~\label{ss:cluster-category} We follow~\cite{BMRRT}.
Let $Q$ be a finite acyclic quiver (may be non-connected) and $kQ$ the path algebra of $Q$ over $k$. Denote by $\der^b(\mod kQ)$ the bounded derived category of $\mod kQ$ with suspension functor $\Sigma$. Let $\tau$ be the Auslander-Reiten translation of $\der^b(\mod kQ)$, which is  an autoequivalence of $\der^b(\mod kQ)$. The {\it cluster category $\mc_Q$} associated to $Q$ is the orbit category $\der^b(\mod kQ)/\tau^{-1}\circ \Sigma$ which was introduced in~\cite{BMRRT} in order to give a categorical framework of Fomin-Zelevinsky's cluster algebras ({\it cf.}~\cite{CCS06} for $A_n$ case). Namely, the cluster category $\mc_Q$ has the same objects as $\der^b(\mod kQ)$ and the morphism spaces
\[\Hom_{\mc_Q}(X,Y)=\bigoplus_{i\in \Z}\Hom_{\der^b(\mod kQ)}(X,(\tau^{-1}\circ \Sigma)^iY)~\text{for ~$X,Y\in \der^b(\mod kQ)$}.
\]
It has been proved by Keller~\cite{K} that $\mc_Q$ admits a canonical triangle structure such that the projection $\pi_Q:\der^b(\mod kQ)\to \mc_Q=\der^b(\mod kQ)/\tau^{-1}\circ \Sigma$ is a triangle functor.
Moreover, $\mc_Q$ is a Calabi-Yau triangulated category of dimension $2$. Note that we have $\tau=\Sigma$ in $\mc_Q$.

For any finite acyclic quiver $Q'$ such that $kQ'$ is derived equivalent to $kQ$, we clearly have an equivalence of triangulated categories $\mc_Q\cong \mc_{Q'}$. Via this equivalence, we may regard $kQ'$-modules as objects of $\mc_Q$ and $\ind (\mod kQ'\vee~\Sigma (kQ'))$ forms a representative set of indecomposable objects of $\mc_Q$, where the set $\ind (\mod kQ'\vee~\Sigma (kQ'))$ consists of the indecomposable $kQ'$-modules together with the objects $\Sigma P$, where $P$ runs over indecomposable projective $kQ'$-modules.

The following result is useful to compute morphism spaces in $\mc_Q$, which will be used frequently ({\it cf.}~\cite{BMRRT}).
\begin{lemma}~\label{l:compute-morphism}
For any $kQ$-modules $X, Y$, we have
\[\Hom_{\mc_Q}(X,Y)=\Hom_{\der^b(\mod kQ)}(X,Y)\oplus \Hom_{\der^b(\mod kQ)}(X, \tau^{-1}\circ\Sigma Y).
\]
Moreover, if $X$ is a projective $kQ$-module or $Y$ is an injective $kQ$-module, then
\[\Hom_{\mc_Q}(X,Y)=\Hom_{\der^b(\mod kQ)}(X,Y).
\]
If both $X$ and $Y$ are regular $kQ$-modules, then
\[\Hom_{\mc_Q}(X,Y)=\Hom_{\der^b(\mod kQ)}(X,Y)\oplus D\Hom_{\der^b(\mod kQ)}(Y, \tau^2 X).
\]
\end{lemma}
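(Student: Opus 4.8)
The plan is to evaluate the infinite direct sum $\Hom_{\mc_Q}(X,Y)=\bigoplus_{i\in\Z}\Hom_{\der^b(\mod kQ)}(X,F^iY)$ with $F=\tau^{-1}\circ\Sigma$, and to show that only the indices $i=0$ and $i=1$ can contribute when $X$ and $Y$ are $kQ$-modules. Two facts about $\der^b(\mod kQ)$ carry the argument. First, since $kQ$ is hereditary, every object $Z$ splits (non-canonically) as $\bigoplus_n H^n(Z)[-n]$, and for $kQ$-modules $M,N$ one has $\Hom_{\der^b(\mod kQ)}(M,N[n])=\Ext^n_{kQ}(M,N)$, which equals $\Hom_{kQ}(M,N)$ for $n=0$, equals $\Ext^1_{kQ}(M,N)$ for $n=1$, and vanishes otherwise; consequently, for a module $X$,
\[\Hom_{\der^b(\mod kQ)}(X,Z)\cong\Hom_{kQ}(X,H^0(Z))\oplus\Ext^1_{kQ}(X,H^{-1}(Z)).\]
Second, the Auslander--Reiten translation of $\der^b(\mod kQ)$ is $\tau=\nu\circ\Sigma^{-1}$, where $\nu$ is the Serre (derived Nakayama) functor; in particular $\tau$ sends an indecomposable non-projective module to a module in cohomological degree $0$ and sends an indecomposable projective $P$ to $(\nu P)[-1]$ (concentrated in cohomological degree $1$), while $\tau^{-1}$ sends an indecomposable non-injective module to a module in degree $0$ and sends an indecomposable injective $I$ to $(\nu^{-1}I)[1]$ (concentrated in degree $-1$).

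First I would determine the cohomological support of $F^iY=(\tau^{-i}Y)[i]$ for a module $Y$. Using the second fact and induction on $|i|$, the complex $\tau^{-i}Y$ has cohomology concentrated in degrees $[-i,0]$ when $i\geq 0$ and in degrees $[0,-i]$ when $i\leq 0$; hence $F^iY$ is supported in $[-2i,-i]$ when $i\geq 0$ and in $[-i,-2i]$ when $i\leq 0$. In particular, if $i\geq 2$ then $F^iY$ lives in degrees $\leq -2$, and if $i\leq -1$ then $F^iY$ lives in degrees $\geq 1$; in either case $H^0(F^iY)=0=H^{-1}(F^iY)$, so $\Hom_{\der^b(\mod kQ)}(X,F^iY)=0$ by the displayed formula. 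This leaves exactly
\[\Hom_{\mc_Q}(X,Y)=\Hom_{\der^b(\mod kQ)}(X,Y)\oplus\Hom_{\der^b(\mod kQ)}(X,\tau^{-1}\Sigma Y),\]
with the first summand equal to $\Hom_{kQ}(X,Y)$; this is the first assertion.

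It remains to treat the extra summand $\Hom_{\der^b(\mod kQ)}(X,\tau^{-1}\Sigma Y)$ in the three listed cases, where one uses that $\tau^{-1}\Sigma Y=(\tau^{-1}Y)[1]$ is supported in degrees $[-2,-1]$. If $X$ is projective, then $\Ext^1_{kQ}(X,-)=0$ and $H^0(\tau^{-1}\Sigma Y)=0$, so this summand vanishes. If $Y$ is injective, then $\tau^{-1}\Sigma Y=(\nu^{-1}Y)[2]$ is concentrated in degree $-2$, so again the summand vanishes. If $X$ and $Y$ are both regular, then $\tau^{-1}Y$ is again a regular module, so $\tau^{-1}\Sigma Y=(\tau^{-1}Y)[1]$ is a module placed in degree $-1$, and Serre duality $\Hom_{\der^b(\mod kQ)}(A,B)\cong D\Hom_{\der^b(\mod kQ)}(B,\nu A)$ with $\nu=\tau\circ\Sigma$ yields
\begin{align*}
\Hom_{\der^b(\mod kQ)}(X,\tau^{-1}\Sigma Y)&\cong D\Hom_{\der^b(\mod kQ)}(\tau^{-1}\Sigma Y,\tau\Sigma X)\\
&=D\Hom_{\der^b(\mod kQ)}(\tau^{-1}Y,\tau X)\cong D\Hom_{\der^b(\mod kQ)}(Y,\tau^2X),
\end{align*}
the last isomorphism obtained by applying the autoequivalence $\tau$; this gives the third assertion (all modules occurring are regular, so the derived Homs coincide with module Homs). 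The only delicate point is the degree bookkeeping in the first step: it rests entirely on the $\pm1$ shifts that $\tau^{\pm1}$ introduces on projectives and injectives, and once these are in hand the rest is formal. Alternatively one could bypass the induction by invoking the known description of a fundamental domain for the $F$-action---namely the $kQ$-modules together with the shifted indecomposable projectives, as recalled just above---and verifying the vanishing of $\Hom_{\der^b(\mod kQ)}(X,F^iY)$ for $i\notin\{0,1\}$ directly on that domain.
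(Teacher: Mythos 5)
Your proof is correct. The paper does not actually prove this lemma --- it is recalled from \cite{BMRRT} without argument --- so there is no in-paper proof to compare against; your computation (evaluating the orbit-category sum $\bigoplus_{i}\Hom_{\der^b(\mod kQ)}(X,F^iY)$, using the splitting of complexes over a hereditary algebra and the degree shifts that $\tau^{\pm1}$ introduces on projectives and injectives to kill all terms with $i\notin\{0,1\}$, then Serre duality for the regular case) is exactly the standard derivation from \cite{BMRRT}, and all the bookkeeping checks out.
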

Let $\ind^{\operatorname{rig}}\mc_Q$ be the set of isoclasses of indecomposable rigid objects in $\mc_Q$.
An easy consequence of Lemma~\ref{l:compute-morphism} is that the set $\ind^{\operatorname{rig}}\mc_Q$ consists of indecomposable rigid $kQ$-modules together with the objects $\Sigma P$, where $P$ runs over indecomposable projective $kQ$-modules.

The structure of the AR quiver of $\der^b(\mod kQ)$ is well-understood via the AR quiver of $\mod kQ$ ({\it cf.}~\cite{Ha}). By the fact that the projection $\pi_Q:\der^b(\mod kQ)\to \mc_Q$ preserves the Auslander-Reiten triangles, we deduce the AR quiver of $\mc_Q$ from the AR quiver of $\der^b(\mod kQ)$. Namely, assume moreover that $Q$ is connected, 
\begin{itemize}
\item[$\circ$] if $Q$ is a Dynkin quiver, then the AR quiver of $\mc_Q$ admits a unique connected component with finitely many indecomposable objects;
\item[$\circ$] if $Q$ is an extended Dynkin quiver, then the AR quiver of $\mc_Q$ consists of a connected AR component with shape $\mathbb{Z}Q$ (the translation quiver of $Q$), infinitely many homogeneous tubes  and finitely many non-homogeneous tubes;
\item[$\circ$] if $Q$ is of wild type, then the AR quiver of $\mc_Q$ consists of a connected AR component with shape $\mathbb{Z}Q$ and infinitely many connected AR components with shape $\mathbb{Z}A_{\infty}$.
\end{itemize}

Using the AR quiver of a cluster category, we introduce the following intrinsic
definition of regular and transjective objects in cluster categories.
\begin{definition}
Let $\mc$ be a cluster category. For any indecomposable object $M\in \mc$, denote by  $\Gamma_M$ the connected AR component of $\mc$ containing $M$. An indecomposable object $M\in \mc$ is called {\it regular}, if there is an indecomposable object $N$ lying in the AR component $\Gamma_M$ such that $\dim_k\Hom_\mc(N,N)\geq 2$. Otherwise, $M$ is called {\it transjective}.
\end{definition}
We also call an object $M\in \mc$ {\it transjective} (resp. {\it regular}), if $M$ has no indecomposable regular (resp. {\it transjective}) direct summand.
It is easy to see that our definition coincides with the one in~\cite{BM10}. Indeed, if $\mc=\mc_Q$ for a finite acyclic quiver $Q$, then the indecomposable regular objects of $\mc_Q$ are precisely the indecomposable regular $kQ$-modules.

A cluster category $\mc_Q$ is of {\it finite type} if there are only finitely many indecomposable objects. In other words, $Q$ is a disjoint union of Dynkin quivers. Using Auslander-Reiten theory of a cluster category,  we obtain the following simple observation.
\begin{lemma}~\label{l:AR-shape}
Let $\mc$ be a cluster category.
\begin{itemize}
\item[(1)] $\mc$ is of finite type if and only if $\mc$ has only finitely many indecomposable rigid objects;
\item[(2)]  Assume that $\mc$ admits a connected AR component $\Gamma_{\circ}$ with finitely many indecomposable objects. Let $\mc_\circ$ be the full subcategory of $\mc$ consisting of objects which are finite direct sum of indecomposable objects lying in $\Gamma_\circ$. Then $\mc_\circ$ is a cluster category of finite type. Moreover, there is triangulated subcategory $\mc_{\bullet}$ of $\mc$ such that $\mc$ is the direct sum of $\mc_\circ$ with $\mc_\bullet$.
\end{itemize}
\end{lemma}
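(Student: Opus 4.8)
The plan is to prove the two parts separately, with part (1) being the easier of the two. For part (1), one direction is immediate: if $\mc$ is of finite type, then it has only finitely many indecomposable objects, hence only finitely many indecomposable rigid objects. For the converse, I would argue by contraposition using the trichotomy of AR components recalled in Section~\ref{ss:cluster-category}. If $\mc$ is not of finite type, then $Q$ (or one of its connected components, after passing to a derived-equivalent acyclic quiver) is extended Dynkin or wild. In either case, the AR quiver of $\mc$ contains infinitely many homogeneous (or $\Z A_\infty$) tubes. In a homogeneous tube, the quasi-simple module at the mouth has quasi-length $1 < $ (rank of the tube, which is $1$)\dots\ wait — more carefully: by Proposition~\ref{p:property-of-tame-quiver}(1), an indecomposable regular module in a tube of rank $t$ is rigid iff its quasi-length is $< t$, so in a rank-$1$ (homogeneous) tube no module is rigid. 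Thus I instead use the transjective component: the transjective component has shape $\Z Q$ with $Q$ not Dynkin, so it contains infinitely many indecomposable rigid objects (e.g.\ the preprojective $kQ'$-modules, each of which is rigid by Proposition~\ref{p:property-of-tame-quiver}(1)). Hence $\mc$ has infinitely many indecomposable rigid objects. This proves (1).

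For part (2), suppose $\mc$ has a connected AR component $\Gamma_\circ$ with only finitely many indecomposable objects. The key structural input is the trichotomy of AR-component shapes: a connected AR component of a cluster category is either of shape $\Z Q'$ for a Dynkin quiver $Q'$ (finitely many vertices), or of shape $\Z Q'$ for $Q'$ extended Dynkin or wild (infinitely many vertices), or a tube (infinitely many vertices). So a \emph{finite} component must be of the first kind, i.e.\ $\Gamma_\circ \cong \Z Q'$ with $Q'$ Dynkin. Correspondingly, passing to a suitable derived equivalence, $kQ$ decomposes as a product in which one factor $kQ_\circ'$ is the path algebra of a union of Dynkin quivers realizing $\Gamma_\circ$, and $\mc_\circ = \mc_{Q_\circ'}$ is a cluster category of finite type (of Dynkin type). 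Concretely: since $\Gamma_\circ$ is a finite standard component, it is the AR quiver of a representation-finite hereditary algebra, and $\mc_\circ$ is the associated cluster category.

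The remaining — and main — point is to produce the triangulated subcategory $\mc_\bullet$ with $\mc = \mc_\circ \oplus \mc_\bullet$ in the sense defined just before the lemma. Here I would take $\mc_\bullet$ to be the full subcategory on all indecomposable objects \emph{not} lying in $\Gamma_\circ$; equivalently, $\mc_\bullet = \mc_{Q_\bullet'}$ where $Q_\bullet'$ is the complement of $Q_\circ'$ in the acyclic quiver $Q'$ derived-equivalent to $Q$. Since $\mc_{Q'} \cong \mc_Q$ and $\mc_{Q'} = \mc_{Q_\circ' \sqcup Q_\bullet'}$, the object decomposition $M \cong M_\circ \oplus M_\bullet$ is clear (each indecomposable summand lies in exactly one of the two subcategories, as $\Gamma_\circ$ is a union of whole connected AR components), and both $\mc_\circ$ and $\mc_\bullet$ are triangulated subcategories because the cluster category of a disjoint union of quivers is the "product" of the cluster categories of the pieces — AR triangles stay within a component, so each subcategory is closed under $\Sigma$ and under cones. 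The Hom-orthogonality $\Hom_\mc(\mc_\circ, \mc_\bullet) = 0 = \Hom_\mc(\mc_\bullet, \mc_\circ)$ is where a small argument is needed: it suffices to check it on indecomposables $X \in \Gamma_\circ$, $Y \notin \Gamma_\circ$. This follows from the product description of $\mc_{Q_\circ' \sqcup Q_\bullet'}$ (no arrows between the two parts, hence no morphisms in $\der^b$, hence none in the orbit category). The main obstacle is thus the initial identification of $\Gamma_\circ$ with the AR quiver of a representation-finite hereditary algebra so that the clean "disjoint union" picture applies; once that is in place, all the verifications (object decomposition, closure under triangles, Hom-vanishing) are routine and follow from Lemma~\ref{l:compute-morphism} together with Proposition~\ref{p:property-of-tame-quiver}(2).
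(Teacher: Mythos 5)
Your argument is correct. The paper in fact gives no proof of this lemma at all (it is introduced only as ``a simple observation'' from Auslander--Reiten theory), and your route --- reducing to the classification of AR components recalled in Section~\ref{ss:cluster-category}, identifying a finite connected component with the cluster category of a Dynkin connected component of $Q$, and using the decomposition $\mc_{Q_\circ\sqcup Q_\bullet}=\mc_{Q_\circ}\oplus\mc_{Q_\bullet}$ together with the rigidity of preprojectives for part (1) --- is exactly the intended one. The only loose phrase is ``AR triangles stay within a component, so each subcategory is closed under cones'': closure under cones really comes from the Hom-orthogonality of the two summands (any cone of a morphism in $\mc_\circ$ has its $\mc_\bullet$-part split off as a summand of a shift of an object of $\mc_\circ$, hence vanishes), but this is a matter of wording, not a gap.
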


\subsection{Cluster-tilted algebras}
 The cluster-tilting objects of $\mc_Q$ have close relationship with the tilting theory of $kQ$. Among others, the following result has been proved in~\cite{BMRRT}.
\begin{lemma}~\label{l:cluster-vs-tilting}
\begin{itemize}
\item[(a)] An object $T\in \mc_Q$ is a cluster-tilting object if and only if there is a finite-dimensional hereditary algebra $kQ'$ which is derived equivalent to $kQ$ such that $T$ is identified to a tilting $kQ'$-module via the equivalence $\mc_Q\cong \mc_{Q'}$;
\item[(b)] Any two basic cluster-tilting objects of $\mc_Q$ can be obtained from each other by a series of mutations.
\end{itemize}
\end{lemma}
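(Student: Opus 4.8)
The plan is to derive both statements from the dictionary between cluster-tilting objects of $\mc_Q$ and tilting modules over hereditary algebras derived equivalent to $kQ$, using the morphism formula of Lemma~\ref{l:compute-morphism} and the $2$-Calabi-Yau property of $\mc_Q$; I would prove (a) first and use it for (b). For the ``if'' direction of (a), let $kQ'$ be hereditary and derived equivalent to $kQ$ and let $T$ be a basic tilting $kQ'$-module, regarded as an object of $\mc_{Q'}\cong\mc_Q$. Functorial finiteness of $\add T$ is automatic, since $T$ has finitely many indecomposable summands and the Hom-spaces of $\mc_{Q'}$ are finite dimensional. For rigidity, a direct computation with the orbit category description gives $\Hom_{\mc_{Q'}}(T,\Sigma T)\cong\Ext^1_{kQ'}(T,T)\oplus D\Ext^1_{kQ'}(T,T)$, which vanishes since $T$ is a rigid module. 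For the maximality condition, I would take an indecomposable $M$ with $\Hom_{\mc_{Q'}}(T,\Sigma M)=0$. It cannot be of the form $\Sigma P$ with $P$ an indecomposable projective $kQ'$-module, since then $\Hom_{\mc_{Q'}}(T,\Sigma^2 P)$ contains $\Hom_{kQ'}(T,\nu P)\ne0$, because every indecomposable injective lies in $\operatorname{Gen}T$ (the subcategory of modules generated by $T$). So $M$ is an indecomposable $kQ'$-module, and the morphism formula together with the $2$-Calabi-Yau property gives $\Ext^1_{kQ'}(T,M)=0=\Ext^1_{kQ'}(M,T)$; since $kQ'$ is hereditary, this forces $M$ to be an Ext-projective object of $\operatorname{Gen}T$, and by the classical description of the Ext-projectives of the torsion class of a tilting module, $M\in\add T$. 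Hence $\add T$ is a cluster-tilting subcategory.

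For the ``only if'' direction of (a), let $T$ be a basic cluster-tilting object of $\mc_Q$. By the description of $\ind^{\operatorname{rig}}\mc_Q$, each indecomposable summand of $T$ is either a $kQ$-module or a shift $\Sigma P$ of an indecomposable projective. Using the triangle equivalences $\mc_Q\cong\mc_{Q'}$ induced by reflection functors one absorbs the shifted-projective summands one at a time and reduces to the case where $T$ is a module over some hereditary algebra $kQ'$ derived equivalent to $kQ$; as in the ``if'' direction, $\Hom_{\mc_{Q'}}(T,\Sigma T)=0$ then forces $\Ext^1_{kQ'}(T,T)=0$, so $T$ is a rigid $kQ'$-module, and having exactly $|Q_0|$ indecomposable summands it is a tilting module (complete it to a tilting module by Bongartz and compare the number of summands). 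This uses the invariance of the number of indecomposable summands of a cluster-tilting object, which one establishes by reducing a rigid object to a module plus shifts of projectives over a suitable derived-equivalent hereditary algebra and applying the classical bound on the number of summands of a rigid module over a hereditary algebra.

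Part (b) would then follow by combining (a) with two inputs: first, that over a fixed hereditary algebra the exchange graph of basic tilting modules is connected, and that each tilting exchange whose almost-complete tilting module has two complements is realized by a mutation of the corresponding cluster-tilting objects in $\mc_Q$; secondly, that a tilting exchange at an almost-complete tilting module with a unique complement corresponds, under cluster mutation, to passing from the realization of a cluster-tilting object as a module over one hereditary algebra to its realization over a reflected one. Keeping track of these ``boundary'' mutations and of the reflection functors, one shows that every basic cluster-tilting object of $\mc_Q$ is linked by a chain of mutations to $kQ$ itself, and hence any two are linked.

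The hard part, both in the ``only if'' direction of (a) and in (b), is the combinatorial control of the shifted-projective summands: one must track precisely how the set of summands of the form $\Sigma P$ transforms under reflection functors and under mutation at such a summand, and handle the almost-complete tilting modules admitting a unique complement, which is exactly the mechanism gluing the module realizations over the various derived-equivalent hereditary algebras. A secondary point to pin down is the invariance of the number of indecomposable summands of a cluster-tilting object, which may also be read off from the cluster structure of Theorem~\ref{t:cluster-structure}.
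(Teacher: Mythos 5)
The paper offers no proof of this lemma: it is stated as a recollection and attributed to \cite{BMRRT}, so there is nothing internal to compare against. Your sketch is essentially a reconstruction of the argument in that reference, and it is sound: the computation $\Hom_{\mc_{Q'}}(T,\Sigma T)\cong\Ext^1_{kQ'}(T,T)\oplus D\Ext^1_{kQ'}(T,T)$, the exclusion of $M=\Sigma P$ via $\Sigma^2P\cong\nu P$, and the identification of the Ext-projectives of $\operatorname{Gen}T$ are all correct. Two remarks. First, in the ``only if'' direction you do not need to establish $|T|=|Q_0|$ beforehand: once $T$ is realized as a rigid $kQ'$-module, Bongartz gives a tilting module $T\oplus X$, which is rigid in $\mc_{Q'}$ by your ``if''-direction computation, so $\Hom_{\mc_{Q'}}(T,\Sigma X)=0$ and the maximality of the cluster-tilting object $T$ forces $X\in\add T$; this closes the argument without any counting. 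Second, the two places you flag as ``the hard part'' are indeed where nontrivial external input enters: the reduction of the $\Sigma P$-summands uses that $\Hom_{\mc_Q}(T,\Sigma^2P_i)\cong D\Hom_{\mc_Q}(P_i,T)=0$, so the module part of $T$ has no composition factor at the corresponding vertices and the reflections can be performed, and the connectivity statement in (b) rests on the Happel--Unger connectivity of the tilting graph of a hereditary algebra together with the fact that in $\mc_Q$ every almost complete cluster-tilting object has exactly two complements (which repairs the degenerate exchanges). With those inputs made explicit, the proposal is a faithful account of the proof the paper is citing.
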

As an immediate consequence, we know that all the basic cluster-tilting objects of $\mc_Q$ have exactly $|kQ|$ indecomposable direct summands.

Let $T$ be a basic cluster-tilting object in ~$\mc_Q$. The endomorphism algebra $\End_{\mc_Q}(T)^{\op}$ is called a {\it cluster-tilted algebra}~\cite{BMR07}.
By definition, it is clear that the hereditary algebra $kQ$ is a cluster-tilted algebra. The following results have been established in~\cite{BMR07,BMR08}.
\begin{theorem}~\label{t:cluster-tilted-algebras}
Let $T$ be a basic cluster-tilting object in~$\mc_Q$ and $\Gamma=\End_{\mc_Q}(T)^{\op}$ the associated cluster-tilted algebra.
\begin{itemize}
\item[(a)] The functor $\Hom_{\mc_Q}(T,-):\mc_Q\to \mod \Gamma$ induces an equivalence \[\mc_Q/\add \Sigma T\xrightarrow{\sim} \mod \Gamma,\] where $\mc_Q/\add \Sigma T$ is the additive quotient of $\mc_Q$ by the subcategory $\add \Sigma T$;
\item[(b)] Let $e$ be an idempotent of $\Gamma$. The quotient algebra $\Gamma/\Gamma e\Gamma$ is again a cluster-tilted algebra;
\item[(c)] The quiver $Q_\Gamma$ of $\Gamma$ has no loops nor $2$-cycles.
\end{itemize}
\end{theorem}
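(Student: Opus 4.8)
The plan is to establish the three statements in turn; (a) is the substantive one, and (b), (c) build on it. For (a), since $T$ is rigid we have $\Hom_{\mc_Q}(T,\Sigma T)=0$, so $F:=\Hom_{\mc_Q}(T,-)$ annihilates $\add\Sigma T$ and induces an additive functor $\bar F\colon\mc_Q/\add\Sigma T\to\mod\Gamma$; it suffices to show $\bar F$ is dense and fully faithful. The key point is that, because $\add T$ is a cluster-tilting subcategory, every $X\in\mc_Q$ sits in a triangle $T_1\xrightarrow{\beta}T_0\xrightarrow{\alpha}X\xrightarrow{\gamma}\Sigma T_1$ with $T_0,T_1\in\add T$: take $\alpha$ a minimal right $\add T$-approximation, complete to a triangle, and use the long exact sequence obtained by applying $F$ (together with $\Hom_{\mc_Q}(T,\Sigma T_0)=0$ and surjectivity of $\alpha_*$) to see that $\Hom_{\mc_Q}(T,\Sigma T_1)=0$, whence $T_1\in\add T$ by the defining property of a cluster-tilting subcategory. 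Applying $F$ to such a triangle gives an exact sequence $FT_1\to FT_0\to FX\to 0$, a projective presentation of $FX$. For density, every finitely generated $\Gamma$-module $M$ has a projective presentation $P_1\to P_0\to M\to 0$; since $F$ restricts to an equivalence $\add T\xrightarrow{\sim}\proj\Gamma$, the map $P_1\to P_0$ lifts to a morphism $\beta\colon T_1\to T_0$ whose cone $X$ satisfies $FX\cong M$. For full faithfulness, using the triangle for a given $X$ together with the identifications $\Hom_\Gamma(FT_j,FY)\cong\Hom_{\mc_Q}(T_j,Y)$ (coming from $e_jN\cong\Hom_{\mc_Q}(T_j,Y)$ for $N=\Hom_{\mc_Q}(T,Y)$), one obtains $\Hom_\Gamma(FX,FY)\cong\{g\colon T_0\to Y\mid g\beta=0\}$, which by the triangle is $\Hom_{\mc_Q}(X,Y)$ modulo the maps factoring through $\gamma$; and one checks that for $h\colon X\to Y$ one has $Fh=0$ if and only if $h\alpha=0$, if and only if $h$ factors through $\gamma$, if and only if $h$ factors through $\add\Sigma T$, so that $\bar F$ is bijective on morphism spaces. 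The main care needed here is this last chain of equivalences, i.e.\ making sure the kernel of $F$ is exactly the ideal $[\add\Sigma T]$.

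For (b), replacing $e$ by the sum of the distinct primitive idempotents occurring in it (which does not change the ideal $\Gamma e\Gamma$), we may assume $e=\id_U$ for a direct summand $U$ of $T$; write $T=U\oplus T'$. Then $\Gamma e\Gamma$ is precisely the ideal of endomorphisms of $T$ in $\mc_Q$ that factor through $\add U$, so $\Gamma/\Gamma e\Gamma$ is identified with the endomorphism algebra $\End_{\mc'}(T')^{\op}$ of $T'$ in the Iyama--Yoshino subfactor $\mc':={}^\perp(\add\Sigma U)/\add U$ (note $T'\in{}^\perp(\add\Sigma U)$ because $T$ is rigid). By~\cite{IY} the category $\mc'$ is again $2$-Calabi--Yau with cluster-tilting objects, and $T'$ is one of them since $U\in\add T$ and $T$ is cluster-tilting in $\mc_Q$; by Theorem~\ref{t:subfactor-cluster-category}, $\mc'$ is itself a cluster category, hence $\End_{\mc'}(T')^{\op}$, and thus $\Gamma/\Gamma e\Gamma$, is a cluster-tilted algebra. (This can also be done by the original, more hands-on argument of~\cite{BMR08}, realising the perpendicular subcategory of $U$ directly.)

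For (c), the absence of loops comes out of the exchange triangles. Mutating at an indecomposable summand $T_i$ with complement $\overline T=T/T_i$, take the exchange triangle $T_i^*\xrightarrow{f'}B'\xrightarrow{g'}T_i\to\Sigma T_i^*$ with $g'$ a minimal right $\add\overline T$-approximation, so $B'\in\add\overline T$; applying $F$ and using $2$-Calabi--Yau duality and the rigidity of $\overline T\oplus T_i^*$ one finds $\cok(Fg')\cong F\Sigma T_i^*\cong D\Hom_{\mc_Q}(T_i^*,\Sigma T_i)$, which is supported only at the vertex $i$ and is one-dimensional (as for any exchange pair), hence equals $S_i$; thus $\rad P_i=\im(Fg')$ is a quotient of $FB'\in\add(\bigoplus_{j\ne i}P_j)$, so $\rad P_i/\rad^2P_i$ has no composition factor $S_i$ and there is no loop at $i$. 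The same analysis, with $Fg'$ a projective cover of $\rad P_i$, identifies the number of arrows $i\to j$ in $Q_\Gamma$ with the multiplicity of $T_j$ in $B'$. The remaining assertion, no $2$-cycles, is the genuine technical obstacle: one must exclude that $T_j$ occurs in the exchange middle term at $T_i$ \emph{and} $T_i$ occurs in the exchange middle term at $T_j$. I would attack this by comparing the two exchange triangles --- whose complements each contain the other summand --- and extracting a contradiction from rigidity and the $2$-Calabi--Yau property; a complete argument is given in~\cite{BMR07}. Alternatively, by Lemma~\ref{l:cluster-vs-tilting}(a) one may realise $T$ as a tilting module over a hereditary algebra derived equivalent to $kQ$ and deduce the statement from the well-known absence of loops and $2$-cycles in the quiver of a tilted algebra together with the description of its relation-extension.
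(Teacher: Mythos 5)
The paper does not actually prove this theorem: it is quoted verbatim from the literature ("The following results have been established in~\cite{BMR07,BMR08}"), so there is no in-paper argument to compare yours against. Judged on its own terms, your sketch of (a) is the standard and correct argument (approximation triangle, $T_1\in\add T$ via the second defining condition of a cluster-tilting subcategory, projective presentations, and the identification of $\ker F$ with the ideal of maps factoring through $\add\Sigma T$). Your no-loop argument in (c) is also correct, granted the fact that $\dim_k\Hom_{\mc_Q}(T_i^*,\Sigma T_i)=1$ for an exchange pair; note that this is not free in a general $2$-Calabi--Yau category and in $\mc_Q$ is extracted from the hereditary realisation of Lemma~\ref{l:cluster-vs-tilting}(a) together with the complement theory for almost complete tilting modules, so it deserves at least a sentence.

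Two genuine problems remain. First, your primary route for (b) is circular in the context of this paper: you invoke Theorem~\ref{t:subfactor-cluster-category} to conclude that ${}^\perp(\add\Sigma U)/\add U$ is a cluster category, but the paper's proof of Theorem~\ref{t:subfactor-cluster-category} uses Theorem~\ref{t:cluster-tilted-algebras}(b) and (c) --- it identifies $\End_{\mc'}(T')^{op}$ with $\Gamma/\Gamma e\Gamma$, uses that this quotient is a cluster-tilted algebra to locate an acyclic quiver in the mutation class, and only then applies Keller--Reiten recognition. Iyama--Yoshino alone gives that $\mc'$ is $2$-Calabi--Yau with cluster-tilting objects, which does not yet make $\End_{\mc'}(T')^{op}$ a cluster-tilted algebra. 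So for (b) you must fall back on the original argument of \cite{BMR07,BMR08} (realising $T'$ as a tilting module over a hereditary algebra of smaller rank), which you mention only parenthetically. Second, the no-$2$-cycle half of (c) --- the part of the statement that actually requires work --- is not proved but deferred: "I would attack this by comparing the two exchange triangles \dots a complete argument is given in~\cite{BMR07}" is a citation, not a proof. Since the paper itself cites \cite{BMR07,BMR08} for the whole theorem, deferring is defensible, but you should be explicit that (b) and half of (c) in your write-up rest on the literature rather than on the arguments you have supplied.
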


Combining Theorem~\ref{t:cluster-structure} with Theorem~\ref{t:cluster-tilted-algebras}~$(c)$, we conclude that the cluster category $\mc_Q$ admits a cluster structure.
In the following, for each object $M\in \mc_Q$ without indecomposable direct summands in $\add\Sigma T$, we will denote its image in $\mod \Gamma$ by $M_\Gamma$ and each $\Gamma$-module is given of the form $M_\Gamma$ for some object $M\in \mc_Q$ without indecomposable direct summands in $\add\Sigma T$  by Theorem~\ref{t:cluster-tilted-algebras}~$(a)$.

Let $\Lambda$ be a finite-dimensional algebra over $k$ and $\mod \Lambda$ the category of finitely generated left $\Lambda$-modules. Denote by $\tau$ the Auslander-Reiten translation of $\mod \Lambda$. Recall that a module $M\in \mod \Lambda$ is {\it $\tau$-rigid} if $\Hom_\Lambda(M,\tau M)=0$. For a cluster-tilted algebra $\Gamma$, the following bijection has been established in~\cite{AIR}.
\begin{lemma}~\label{l:bijection-tau-rigid-and-rigid}
Let $T$ be a basic cluster-tilting object of $\mc_Q$ and $\Gamma=\End_{\mc_Q}(T)^{\op}$ the associated cluster-tilted algebra. The functor $\Hom_{\mc_Q}(T,-)$ induces a bijection between the isoclasses of indecomposable rigid objects of $\mc_Q\backslash\add\Sigma T$ and the isoclasses of indecomposable $\tau$-rigid $\Gamma$-modules.
\end{lemma}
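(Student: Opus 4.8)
The plan is to reduce the statement to a matching of rigidity in $\mc_Q$ with $\tau$-rigidity over $\Gamma$, the bijection on the level of objects being essentially free. Indeed, by Theorem~\ref{t:cluster-tilted-algebras}(a) the functor $\Hom_{\mc_Q}(T,-)$ induces an equivalence $\mc_Q/\add\Sigma T\xrightarrow{\sim}\mod\Gamma$, and since $\add\Sigma T$ is an additive subcategory closed under direct summands, an indecomposable $M\in\mc_Q$ with $M\notin\add\Sigma T$ stays indecomposable in the quotient, so $M_\Gamma:=\Hom_{\mc_Q}(T,M)$ is an indecomposable $\Gamma$-module and every indecomposable $\Gamma$-module arises uniquely this way. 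Hence $\Hom_{\mc_Q}(T,-)$ already restricts to a bijection between isoclasses of indecomposable objects of $\mc_Q\setminus\add\Sigma T$ and isoclasses of indecomposable $\Gamma$-modules, and it remains to show that for such an $M$ one has: $M$ rigid $\iff M_\Gamma$ $\tau$-rigid.

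The first thing I would do is translate $\Ext^1_\Gamma$ and $\tau_\Gamma$ into $\mc_Q$. Since $\add T$ is a cluster-tilting subcategory, $M$ fits into a triangle $T_1\xrightarrow{f}T_0\xrightarrow{g}M\xrightarrow{h}\Sigma T_1$ with $T_0,T_1\in\add T$ and $g$ a minimal right $\add T$-approximation; applying $\Hom_{\mc_Q}(T,-)$ and using $\Hom_{\mc_Q}(T,\Sigma T_1)=0$ gives a projective presentation $\Hom_{\mc_Q}(T,T_1)\to\Hom_{\mc_Q}(T,T_0)\to M_\Gamma\to 0$. Feeding this into the definitions, together with the identifications $\Hom_\Gamma(\Hom_{\mc_Q}(T,T_i),\Hom_{\mc_Q}(T,N))\cong\Hom_{\mc_Q}(T_i,N)$ and $\nu\Hom_{\mc_Q}(T,T_i)\cong D\Hom_{\mc_Q}(T_i,T)\cong\Hom_{\mc_Q}(T,\Sigma^2 T_i)$ (where $\nu$ is the Nakayama functor and the last isomorphism is the $2$-Calabi--Yau property), and the long exact sequences coming from the triangle and its shifts, one obtains: (i) $\Ext^1_\Gamma(M_\Gamma,M_\Gamma)$ is naturally identified with the subgroup of $\Hom_{\mc_Q}(M,\Sigma M)$ of maps factoring through $\add\Sigma T$ (here one uses that $h\colon M\to\Sigma T_1$ is a left $\add\Sigma T$-approximation, which follows from the rigidity of $\add T$); and (ii) $\tau_\Gamma M_\Gamma=D\operatorname{Tr}M_\Gamma$ is a subquotient of $\Hom_{\mc_Q}(T,\Sigma M)\cong D\Hom_{\mc_Q}(M,\Sigma T)$. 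In particular, if $M$ is rigid then $\Hom_{\mc_Q}(M,\Sigma M)=0$, so $\Ext^1_\Gamma(M_\Gamma,M_\Gamma)=0$, and the Auslander--Reiten formula $\Ext^1_\Gamma(M_\Gamma,M_\Gamma)\cong D\overline{\Hom}_\Gamma(M_\Gamma,\tau_\Gamma M_\Gamma)$ gives $\overline{\Hom}_\Gamma(M_\Gamma,\tau_\Gamma M_\Gamma)=0$.

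This is not yet enough: one must still bridge the gap between $\overline{\Hom}_\Gamma$ and $\Hom_\Gamma$, i.e.\ rule out nonzero maps $M_\Gamma\to\tau_\Gamma M_\Gamma$ through a projective, and one must prove the converse. I would handle both at once through $\tau$-tilting completion and the cluster structure. By Theorem~\ref{t:cluster-structure} (applicable since $Q_\Gamma$ has no loops nor $2$-cycles, Theorem~\ref{t:cluster-tilted-algebras}(c)), $\mc_Q$ carries a cluster structure, so its basic cluster-tilting objects form a connected mutation graph (Lemma~\ref{l:cluster-vs-tilting}(b)), the mutations governed by the exchange triangles $T_k\to B\to T_k^*\to\Sigma T_k$ of Section~\ref{s:cluster-structure}; the basic support $\tau$-tilting $\Gamma$-modules likewise form a connected mutation graph. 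The functor $\Hom_{\mc_Q}(T,-)$ matches these: a basic cluster-tilting object $U=\overline U\oplus U'$, with $U'$ the maximal summand lying in $\add\Sigma T$, is sent to $\Hom_{\mc_Q}(T,\overline U)$, which I claim is a basic support $\tau$-tilting module; the free module $\Gamma$ corresponds to $T$ (chosen via Lemma~\ref{l:cluster-vs-tilting}(a) as a tilting $kQ'$-module, so $\overline T=T$); and each exchange triangle is sent to the exact sequence realizing the corresponding $\tau$-tilting mutation. Since both graphs are connected, one gets a bijection under which summands correspond to summands. Now every indecomposable rigid object of $\mc_Q$ outside $\add\Sigma T$ is a summand of a basic cluster-tilting object $U$: by the description of $\ind^{\operatorname{rig}}\mc_Q$ (a consequence of Lemma~\ref{l:compute-morphism}) it is either $\Sigma P$ for an indecomposable projective $P$ — a summand of $\Sigma(kQ')$, which is cluster-tilting because $\Sigma$ is an autoequivalence — or an indecomposable rigid $kQ'$-module, which completes to a tilting module by Bongartz's lemma; then $M$ is a summand of $\overline U$, so $M_\Gamma$ is a summand of the support $\tau$-tilting module $\Hom_{\mc_Q}(T,\overline U)$, hence $\tau$-rigid. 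Conversely, an indecomposable $\tau$-rigid $\Gamma$-module is a summand of a basic $\tau$-tilting module (the $\tau$-tilting analogue of Bongartz's lemma), which corresponds to a basic cluster-tilting object $U$; tracing back, $M$ is a summand of $U$ and hence rigid.

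The main obstacle is precisely the discrepancy between $\Ext^1_\Gamma$-rigidity and $\tau$-rigidity: because $\mc_Q$ is $2$-Calabi--Yau, $\Ext^1_\Gamma(M_\Gamma,M_\Gamma)$ only detects the part of $\Hom_{\mc_Q}(M,\Sigma M)$ coming from maps through $\add\Sigma T$, so the vanishing of $\Ext^1_\Gamma$ is strictly weaker than rigidity of $M$, and the full equivalence genuinely requires the extra leverage of $\tau$-rigidity — equivalently, the global matching of the third paragraph. Consequently the technical heart of the argument is the compatibility of the exchange triangles of Section~\ref{s:cluster-structure} with $\tau$-tilting mutation, together with the connectedness of the support-$\tau$-tilting mutation graph of a cluster-tilted algebra; making the homological identifications of the second paragraph genuinely functorial, so that the subquotient description of $\tau_\Gamma M_\Gamma$ is usable, is the remaining bookkeeping.
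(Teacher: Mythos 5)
The paper offers no proof of this lemma at all: it is quoted verbatim from Adachi--Iyama--Reiten \cite{AIR} (their Theorem 4.1 for $2$-Calabi--Yau categories with cluster-tilting objects), so there is nothing in-paper to compare your argument against and I can only judge it on its own terms. Your first paragraph (reduction, via Theorem~\ref{t:cluster-tilted-algebras}(a), to the equivalence ``$M$ rigid $\iff$ $M_\Gamma$ $\tau$-rigid'' for indecomposables) is correct, and your diagnosis that $\Ext^1_\Gamma$-vanishing is strictly weaker than rigidity of $M$ is the right thing to worry about.

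The third paragraph, however, is where all the content sits, and it has a genuine gap. You assume that the basic support $\tau$-tilting $\Gamma$-modules form a connected mutation graph. For a $\tau$-tilting \emph{infinite} algebra --- and a tame cluster-tilted algebra is one, since it has infinitely many indecomposable $\tau$-rigid modules --- this is not a known general fact; connectedness of the support $\tau$-tilting exchange graph is exactly the kind of statement one usually \emph{deduces} from the bijection with cluster-tilting objects, so invoking it here is circular. Without it, your graph-matching only shows that every support $\tau$-tilting module \emph{reachable from $\Gamma$ by mutation} comes from a cluster-tilting object, which does not yield the implication ``$\tau$-rigid $\Rightarrow$ rigid''. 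Moreover, the two steps you yourself flag as the technical heart --- that $\Hom_{\mc_Q}(T,\overline U)$ is a support $\tau$-tilting module for every basic cluster-tilting $U$, and that exchange triangles are sent to $\tau$-tilting mutation sequences --- are asserted rather than proved, and the first already contains the implication ``rigid $\Rightarrow$ $\tau$-rigid'' for summands of cluster-tilting objects; the difficulty has been relocated, not resolved. The standard, non-circular route of \cite{AIR} avoids exchange graphs entirely: starting from the triangle $T_1\to T_0\to M\to\Sigma T_1$, one computes $\Hom_\Gamma(M_\Gamma,\tau M_\Gamma)$ directly, identifies it (up to $k$-duality) with an explicit subquotient of $\Hom_{\mc_Q}(M,\Sigma M)$, and shows the two spaces vanish simultaneously; the correspondence between cluster-tilting objects and support $\tau$-tilting modules, and the compatibility with mutation, then come out as corollaries rather than inputs.
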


As a consequence of  Theorem~\ref{t:cluster-tilted-algebras}~$(a)$ and Lemma~\ref{l:bijection-tau-rigid-and-rigid}, we have
\begin{corollary}
Let $\Gamma=\End_{\mc_Q}(T)^{\op}$ and $\Gamma'=\End_{\mc_Q}(T')^{\op}$ be two cluster-tilted algebras.  Let $M\in \mc_Q\backslash \add \Sigma T$ be an indecomposable object such that $M_{\Gamma}$ is $\tau$-rigid.
If $M\not\in \add\Sigma T'$, then $M_{\Gamma'}$ is also an indecomposable $\tau$-rigid $\Gamma'$-module.
\end{corollary}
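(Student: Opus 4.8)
The plan is to reduce the statement to the intrinsic characterization of $M$ as an indecomposable rigid object of $\mc_Q$, and then to apply Lemma~\ref{l:bijection-tau-rigid-and-rigid} twice: once for $T$ and once for $T'$.

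First I would feed the hypothesis into Theorem~\ref{t:cluster-tilted-algebras}(a). Since $M$ is indecomposable and $M\notin\add\Sigma T$, the equivalence $\mc_Q/\add\Sigma T\xrightarrow{\sim}\mod\Gamma$ identifies $M$ with the indecomposable $\Gamma$-module $M_\Gamma$. By assumption $M_\Gamma$ is $\tau$-rigid, hence it is an indecomposable $\tau$-rigid $\Gamma$-module, and Lemma~\ref{l:bijection-tau-rigid-and-rigid}, read in the reverse direction, shows that $M$ is an indecomposable rigid object of $\mc_Q\backslash\add\Sigma T$. The crucial (and completely elementary) observation is that the condition ``$M$ is indecomposable and $\Hom_{\mc_Q}(M,\Sigma M)=0$'' is intrinsic to the triangulated category $\mc_Q$ and makes no reference to any chosen cluster-tilting object; so we simply know that $M$ is an indecomposable rigid object of $\mc_Q$.

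Finally, because $M\notin\add\Sigma T'$ by hypothesis, $M$ lies in $\mc_Q\backslash\add\Sigma T'$, and I would run the same two ingredients through the cluster-tilted algebra $\Gamma'=\End_{\mc_Q}(T')^{\op}$: Theorem~\ref{t:cluster-tilted-algebras}(a) applied to $T'$ shows that $M_{\Gamma'}=\Hom_{\mc_Q}(T',M)$ is an indecomposable $\Gamma'$-module, and Lemma~\ref{l:bijection-tau-rigid-and-rigid} applied to $T'$, now in the forward direction, shows that the indecomposable rigid object $M$ corresponds under $\Hom_{\mc_Q}(T',-)$ to an indecomposable $\tau$-rigid $\Gamma'$-module, which is exactly $M_{\Gamma'}$. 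There is no genuine obstacle in this argument; the only point demanding a little care is that Lemma~\ref{l:bijection-tau-rigid-and-rigid} is phrased as a bijection whose domain is the class of indecomposable rigid objects lying outside $\add\Sigma T$ (respectively $\add\Sigma T'$), so one must check that $M$ belongs to both domains — and the two hypotheses $M\notin\add\Sigma T$ and $M\notin\add\Sigma T'$ are precisely what guarantees this.
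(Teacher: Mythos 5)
Your proof is correct and follows exactly the route the paper intends: the corollary is stated there without proof as an immediate consequence of Theorem~\ref{t:cluster-tilted-algebras}(a) and Lemma~\ref{l:bijection-tau-rigid-and-rigid}, and your argument simply makes explicit the key point that rigidity of $M$ is an intrinsic property of the object in $\mc_Q$, independent of the chosen cluster-tilting object.
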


\subsection{Cluster categories of tame type}
 The cluster category $\mc_Q$ is
 of {\it tame type} if $Q$ is a connected extended Dynkin quiver. In this case, a cluster-tilted algebra of $\mc_Q$ is also called a {\it cluster-tilted algebra of tame type}.

Now assume that $Q$ is a connected extended Dynkin quiver and $\mc_Q$ the associated cluster category of tame type. The projection functor $\pi_Q$ yields a bijection between the indecomposable regular $kQ$-modules and the indecomposable regular objects of $\mc_Q$. Moreover, each indecomposable regular object of $\mc_Q$ lies in a tube of $\mc_Q$.
 For an indecomposable regular object $M\in \mc_Q$, we denote by $\mt_M$ the tube where $M$ lies in. We may define its quasi-length $q.l.(M)$ to be the quasi-length of the corresponding regular $kQ$-module. The ray $\mathrm{R}_{M}$, the coray $\mathrm{C}_M$ and the wing $\cw_M$ determined by $M$ in $\mt_M$ can be defined similarly. For a fixed tube $\mt$ of $\mc_Q$, we still 
 use the coordinate system to denote the indecomposable objects in $\mt$.
 If $\mt$ is a tube of rank $d$ in $\mc_Q$, then the functor $\Sigma^d$ restricts to the identity on $\mt$ and $\Sigma (a, b)=\tau (a,b)=(a-1, b)$ for any $(a, b)\in \mt$. Notice that the tube $\mt$ of $\mc_Q$ is no longer standard. Nevertheless, the morphism space between any two indecomposable regular objects can be computed easily using Lemma~\ref{l:compute-morphism}.

Let $\mt$ be a tube of $\mc_Q$. For any indecomposable objects $M,N\in \mt$, we have 
\[\Hom_{\mc_Q}(M,N)=\Hom_{\der^b(\mod kQ)}(M,N)\oplus D\Hom_{\der^b(\mod kQ)}(M,\tau^{-1}\Sigma N).
\]
Following~\cite{BMV}, morphisms in $\Hom_{\der^b(\mod kQ)}(M,N)$ are called {\it $\mathcal{M}$-maps} from $M$ to $N$ and morphisms in $\Hom_{\der^b(\mod kQ)}(M,\tau^{-1}\circ \Sigma N)$ are called {\it $\der$-maps} from X to Y . Each morphism from $M$ to $N$ in $\mc_Q$ can be written as the sum of an $\mathcal{M}$-map with a $\der$-map. The composition of two $\mathcal{M}$-maps is also a $\mathcal{M}$-map, the composition of an $\mathcal{M}$-map with a $\der$-map is a $\der$-map,  the composition of two $\der$-maps is zero, and no $\mathcal{M}$-map can factor through a $\der$-map.

The following lemma summarizes certain facts about cluster categories of tame type.
 \begin{lemma}~\label{l:property-tame-clustercategory}
 Let $\mc_Q$ be a cluster category of tame type. 
 \begin{itemize}
 \item[(1)] An indecomposable object $M\in \mc_Q$ is rigid if and only if $M$ is transjective or $q.l.(M)\leq d-1$ if $M$ lies in a tube of rank $d>1$;
 \item[(2)] There are no nonzero morphisms between different tubes in $\mc_Q$;
 \item[(3)] Let $T$ be a basic cluster-tilting object of $\mc_Q$. If $T$ has an indecomposable regular direct summand $M$, then $T$ has an indecomposable regular direct summand $N$ which lies in the tube $\ct_M$ with $q.l.(N)=1$;
 \item[(4)] Let $\mt$ be a tube of $\mc_Q$ with rank $d>1$.  Let $M$ be an indecomposable object of $\mt$ with $q.l.(M)=1$ and $N$ an indecomposable rigid object lying in the ray $\mathrm{R}_M$ or the coray $\mathrm{C}_{\tau^2M}$. Then $\dim_k\Hom_{\mc_Q}(M,N)\leq 2$ and the equality holds if and only if  $q.l.(N)=d-1$.
 \end{itemize}
 \end{lemma}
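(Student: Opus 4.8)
The plan is to reduce all four assertions to the (well-understood) representation theory of the hereditary algebra $kQ$, using the comparison tools assembled above. Throughout I use that the projection $\pi_Q$ identifies the indecomposable regular $kQ$-modules with the indecomposable regular objects of $\mc_Q$, carrying tubes to tubes and preserving quasi-lengths (so that lying on a given ray or coray is an intrinsic notion matching up on both sides), and that $\ind\mc_Q$ is the disjoint union of the indecomposable preprojective $kQ$-modules, the indecomposable preinjective $kQ$-modules, the indecomposable regular $kQ$-modules, and the objects $\Sigma P$ with $P$ an indecomposable projective --- of which the transjective ones are exactly the first, second and fourth families. Granting this, (1) is immediate: the description of $\ind^{\operatorname{rig}}\mc_Q$ recorded after Lemma~\ref{l:compute-morphism}, together with Proposition~\ref{p:property-of-tame-quiver}(1), says that the indecomposable rigid objects of $\mc_Q$ are precisely all preprojectives, all preinjectives, all $\Sigma P$, and the regular modules of quasi-length $<t$ in a tube of rank $t>1$; comparing with the above list of transjective indecomposables (and putting $d=t$) gives exactly the characterization in (1).

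For (2), let $X$ and $Y$ be indecomposable objects lying in distinct tubes of $\mc_Q$; they are regular $kQ$-modules lying in distinct tubes of $\mod kQ$, and $\tau^2X$ lies in the same tube as $X$. Lemma~\ref{l:compute-morphism} gives $\Hom_{\mc_Q}(X,Y)=\Hom_{kQ}(X,Y)\oplus D\Hom_{kQ}(Y,\tau^2X)$, and both summands vanish by Proposition~\ref{p:property-of-tame-quiver}(2). For (3), by Lemma~\ref{l:cluster-vs-tilting}(a) I replace $Q$ by a derived-equivalent connected extended Dynkin quiver $Q'$ so that the cluster-tilting object $T$ becomes a tilting $kQ'$-module; an indecomposable regular direct summand $M$ of $T$ (regular as an object of $\mc_Q=\mc_{Q'}$) is then a regular $kQ'$-module, so Proposition~\ref{p:property-of-tame-quiver}(4) yields an indecomposable regular direct summand $N$ of $T$ lying in the same $kQ'$-tube as $M$ with $q.l.(N)=1$; transported back to $\mc_Q$ this is the required summand on $\ct_M$.

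Part (4) carries the actual content. By Lemma~\ref{l:compute-morphism} (equivalently, by splitting $\Hom_{\mc_Q}(M,N)$ into its $\mathcal M$-map and $\der$-map parts), for indecomposable regular objects one has
\[\dim_k\Hom_{\mc_Q}(M,N)=\dim_k\Hom_{kQ}(M,N)+\dim_k\Hom_{kQ}(N,\tau^2M),\]
so, since a tube of $\mc_Q$ is not standard, I would carry out the computation in the standard tube of $\mod kQ$, where these spaces are read off the mesh category: for a module $L$ of quasi-length $<d$ in a tube of rank $d$, $\dim_k\Hom_{kQ}(R_i,L)$ is $1$ if $R_i=\operatorname{soc}L$ and $0$ otherwise, and $\dim_k\Hom_{kQ}(L,R_i)$ is $1$ if $R_i=\operatorname{top}L$ and $0$ otherwise. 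Write $M=(a,1)$, so $M$ is quasi-simple and $\tau^2M=(a-2,1)$, and recall $q.l.(N)\le d-1$ by (1). If $N=(a,b)\in\mathrm{R}_M$, then $\operatorname{soc}N=R_a$ and $\operatorname{top}N=R_{a+b-1}$, so $\dim_k\Hom_{kQ}(M,N)=1$ while $\dim_k\Hom_{kQ}(N,\tau^2M)$ is $1$ if $a+b-1\equiv a-2\pmod d$ (i.e. $b=d-1$) and $0$ otherwise. If $N=(a-2-j,\,1+j)\in\mathrm{C}_{\tau^2M}$ with $0\le j\le d-2$, then $\operatorname{top}N=R_{a-2}$ and $\operatorname{soc}N=R_{a-2-j}$, so $\dim_k\Hom_{kQ}(N,\tau^2M)=1$ always, while $\dim_k\Hom_{kQ}(M,N)$ is $1$ if $a-2-j\equiv a\pmod d$ (i.e. $j=d-2$, i.e. $q.l.(N)=d-1$, in which case $N=(a,d-1)$ lies on $\mathrm{R}_M$ too, so the two cases agree) and $0$ otherwise. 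Summing, $\dim_k\Hom_{\mc_Q}(M,N)\in\{1,2\}$, with equality to $2$ precisely when $q.l.(N)=d-1$, which is (4).

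The step I expect to be the main obstacle is (4): once the additivity $\dim_k\Hom_{\mc_Q}(M,N)=\dim_k\Hom_{kQ}(M,N)+\dim_k\Hom_{kQ}(N,\tau^2M)$ is in place, what remains is a finite check, but it must be performed in the \emph{standard} tube of $\mod kQ$ rather than in $\mc_Q$, and its success hinges on the fact --- coming from (1) --- that rigidity of $N$ forces $q.l.(N)\le d-1$, which is exactly what keeps all the relevant morphism spaces at most one-dimensional. Parts (1)--(3) are essentially bookkeeping; the only care needed is the consistent use of the identification of regular objects of $\mc_Q$ with regular $kQ$-modules and the check that every transjective indecomposable of $\mc_Q$ lies in one of the three manifestly rigid families.
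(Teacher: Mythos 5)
Your proposal is correct and follows essentially the same route as the paper's proof: parts (1)--(3) are reduced to Proposition~\ref{p:property-of-tame-quiver} together with Lemmas~\ref{l:compute-morphism} and~\ref{l:cluster-vs-tilting} exactly as in the text (your appeal to Proposition~\ref{p:property-of-tame-quiver}(2) for part (2) is the intended reference), and part (4) is the same standard-tube computation via $\Hom_{\mc_Q}(M,N)=\Hom_{kQ}(M,N)\oplus D\Hom_{kQ}(N,\tau^2M)$, merely carried out for a general quasi-simple $(a,1)$ instead of normalizing to $(1,1)$. No gaps.
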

 \begin{proof}
 Part $(1)$ is a direct consequence of Proposition~\ref{p:property-of-tame-quiver} (1) and Lemma~\ref{l:compute-morphism}. Part $(2)$ follows from Proposition~\ref{p:property-of-tame-quiver} (3) and Lemma~\ref{l:compute-morphism}.
 To deduce part $(3)$, we use Proposition~\ref{p:property-of-tame-quiver} (4) and Lemma~\ref{l:cluster-vs-tilting} (a). 
 
 For $(4)$, without loss of generality, we may assume that $M=(1,1)$. Note that $N$ is rigid which implies that $q.l.(N)\leq d-1$ by (1).  We have $N=(1,i)$ or $N=(i,d-i)$ for some $1\leq i\leq d-1$.  By Lemma~\ref{l:compute-morphism}, $\Hom_{\mc_Q}(M,N)=\Hom_{kQ}(M,N)\oplus D\Hom_{kQ}(N,\tau^2M)$. Since each tube of $\mod kQ$ is standard, we  have
 \[\Hom_{kQ}(M,N)=k~\text{if and only if}~N=(1,i)
 \]
 \[\Hom_{kQ}(N,\tau^2M)=k~\text{if and only if}~N=(i,d-i)
 \]
 for $1\leq i\leq d-1$. Consequently, $\dim_k\Hom_{\mc_Q}(M,N)\leq 2$ and the equality holds if and only if $N=(1,d-1)$.
 \end{proof}

 Recall that $\ind^{\operatorname{rig}}\mc_Q$ is the set of isoclasses of indecomposable rigid objects of $\mc_Q$.
 \begin{lemma}~\label{l:hom-vanish-set}
  Let $\mc_Q$ be a cluster category of tame type and $M$  an indecomposable transjective object. For any nonnegative integer $l$, both the sets
  \[\{N\in \ind^{\operatorname{rig}} \mc_Q~|~\dim_k\Hom_{\mc_Q}(M,N)\leq l\}~\text{and}~\{L\in \ind^{\operatorname{rig}} \mc_Q~|~\dim_k\Hom_{\mc_Q}(L,M)\leq l\}
  \]
  are finite.
 \end{lemma}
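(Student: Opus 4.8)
The plan is to reduce the statement to the known structure of the Auslander--Reiten quiver of $\mc_Q$, using that $M$ is transjective and that the transjective component has shape $\Z Q$. First I would invoke Lemma~\ref{l:compute-morphism} to translate $\Hom_{\mc_Q}$-computations into $\Hom_{\der^b(\mod kQ)}$-computations; choosing a hereditary algebra $kQ'$ derived equivalent to $kQ$ (via Lemma~\ref{l:cluster-vs-tilting}), we may assume $M$ is (up to shift) a $kQ'$-module, in fact — since $M$ is transjective, hence preprojective or preinjective in some such realization — we may assume $M$ is an indecomposable preprojective or preinjective $kQ'$-module, or of the form $\Sigma P$ with $P$ projective. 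By Proposition~\ref{p:property-of-tame-quiver}~(2), morphisms between $M$ and any regular object vanish in one direction (there are no maps from a preinjective to a regular, nor from a regular to a preprojective), so the $\der$-summand in Lemma~\ref{l:compute-morphism} governs those; in any case, for $N$ regular, $\Hom_{\mc_Q}(M,N)$ and $\Hom_{\mc_Q}(L,M)$ are computed purely inside $\der^b(\mod kQ')$ as morphisms between the transjective $M$ and a regular object, possibly twisted by $\tau$. Thus the regular objects $N$ (resp.\ $L$) with small $\Hom$-dimension to (resp.\ from) $M$ are controlled by the supports of the functors $\Hom_{\der^b(\mod kQ')}(M,-)$ and $\Hom_{\der^b(\mod kQ')}(-,\tau^{\pm}M)$ restricted to the (finitely many) tubes.

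The key step is then the following finiteness statement for a fixed transjective $M$: among \emph{all} indecomposable regular objects $N$, only finitely many satisfy $\Hom_{\mc_Q}(M,N)=0$, and likewise only finitely many satisfy $\Hom_{\mc_Q}(N,M)=0$ — and, more precisely, for each fixed $l$ the set of indecomposable regular $N$ with $\dim_k \Hom_{\mc_Q}(M,N)\le l$ is finite, with the analogous statement on the other side. This is where the tame structure is used: in each tube $\mt$ of rank $d$, the quasi-simple $R_i$ all lie in the same (finitely many) $\tau$-orbits, so for $M$ transjective preprojective, $\Hom_{\der^b(\mod kQ')}(M,R_i)\ne 0$ for \emph{some} $i$ once $M$ is "deep enough" in the preprojective component, and this morphism propagates along rays: if $(a,b)\in\mt$ has large quasi-length $b$, it has $(a,1),\dots$ as regular composition factors and $\dim_k\Hom(M,(a,b))$ grows (roughly) linearly in $b$. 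For the homogeneous tubes one uses that there are only finitely many quasi-simples $W$ of $\mod kQ'$ (of dimension vector equal to the minimal positive radical vector, up to the $\P^1$-family) with $\Hom_{kQ'}(M,W)=0$ or $\Hom_{kQ'}(W,M)=0$; generically the map is nonzero and nonzero maps again grow along the tube. I would package this as: for $M$ transjective there is a bound $B=B(M,l)$ such that any indecomposable rigid $N$ (necessarily in some tube of rank $d>1$ with $q.l.(N)\le d-1$ by Lemma~\ref{l:property-tame-clustercategory}~(1), or else transjective) with $\dim_k\Hom_{\mc_Q}(M,N)\le l$ either is transjective with its AR-quiver coordinate within distance $B$ of $M$, or is regular lying in one of the finitely many non-homogeneous tubes; and in a non-homogeneous tube there are only finitely many rigid objects to begin with. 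The transjective case is handled by the same ray/coray growth argument inside the component $\Z Q$ together with the fact that $\Hom_{\mc_Q}(M,-)$ restricted to the transjective component has cofinite support, which follows from the hereditary computation $\Hom_{\mc_Q}(M,N)=\Hom_{kQ'}(M,N)\oplus D\Hom_{kQ'}(N,\tau M)$ and the standard fact that for preprojective $M$ only finitely many indecomposables $N$ have both $\Hom_{kQ'}(M,N)=0$ and $\Ext^1_{kQ'}(M,N)=0$.

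Concretely, the steps in order: (1) normalize $M$ to a transjective $kQ'$-module or $\Sigma P$; (2) split off the transjective targets/sources $N$ using that $\mc_Q$ restricted to the transjective component is "essentially $\mod kQ'$ plus shifts of projectives", and apply the standard hereditary finiteness fact; (3) for regular $N$, discard the finitely many non-homogeneous tubes (each contains finitely many rigids) and reduce to homogeneous tubes; (4) in a homogeneous tube, parametrized by $\P^1\setminus(\text{finite set})$, show $\Hom_{\mc_Q}(M,W)$ and $\Hom_{\mc_Q}(W,M)$ are nonzero for all but finitely many quasi-simples $W$, by a dimension/Euler-form count ($\langle \dimv M,\dimv W\rangle = \dim\Hom-\dim\Ext^1$ with $\dimv W$ a fixed isotropic vector, so the Euler number is constant, and the $\Ext^1$ term forces $\Hom\ne 0$ generically), and then note growth of $\Hom$ along the ray handles the non-quasi-simple rigid objects — but since rigid regulars in a homogeneous tube must be quasi-simple, only the quasi-simples matter. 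Combining (2)--(4) gives the finiteness of both displayed sets.

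The main obstacle I expect is step (4), i.e.\ making precise the claim that $\Hom_{\mc_Q}(M,W)\neq 0$ and $\Hom_{\mc_Q}(W,M)\neq 0$ for all but finitely many quasi-simple $W$ ranging over the homogeneous tubes, uniformly in $W$, together with getting the right dependence on $l$ (not merely $l=0$). The cleanest route is via the bilinear Euler form on $K_0(\mod kQ')$: write $\langle -,-\rangle$ for the Euler form, so that for $M$ preprojective and $W$ regular, $\langle \dimv M, \dimv W\rangle = \dim_k\Hom_{kQ'}(M,W)-\dim_k\Ext^1_{kQ'}(M,W)$ and $\langle \dimv W,\dimv M\rangle = \dim_k\Hom_{kQ'}(W,M)-\dim_k\Ext^1_{kQ'}(W,M)$; since $\dimv W$ is (a multiple of) the minimal imaginary root $\delta$ for a homogeneous tube, $\langle \dimv M,\delta\rangle$ is a fixed integer independent of which homogeneous tube $W$ lies in, and by translating $M$ along the preprojective component ($\tau^{-1}$) this integer becomes large positive, forcing $\dim\Hom_{kQ'}(M,W)\ge$ any prescribed bound $>l$ for $\tau^{-m}M$ with $m\gg0$; but $M$ is fixed, so instead one observes that for the \emph{fixed} $M$, $\langle\dimv M,\delta\rangle$ and $\langle\delta,\dimv M\rangle$ are fixed integers, and $\Hom/\Ext^1$ between $M$ and $W$ differ by these constants while $\Ext^1_{kQ'}(M,W)\cong D\Hom_{kQ'}(W,\tau M)=0$ for $W$ regular and $\tau M$ preprojective — so $\dim\Hom_{kQ'}(M,W)=\langle\dimv M,\delta\rangle$ is a \emph{constant} $c_M$, and the twisted term $\Hom_{kQ'}(W,\tau M)$ in Lemma~\ref{l:compute-morphism} vanishes, giving $\dim_k\Hom_{\mc_Q}(M,W)=c_M$ for every quasi-simple $W$ in a homogeneous tube; hence the set $\{W\ \text{homog. quasi-simple}\mid \dim_k\Hom_{\mc_Q}(M,W)\le l\}$ is empty if $c_M>l$ and all of them if $c_M\le l$ — and the latter cannot happen because there are infinitely many such $W$ and one knows (e.g.\ from $M$ being a faithful-enough transjective, or by choosing the preprojective/preinjective realization so that $M$ maps nontrivially to every homogeneous tube) that $c_M\ge 1$; more carefully, one handles the borderline by noting that whatever finitely many homogeneous tubes have $c_M=0$ is in fact none, since a preprojective module has nonzero image in every tube. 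Symmetrically for $\Hom_{\mc_Q}(W,M)$. So the real content is the constancy of $\dim_k\Hom_{\mc_Q}(M,W)$ over homogeneous tubes plus its non-vanishing; everything else is bookkeeping with the finitely many non-homogeneous tubes and the hereditary-algebra finiteness on the transjective component.
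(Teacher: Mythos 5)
Your overall strategy --- realize $M$ as a preprojective/preinjective module over a derived-equivalent hereditary algebra, dispose of the regular rigid objects, and control the transjective ones by growth of Hom-dimensions --- is broadly the paper's, and the passage where you note that an indecomposable rigid regular object necessarily lies in one of the finitely many non-homogeneous tubes with bounded quasi-length (so that there are only finitely many rigid regular objects in all of $\mc_Q$) already settles the regular case completely. But the proposal then contradicts itself: steps (3)--(4) and your closing paragraph, which you single out as ``the main obstacle'' and ``the real content,'' concern quasi-simples in homogeneous tubes, and these are not rigid at all --- a rank-one tube contains no rigid object, by Proposition~\ref{p:property-of-tame-quiver}~(1) and Lemma~\ref{l:property-tame-clustercategory}~(1), since $\tau W\cong W$ forces $\Ext^1(W,W)\neq 0$. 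Worse, the argument you give there fails on its own terms: your Euler-form computation correctly shows that $\dim_k\Hom_{\mc_Q}(M,W)=c_M=\langle\dimv M,\delta\rangle$ is \emph{constant} over all homogeneous quasi-simples $W$, so for every $l\geq c_M$ the set of such $W$ with $\dim_k\Hom_{\mc_Q}(M,W)\leq l$ is \emph{infinite}; the assertion that ``the latter cannot happen because $c_M\geq 1$'' only rules out $l=0$. The statement survives solely because those $W$ are excluded by rigidity, which your step (4) never invokes.

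The genuine gap that remains after deleting the homogeneous-tube material is the transjective case for arbitrary $l$: the ``standard hereditary finiteness fact'' you cite (only finitely many $N$ with $\Hom=0=\Ext^1$) is an $l=0$ statement, and the extension via ``growth along rays/corays'' is asserted rather than proved. The paper closes this cleanly: choose the hereditary realization $H=kQ'$ so that $M$ is an indecomposable \emph{projective} module $P_i$ (resp.\ injective, for the second set); then $\dim_k\Hom_{\mc_Q}(M,N)=\dim_k\Hom_H(P_i,N)=(\dimv N)_i$ by Lemma~\ref{l:compute-morphism}, the dimension vector of any indecomposable rigid $H$-module is a positive real root of the affine root system (Kac), every such root is a root of the underlying finite system plus $m\delta$ with $\delta_i\geq 1$, so only finitely many positive roots have $i$-th coordinate at most $l$, and rigid modules are determined by their dimension vectors (Crawley-Boevey). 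Your Euler-form instincts are exactly what is needed; they just have to be aimed at the transjective component rather than at the homogeneous tubes.
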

 \begin{proof}
 Since $M$ is transjective, we may choose a tame hereditary algebra $H=kQ'$, which is derived equivalent to $kQ$, such that $M$ is identified to an indecomposable projective $H$-module. In this case, each indecomposable object of $\mc_Q$ is identified to an indecomposable $H$-module or $\Sigma P$ for some indecomposable projective $H$-module $P$.
 
 Let $\mathfrak{g}_{Q'}$ be the affine Kac-Moody algebra associated to $Q'$ and $\dot{\mathfrak{g}}$ be the associated simple Lie algebra of $\mathfrak{g}_{Q'}$.
 Denote by $\delta$ the minimal positive imaginary root of $\mathfrak{g}_{Q'}$. It is well-known that each positive real root of $\mathfrak{g}_{Q'}$ can be written as the sum of a  positive root of $\dot{\mathfrak{g}}$ with $m\delta$ for some positive integer $m$ ({\it cf.}~\cite{Kac90}). For a  vertex $i$ of $Q'$ and an integer $l$, denote by $\mathcal{S}_{i,l}$  the set consisting of positive roots of $\mathfrak{g}_{Q'}$ whose $i$-th components are less than $l$.  We clearly know that $\mathcal{S}_{i,l}$ is a finite set.
 
According to~\cite{Kac80}, the dimension vector of an indecomposable rigid $H$-module is a positive root of $\mathfrak{g}_{Q'}$. On the other hand,  different rigid $H$-modules have different dimension vectors~\cite{CB}.
 For a fixed indecomposable projective $H$-module $P_i$ and a nonnegative integer $l$, one deduces that  
 \[|\{Z\in \ind^{\operatorname{rig}} H~|~\dim_k\Hom_{H}(P_i,Z)\leq l\}|\leq|\mathcal{S}_{i,l}|<\infty,
 \]  where $\ind^{\operatorname{rig}} H$ is the set of isoclasses of indecomposable rigid $H$-modules. 
Now for any indecomposable $H$-module $N$, we have
$\Hom_{\mc_Q}(M,N)\cong \Hom_{H}(M,N)$ by Lemma~\ref{l:compute-morphism}.
 Consequently, the set $\{N\in \ind^{\operatorname{rig}} \mc_Q~|~\dim_k\Hom_{\mc_Q}(M,N)\leq l\}$ is finite.

 For the second set, one choose a tame hereditary algebra $H'$ such that $M$ is  identified to an indecomposable injective $H'$-modules.
 \end{proof}

\section{Subfactors of cluster categories}~\label{S:subfactor}
\subsection{Subfactors of triangulated categories}
Subfactors of triangulated categories were introduced in~\cite{IY}. In this subsection, we recall the basic constructions and results of subfactors for $2$-Calabi-Yau categories and we refer to~\cite{IY} for the general situation.

Let $\mc$ be a $2$-Calabi-Yau triangulated category with suspension functor $\Sigma$.
For a given functorially finite rigid subcategory $\mz$ of $\mc$, set
\[\mb:=~^\perp(\Sigma\mz):=\{X\in\mc~|~\Hom_{\mc}(X,\Sigma Z)=0~\text{for any ~$Z\in\mz$}\}.
\]
It is not hard to see that $\mb$ is functorially finite and extension closed. Moreover, $\mz\subseteq \mb$ and one may form the additive quotient category $\mb/\mz$. Surprisingly, it has been shown in~\cite{IY} that the quotient category $\mb/\mz$ inherits a triangle structure from $\mc$. Moreover, the following results has been obtained in~\cite{IY} ({\it cf.} also~\cite{BIRS}).
\begin{theorem}~\label{t:subfactor}
The category $\mb/\mz$ is also a $2$-Calabi-Yau triangulated category. Moreover,
 \begin{itemize}
 \item[(1)] an object $M\in \mb$ is rigid in $\mc$ if and only if $M$ is rigid in $\mb/\mz$;
 \item[(2)] there is a one-to-one correspondence between cluster-tilting subcategories of $\mc$ containing $\mz$ and cluster-tilting subcategories of $\mb/\mz$.
 \end{itemize}
\end{theorem}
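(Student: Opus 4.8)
The plan is to follow the Iyama--Yoshino construction~\cite{IY} (see also~\cite{BIRS}): build the triangulated structure on $\mb/\mz$ by hand and then extract (1) and (2) from elementary $\Hom$-comparisons with $\mc$. First I would construct the suspension functor of $\mb/\mz$. Since $\mz$ is covariantly finite, for each $X\in\mb$ choose a left $\mz$-approximation $\alpha_X\colon X\to Z_X$ and complete it to a triangle $X\xra{\alpha_X}Z_X\to X\langle1\rangle\to\Sigma X$ in $\mc$. Applying $\Hom_\mc(-,\Sigma Z)$ for $Z\in\mz$, and using that $\alpha_X$ is a left approximation together with the rigidity of $\mz$, one checks $X\langle1\rangle\in\mb$; then, noting that two choices of $\alpha_X$ differ by a morphism factoring through $\mz$, that $X\mapsto X\langle1\rangle$ is well defined on $\mb/\mz$; and finally that it is an autoequivalence, with quasi-inverse $\langle-1\rangle$ arising symmetrically from right $\mz$-approximations and cones of $\Sigma^{-1}$-shifted triangles.

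Next I would specify the distinguished triangles: a sequence $X\to Y\to W\to X\langle1\rangle$ in $\mb/\mz$ is declared distinguished if it is isomorphic, in $\mb/\mz$, to the sequence produced by an octahedron in $\mc$ from a triangle $X\xra{f}Y\to W'\to\Sigma X$ with all three terms in $\mb$, after comparing $W'\to\Sigma X$ with the defining triangle of $X\langle1\rangle$. Verifying (TR1)--(TR3) is then largely formal, since every morphism and every object of $\mb/\mz$ lifts to $\mb\subseteq\mc$; the genuinely laborious step, and the main obstacle, is the octahedral axiom, which forces one to assemble several octahedra in $\mc$ while tracking exactly which morphisms vanish modulo $\mz$ --- this is where the rigidity and functorial finiteness of $\mz$ enter essentially. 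Once $\mb/\mz$ is triangulated, the $2$-Calabi--Yau property descends from that of $\mc$: using the defining triangles one expresses $\Hom_{\mb/\mz}(X,Y\langle2\rangle)$ as a subquotient of $\Hom_\mc(X,\Sigma^2 Y)$ and identifies it, via $D$, with $\Hom_{\mb/\mz}(Y,X)$, the correction terms cancelling because $\Hom_\mc(X,\Sigma Z)=0=\Hom_\mc(Z,\Sigma X)$ for $X\in\mb$, $Z\in\mz$ (the second equality being the first one dualized through the $2$-Calabi--Yau property of $\mc$).

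For (1), apply $\Hom_\mc(M,-)$ to the defining triangle $M\xra{\alpha_M}Z_M\to M\langle1\rangle\xra{\gamma}\Sigma M$. Since $M\in\mb$ and $Z_M\in\mz$ we have $\Hom_\mc(M,\Sigma Z_M)=0$, so $\gamma_*\colon\Hom_\mc(M,M\langle1\rangle)\to\Hom_\mc(M,\Sigma M)$ is surjective, and its kernel is the set of maps $M\to M\langle1\rangle$ factoring through $Z_M$; because $\alpha_M$ is a left $\mz$-approximation, these are exactly the maps factoring through $\mz$, \ie the zero morphisms of $\mb/\mz$. Hence $\Hom_{\mb/\mz}(M,M\langle1\rangle)\cong\Hom_\mc(M,\Sigma M)$, so $M$ is rigid in $\mb/\mz$ if and only if it is rigid in $\mc$.

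For (2), let $\mt$ be a cluster-tilting subcategory of $\mc$ with $\mz\subseteq\mt$; rigidity of $\mt$ forces $\mt\subseteq\mb$, so $\mt/\mz$ is a subcategory of $\mb/\mz$, and I would check it is cluster-tilting there: it is rigid by (1), functorially finite by pushing $\mt$-approximations from $\mc$ down to the quotient, and maximal because the same kind of $\Hom$-comparison (now with $\Hom_\mc(T,-)$ for $T\in\mt$, using $\Hom_\mc(T,\Sigma Z_X)=0$ and that every map from $\mz$ into $X\langle1\rangle$ factors through $Z_X$) gives $\Hom_{\mb/\mz}(T,X\langle1\rangle)\cong\Hom_\mc(T,\Sigma X)$, so $\Hom_{\mb/\mz}(\mt/\mz,X\langle1\rangle)=0$ forces $\Hom_\mc(\mt,\Sigma X)=0$, whence $X\in\mt$. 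Conversely, the full preimage in $\mb$ of a cluster-tilting subcategory of $\mb/\mz$ contains $\mz$ and is cluster-tilting in $\mc$ by the same translations, and these two assignments are mutually inverse, establishing the bijection in (2).
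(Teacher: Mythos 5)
The paper offers no proof of this theorem---it is quoted from Iyama--Yoshino~\cite{IY} (see also~\cite{BIRS})---and your outline (suspension via cones of left $\mz$-approximations, standard triangles induced from octahedra in $\mc$, and the $\Hom$-comparison isomorphisms yielding (1) and (2)) reconstructs exactly the argument of those sources, so it is the same approach and is correct. One cosmetic remark: in part (1), identifying $\ker\gamma_*$ with the maps factoring through $\mz$ also requires the vanishing $\Hom_\mc(\mz,\Sigma M)=0$ (the $2$-Calabi--Yau dual of $M\in\mb$), not only the approximation property of $\alpha_M$; since you invoke precisely this vanishing elsewhere in your sketch, nothing is actually missing.
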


For the later use, let us briefly  recall the triangle structure of $\mb/\mz$. The suspension functor $\mathbb{G}:\mb/\mz\to \mb/\mz$ is defined as follows.
For each $X\in\mb$, consider the left minimal $\mz$-approximation $X\xrightarrow{\alpha_X}Z_X$ of $X$ and form a triangle in $\mc$
\[X\xrightarrow{\alpha_X}Z_X\xrightarrow{\beta_X}Y\to \Sigma X.
\]
One can check that $Y\in \mb$ and set $\mathbb{G}(X):=Y$. For a morphism $f\in \Hom_{\mc}(X,X')$ with $X,X'\in \mb$, there exists $g$ and $h$ which make the following diagram commutative
\[\xymatrix{X\ar[d]_f\ar[r]^{\alpha_X}&Z_X\ar[d]_g\ar[r]^{\beta_X}&Y\ar[d]_h\ar[r]&\Sigma X\ar[d]_{\Sigma f}\\
X'\ar[r]^{\alpha_{X'}}&Z_{X'}\ar[r]^{\beta_{X'}}&Y'\ar[r]&\Sigma X'}
\]
and we define $\mathbb{G} (\overline{f}):=\overline{h}$, where for a morphism $f\in \Hom_{\mc}(X,X')$ with $X,X'\in \mb$, we denote by $\overline{f}$ the image of $f$ in the quotient category $\mb/\mz$. As shown in~\cite{IY}, this gives a well-defined autoequivalence.

Now we turn to describe the standard triangles in $\mb/\mz$.
Let $X\xrightarrow{a}Y\xrightarrow{b}Z\xrightarrow{c}\Sigma X$ be a triangle in $\mc$ with $X,Y,Z\in \mb$. By definition of $\mb$, there is a commutative diagram of triangles
\[\xymatrix{X\ar@{=}[d]\ar[r]^a&Y\ar[d]\ar[r]^b&Z\ar[d]^d\ar[r]^c&\Sigma X\ar@{=}[d]\\
X\ar[r]^{\alpha_X}&Z_X\ar[r]^{\beta_X}&\mathbb{G} X\ar[r]&\Sigma X.}
\]
Note that the morphism $d$ is not unique, but one can  prove that different choices of the morphism $d$ yield the same image in $\mb/\mz$.
The standard triangle of $\mb/\mz$ is defined to be the  complex $X\xrightarrow{\overline{a}}Y\xrightarrow{\overline{b}}Z\xrightarrow{\overline{d}}\mathbb{G} X$ in $\mb/\mz$.

Recall that a triangulated category $\der$ is called {\it algebraic} if there is a Frobenius category $\mf$ such that $\der$ is the stable category $\underline{\mf}$ of $\mf$. We refer to~\cite{Ha} for the definition and the standard triangles of $\underline{\mf}$. The subfactors of Frobenius $2$-Calabi-Yau categories were investigated in~\cite{BIRS}.

Combining the results of~\cite{IY,BIRS}, we have the following observation.
\begin{lemma}~\label{l:algebraic-subfactor}
Let $\mc$ be an algebraic $2$-Calabi-Yau triangulated category and $\mz$ a functorially finite rigid subcategory of $\mc$. Set $\mb:=~^\perp(\Sigma \mz)$. Then the subfactor triangulated category $\mb/\mz$ is also algebraic.
\end{lemma}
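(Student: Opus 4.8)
Since $\mc$ is algebraic, fix a Frobenius category $\mf$ together with a triangle equivalence $\mc\simeq\ul{\mf}$, write $\pi\colon\mf\to\ul{\mf}=\mc$ for the projection and put $\cp:=\proj\mf=\opname{inj}\mf$, so $\pi(\cp)=0$. The plan is to realise $\mb/\mz$ as the stable category of a Frobenius \emph{subcategory} of $\mf$, which is the viewpoint developed in~\cite{BIRS}. First I would lift $\mz$ to $\mf$: let $\tilde{\mz}$ be the full subcategory of $\mf$ on all objects $X$ with $\pi(X)\in\mz$, equivalently $\tilde{\mz}=\add(\mz_0\cup\cp)$ for a choice $\mz_0$ of lifts of the indecomposables of $\mz$. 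Because $\mf$ is Frobenius and stably $2$-Calabi-Yau one has $\Ext^1_{\mf}(X,Y)\cong\ul{\Hom}_{\mf}(X,\Sigma Y)=\Hom_{\mc}(\pi X,\Sigma\pi Y)$ for $X,Y\in\mf$; hence the rigidity of $\mz$ gives $\Ext^1_{\mf}(\tilde{\mz},\tilde{\mz})=0$, we have $\cp\subseteq\tilde{\mz}$, and $\tilde{\mz}$ is functorially finite in $\mf$ (a right, resp.\ left, $\mz$-approximation of $\pi(X)$ lifts and, combined with a projective cover, resp.\ an admissible monomorphism into an injective of $\mf$, yields a right, resp.\ left, $\tilde{\mz}$-approximation of $X$). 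Using the same identification, the preimage
\[
\tilde{\mb}:=\{X\in\mf~|~\Ext^1_{\mf}(X,\tilde{\mz})=0\}=\pi^{-1}(\mb)
\]
is extension closed in $\mf$ and contains $\cp$; I equip it with the exact structure inherited from $\mf$.

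The heart of the matter is to check, following~\cite{BIRS}, that $\tilde{\mb}$ with this exact structure is a Frobenius category whose projective--injective objects are exactly $\add\tilde{\mz}$. Objects of $\tilde{\mz}$ are injective in $\tilde{\mb}$ since $\Ext^1_{\tilde{\mb}}(-,\tilde{\mz})$ is a subgroup of $\Ext^1_{\mf}(-,\tilde{\mz})$, which vanishes on $\tilde{\mb}$; dually they are projective in $\tilde{\mb}$. The remaining point is that $\tilde{\mb}$ has enough injectives lying in $\add\tilde{\mz}$: for $X\in\tilde{\mb}$ take a left $\tilde{\mz}$-approximation $u\colon X\to Z_X$ and an admissible monomorphism $\iota\colon X\rightarrowtail I_X$ with $I_X\in\cp\subseteq\tilde{\mz}$; then $\binom{u}{\iota}\colon X\to Z_X\oplus I_X$ is an inflation (its cokernel $Y$ is the pushout of $\iota$ along $u$), it is still a left $\tilde{\mz}$-approximation, and applying $\Hom_{\mf}(-,Z)$, $Z\in\tilde{\mz}$, to the conflation $X\rightarrowtail Z_X\oplus I_X\twoheadrightarrow Y$, the surjectivity of $\Hom_{\mf}(Z_X\oplus I_X,Z)\to\Hom_{\mf}(X,Z)$ together with $\Ext^1_{\mf}(Z_X\oplus I_X,Z)=0$ forces $\Ext^1_{\mf}(Y,Z)=0$, i.e.\ $Y\in\tilde{\mb}$. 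This is precisely the Iyama--Yoshino cokernel computation of~\cite{IY} transported to the exact setting; the dual argument (right approximations and projective covers) handles projectives. Hence $\tilde{\mb}$ is Frobenius with $\proj\tilde{\mb}=\add\tilde{\mz}=\opname{inj}\tilde{\mb}$, so $\ul{\tilde{\mb}}:=\tilde{\mb}/[\add\tilde{\mz}]$ is an algebraic triangulated category.

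Finally I would identify $\ul{\tilde{\mb}}$ with $\mb/\mz$ as triangulated categories. On objects and morphisms this is immediate: via $\pi$ the quotient $\tilde{\mb}/[\cp]$ is the full subcategory $\mb$ of $\mc$ (morphisms being computed as $\Hom_{\mc}$), so, since $\cp\subseteq\tilde{\mz}$, one gets $\ul{\tilde{\mb}}=\bigl(\tilde{\mb}/[\cp]\bigr)\big/[\pi(\tilde{\mz})]=\mb/\mz$. For the triangulated structures, note that a conflation in $\tilde{\mb}$ is a conflation in $\mf$, hence maps under $\pi$ to a triangle of $\mc$ with all terms in $\mb$; and since $\pi(I_X)=0$, the suspension conflation $X\rightarrowtail Z_X\oplus I_X\twoheadrightarrow Y$ constructed above maps to the triangle $X\to Z_X\to Y\to\Sigma X$ in $\mc$ in which $X\to Z_X$ is a left $\mz$-approximation, i.e.\ exactly the triangle defining the Iyama--Yoshino suspension $\mathbb{G}X\cong Y$ of $\mb/\mz$. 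Chasing the defining commutative diagram of the standard triangles of $\mb/\mz$ then shows $\pi$ carries standard triangles of $\ul{\tilde{\mb}}$ to standard triangles of $\mb/\mz$, so the comparison functor is a triangle equivalence and $\mb/\mz\simeq\ul{\tilde{\mb}}$ is algebraic. The main obstacle is the middle step---the Frobenius property of $\tilde{\mb}$, i.e.\ the exact-category analogue of the key lemma of~\cite{IY}---but this is what~\cite{BIRS} establishes, so in the write-up it can be quoted rather than reproved, leaving only the routine bookkeeping of the first and last paragraphs.
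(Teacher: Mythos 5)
Your proposal is correct and follows essentially the same route as the paper: lift $\mz$ and $\mb$ to the Frobenius model $\mf$ by taking preimages under the projection, invoke the result of~\cite{BIRS} (Theorem II 2.6 there) that the lifted subcategory $\tilde{\mb}$ is Frobenius with projective--injectives $\tilde{\mz}$, and then identify the stable category $\underline{\tilde{\mb}}$ with the Iyama--Yoshino subfactor $\mb/\mz$ together with its triangles. The only cosmetic difference is that you spell out the approximation/conflation argument behind the Frobenius property and the triangle comparison, whereas the paper simply cites~\cite{BIRS} and declares the comparison straightforward.
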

\begin{proof}
Let $\mf$ be a Frobenius category such that $\underline{\mf}=\mc$. Denote by $\mathcal{P}$ the full subcategory of $\mf$ consisting of projective-injective objects of $\mf$. Let $\hat{\mz}$ be the preimage of $\mz$ under the projection $\pi:\mf\to \underline{\mf}=\mc$ and define
\[\hat{\mb}:=\{X\in \mf~|~\Ext^1_{\mf}(X, Z)=0~\text{for $Z\in \hat{\mz}$}\}.
\]
Clearly, we have $\hat{\mb}/\mathcal{P}=\mb$. Recall that $\mz$ is functorially finite rigid in $\mc$ implies that $\mb$ is also functorially finite in $\mc$, from which one can deduce that $\hat{\mb}$ is functorially finite in $\mf$. Moreover, by definition of $\hat{\mb}$, it is clear that $\hat{\mb}$ is extension closed. Now by Theorem II $2.6$ of~\cite{BIRS}, we know that $\hat{\mb}$ is a Frobenius category and $\hat{\mz}$ is the subcategory of  projective-injective objects. Moreover, the stable category $\underline{\hat{\mb}}=\hat{\mb}/\hat{\mz}$ is the same as the subfactor category $\mb/\mz$ as additive category. Now it is straightforward to check that  the standard triangles given by the Frobenius structure of $\hat{\mb}$ coincide with the standard triangles given by the subfactor $\mb/\mz$.
\end{proof}

\subsection{Subfactors of cluster categories}
In this subsection, we apply Iyama-Yoshino's construction of subfactors to cluster categories.  
Namely, we have the following main result of this subsection.
\begin{theorem}~\label{t:subfactor-cluster-category}
Let $\mc$ be a cluster category and $Z$ a rigid object. The $2$-Calabi-Yau triangulated category $^\perp(\add \Sigma Z)/\add Z$ is a cluster category. Moreover, there is a one-to-one correspondence between cluster-tilting objects of $\mc$ containing $Z$ and cluster-tilting objects of $^\perp(\add \Sigma Z)/\add Z$.
\end{theorem}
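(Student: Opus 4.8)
The plan is to verify that $\mc' := {}^\perp(\add\Sigma Z)/\add Z$ is an algebraic $2$-Calabi-Yau triangulated category carrying a cluster-tilting object whose endomorphism algebra is a cluster-tilted algebra, and then to invoke a recognition theorem to conclude that $\mc'$ is a cluster category. Put $\mz=\add Z$ and $\mb={}^\perp(\Sigma\mz)$. Since $Z$ is a single object of a Krull--Schmidt category, $\mz$ is functorially finite; it is rigid because $Z$ is; so Theorem~\ref{t:subfactor} gives that $\mc'=\mb/\mz$ is a $2$-Calabi-Yau triangulated category, and Lemma~\ref{l:algebraic-subfactor} gives that it is algebraic, using that cluster categories are algebraic (they are stable categories of Frobenius categories, see~\cite{K}). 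Moreover $Z$ is a direct summand of a basic cluster-tilting object $T=Z\oplus\overline Z$ of $\mc$: every rigid object of a cluster category extends to a cluster-tilting object, and by Lemma~\ref{l:cluster-vs-tilting} we may even take $T$ to be a tilting module over a hereditary algebra $H$ derived equivalent to $kQ$, so that $Z$ becomes a partial tilting $H$-module. By Theorem~\ref{t:subfactor}(2) the image $\overline U$ of $\overline Z$ in $\mc'$ is then a basic cluster-tilting object.

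The next step is to compute $\End_{\mc'}(\overline U)$. Let $\Lambda=\End_\mc(T)$, let $e\in\Lambda$ be the idempotent cutting out the summand $Z$, and set $\Gamma=\Lambda^{op}$, the cluster-tilted algebra of $T$. Then $\End_\mc(\overline Z)=(1-e)\Lambda(1-e)$, and the maps $\overline Z\to\overline Z$ that factor through $\add Z$ form exactly the ideal $(1-e)\Lambda e\Lambda(1-e)$. Since morphisms in $\mb/\mz$ are morphisms in $\mc$ modulo the ideal of maps factoring through $\mz$, and since $(1-e)\Lambda(1-e)/(1-e)\Lambda e\Lambda(1-e)\cong\Lambda/\Lambda e\Lambda$ canonically, this yields $\End_{\mc'}(\overline U)^{op}\cong\Gamma/\Gamma e\Gamma$. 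By Theorem~\ref{t:cluster-tilted-algebras}(b) the algebra $\Gamma/\Gamma e\Gamma$ is again a cluster-tilted algebra, and by part (c) its quiver $Q''$ has no loops nor $2$-cycles. Hence $\mc'$ is an algebraic $2$-Calabi-Yau triangulated category with a cluster-tilting object whose endomorphism algebra is a cluster-tilted algebra; by the recognition theorem for such categories (Keller--Reiten, completed by Amiot), there is a triangle equivalence $\mc'\simeq\mc_{Q''}$, so $\mc'$ is a cluster category. Alternatively, to rely only on the hereditary form of the recognition theorem, one can choose $H$ and the complement $\overline Z$ so that $\overline Z$ corresponds, under the Geigle--Lenzing--Schofield equivalence of the right perpendicular category of $Z$ in $\mod H$ with $\mod H''$ ($H''$ hereditary), to the regular representation of $H''$; then $\End_{\mc'}(\overline U)^{op}\cong H''$ is hereditary, and one appeals to Keller--Reiten directly.

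I expect the genuine difficulty to lie exactly here: having (or proving) a recognition theorem in the needed generality, equivalently producing a cluster-tilting object of $\mc'$ with \emph{hereditary} endomorphism algebra, which requires matching the $\Ext$-perpendicular subcategory of $\mod H$ with the $\Sigma$-perpendicular subcategory ${}^\perp(\add\Sigma Z)$ of $\mc$ and controlling how the endomorphism algebra behaves under the subfactor construction. The remaining \emph{moreover} assertion is then immediate from Theorem~\ref{t:subfactor}(2): in a Hom-finite Krull--Schmidt $2$-Calabi-Yau category the cluster-tilting subcategories are precisely the subcategories $\add T$ with $T$ a basic cluster-tilting object, and those containing $\mz$ are precisely the ones with $Z$ a direct summand of $T$; the bijection of Theorem~\ref{t:subfactor}(2) sends such an $\add(Z\oplus\overline Z)$ to $\add\overline U$, giving the asserted one-to-one correspondence between cluster-tilting objects of $\mc$ containing $Z$ and cluster-tilting objects of $\mc'$.
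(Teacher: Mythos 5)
Your reduction coincides with the paper's up to the last step: both arguments show that $\mc':={}^\perp(\add\Sigma Z)/\add Z$ is an algebraic $2$-Calabi-Yau triangulated category via Theorem~\ref{t:subfactor} and Lemma~\ref{l:algebraic-subfactor}, identify the endomorphism algebra of a cluster-tilting object of $\mc'$ with $\Gamma/\Gamma e\Gamma$, and use Theorem~\ref{t:cluster-tilted-algebras}~(b),(c) to see that this is a cluster-tilted algebra whose quiver has no loops nor $2$-cycles. The divergence --- and the gap --- is in the recognition step. The recognition theorem the paper relies on (the one actually cited, from Keller--Reiten) requires a cluster-tilting object whose quiver is \emph{acyclic}, and the quiver of $\Gamma/\Gamma e\Gamma$ need not be acyclic. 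Your primary route appeals to a stronger recognition statement (``Keller--Reiten, completed by Amiot'') for cluster-tilting objects with cluster-tilted endomorphism algebra; that statement is not established here, and in the form you need it is essentially equivalent to the point at issue (its standard proof itself proceeds by mutating to the acyclic case). Your fallback via Geigle--Lenzing--Schofield perpendicular categories is only a sketch: matching the $\Ext$-perpendicular category of $Z$ in $\mod H$ with the subfactor ${}^\perp(\add\Sigma Z)/\add Z$, and choosing a complement whose image has hereditary endomorphism algebra, are exactly the nontrivial points, and you leave them unproved (as you acknowledge).

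The paper closes this gap with an elementary mutation argument that your write-up is missing. Since the quiver of $\Gamma/\Gamma e\Gamma$ has no loops nor $2$-cycles, Theorem~\ref{t:cluster-structure} shows that $\mc'$ carries a cluster structure, so mutation of cluster-tilting objects in $\mc'$ tracks Fomin--Zelevinsky quiver mutation. Because the quiver of any cluster-tilted algebra is mutation-equivalent to an acyclic quiver (it is the quiver of a cluster-tilting object in some acyclic cluster category, and by Lemma~\ref{l:cluster-vs-tilting}~(b) that object is reachable from the hereditary one by mutations), one performs the corresponding sequence of mutations on the cluster-tilting object inside $\mc'$ to obtain a cluster-tilting object $M$ with $Q_M$ acyclic, and only then applies the acyclic Keller--Reiten recognition theorem. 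With this step inserted, your argument becomes the paper's. Your treatment of the ``moreover'' clause via Theorem~\ref{t:subfactor}~(2) is correct and agrees with the paper.
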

\begin{proof}
According to Theorem~\ref{t:subfactor}, it remains to prove that $^\perp(\add \Sigma Z)/\add Z$ is a cluster category. It follows from Keller's construction~\cite{K} that the cluster category $\mc$ is algebraic. Consequently, the subfactor $^\perp(\add \Sigma Z)/\add Z$ is also algebraic by Lemma~\ref{l:algebraic-subfactor}.
By Keller-Reiten's recognition theorem of cluster category~\cite{KR}, it suffices to show that there is a basic cluster-tilting object $M\in~ ^\perp(\add \Sigma Z)/\add Z$ such that its quiver $Q_M$ is acyclic.

Let $T$ be a cluster-tilting object in $^\perp(\add \Sigma Z)/\add Z$, then $T\oplus Z$ is a cluster-tilting object in $\mc$. Denote by $\Gamma=\End_{\mc}(T\oplus Z)^{op}$ the cluster-tilted algebra of $T\oplus Z$ and $e_Z$ the idempotent associated to $Z$. We clearly have $\End_{^\perp(\add \Sigma Z)/\add Z}(T)^{op}\cong \Gamma/\Gamma e_Z\Gamma$, which is a cluster-tilted algebra by Theorem~\ref{t:cluster-tilted-algebras}. Moreover, the quiver of $\End_{^\perp(\add \Sigma Z)/\add Z}(T)^{op}$ has no loops nor $2$-cycles. Consequently, the $2$-Calabi-Yau category $^\perp(\add \Sigma Z)/\add Z$ admits a cluster structure by Theorem~\ref{t:cluster-structure}. Let $Q$ be the quiver of $\End_{^\perp(\add \Sigma Z)/\add Z}(T)^{op}$. The quiver $Q$ is the quiver of a cluster-tilted algebra implies that there is a sequence $i_1,\cdots, i_t$ of vertices of $Q$ such that the quiver corresponding to the skew-symmetric matrix $\mu_{i_t}\circ\cdots \mu_{i_1}(B(Q))$ is acyclic. By definition of cluster structure, applying the  corresponding mutations to the cluster-tilting object $T$ in $^\perp(\add \Sigma Z)/\add Z$, we obtain a cluster-tilting object $M\in~ ^\perp(\add \Sigma Z)/\add Z$ such that the quiver $Q_M$ is acyclic. This finishes the proof.

\end{proof}

\subsection{Subfactors of cluster categories of tame type} In this subsection, we study the subfactor of cluster categories of tame type with respect to an indecomposable rigid object.
 The following easy result on determining AR-triangles will be used frequently.
\begin{lemma}~\label{l:AR-triangles}
Let $\mc$ be a $2$-Calabi-Yau triangulated category with suspension functor $\Sigma$.
\begin{itemize}
\item[(1)] Let $X,Z, Y_1,\cdots, Y_t$ be non-isomorphic indecomposable objects of $\mc$. Assume that 
\[X\xrightarrow{f}Y_1\oplus\cdots\oplus Y_t\xrightarrow{g}Z\to \Sigma X\]
is a triangle of $\mc$ such that each component of $f$ and $g$ is irreducible, then the above triangle is an AR-triangle of $\mc$. In particular, $\Sigma Z\cong X$;
\item[(2)] Let $M\in \mc$ be a rigid object and $\mc':=~^\perp(\Sigma M)/\add M$ the subfactor determined by $M$. Let $X\xrightarrow{f} Y\xrightarrow{g} Z\to \Sigma X$ be an AR-triangle of $\mc$ such that $X,Y,Z\in ~^\perp(\Sigma M)$, then $X\xrightarrow{\overline{f}}Y\xrightarrow{\overline{g}}Z\to \mathbb{G}X$ is an AR-triangle of $\mc'$, where $\mathbb{G}$ is the suspension functor of $\mc'$.
\end{itemize}
\end{lemma}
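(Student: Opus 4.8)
The plan is to treat the two parts separately, with part (1) the source of the main content and part (2) a fairly direct transport of structure through the subfactor construction recalled just before the lemma. For part (1), recall that in a $2$-Calabi-Yau triangulated category the AR-triangle ending at an indecomposable $Z$ exists and has the form $\tau Z\to E\to Z\to \Sigma\tau Z$, with the middle map a minimal right almost split map and the left map a minimal left almost split map; moreover $2$-Calabi-Yau implies $\tau=\Sigma$, so this AR-triangle reads $\Sigma Z\xrightarrow{f_0}E\xrightarrow{g_0}Z\to\Sigma^2 Z$. The strategy is to compare the given triangle $X\xrightarrow{f}\bigoplus_i Y_i\xrightarrow{g}Z\to\Sigma X$ with this AR-triangle. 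Since $Z$ is indecomposable and $g$ has all components irreducible, $g$ is not a split epimorphism, hence it factors through the minimal right almost split map $g_0$: there is $h:\bigoplus_i Y_i\to E$ with $g_0 h=g$. I would then argue $h$ is an isomorphism: because each $Y_i$ is indecomposable and each component $Y_i\to E$ of $h$ is a composite in the radical, using that $g_0$ is right minimal and that the $Y_i$ are pairwise non-isomorphic indecomposables with irreducible maps to $Z$, a standard Krull–Schmidt / Harada–Sai type argument (or directly: $h$ induces an isomorphism on the radical quotients detecting irreducible maps into $Z$, and a non-isomorphism would force a component of $g$ to factor through $\rad^2$, contradicting irreducibility) shows $h$ is invertible. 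Completing $h$ to an isomorphism of triangles then identifies $X\xrightarrow{f}\bigoplus Y_i\xrightarrow{g}Z$ with the AR-triangle up to isomorphism, whence $X\cong\Sigma Z$ and the triangle is an AR-triangle. I expect this identification of $h$ with an isomorphism to be the main obstacle — it is where one must use, in an essential way, that \emph{all} the $Y_i$ appear and that \emph{both} $f$ and $g$ have only irreducible components (the symmetric argument on the left, factoring $f$ through the minimal left almost split map out of $X$, gives the other half and pins down minimality).

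For part (2), I would invoke the explicit description of the standard triangles of $\mc':={}^\perp(\Sigma M)/\add M$ recalled above: given the triangle $X\xrightarrow{f}Y\xrightarrow{g}Z\xrightarrow{c}\Sigma X$ in $\mc$ with $X,Y,Z\in{}^\perp(\Sigma M)$, one builds the commutative diagram with the approximation triangle $X\xrightarrow{\alpha_X}Z_X\xrightarrow{\beta_X}\mathbb{G}X\to\Sigma X$ and obtains the standard triangle $X\xrightarrow{\overline f}Y\xrightarrow{\overline g}Z\xrightarrow{\overline d}\mathbb{G}X$ in $\mc'$. I must check three things: that $X,Y,Z$ remain indecomposable in $\mc'$ (they do — an indecomposable of $\mc$ not in $\add M$ stays indecomposable in the additive quotient, since $\End$ only shrinks and idempotents lift along the quotient by a standard argument; if some $Y_i\in\add M$ one simply drops it, which does not affect being an AR-triangle because objects of $\add M$ are zero in $\mc'$), that $\overline g$ is right almost split in $\mc'$, and that $\overline g$ is right minimal. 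Right almost split: any $\varphi:W\to Z$ in $\mc'$ with $W\in\mc'$ indecomposable and $\varphi$ not a split epi lifts to $\tilde\varphi:W\to Z$ in $\mc$ (choose any preimage); if $\tilde\varphi$ is not a split epi in $\mc$ it factors through $g$ by the AR-property in $\mc$, and projecting down gives a factorization through $\overline g$; if $\tilde\varphi$ \emph{is} a split epi in $\mc$ then $W\cong Z\oplus M'$ with $M'\in\add M$, so $W\cong Z$ in $\mc'$ and $\varphi$ is a split epi there, contrary to assumption. Right minimality of $\overline g$ then follows because $\overline g$ is a right almost split map with indecomposable target, and such maps are automatically right minimal; Lemma~\ref{l:AR-triangles}(1) (or the characterization of AR-triangles in a $2$-CY category) then identifies the standard triangle $X\xrightarrow{\overline f}Y\xrightarrow{\overline g}Z\to\mathbb{G}X$ as the AR-triangle ending at $Z$ in $\mc'$, which is the claim. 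The only mild subtlety here is the interaction of lifting morphisms through the quotient $\mc\to\mc'$ with the split/non-split dichotomy, but this is routine given the material already assembled.
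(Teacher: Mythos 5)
The paper states this lemma without proof, so I can only judge your argument on its own terms. Part (2) is essentially fine. The problem is in part (1), at exactly the step you flag as the main obstacle: your justification that $h\colon\bigoplus_i Y_i\to E$ is an isomorphism only proves that $h$ is a split \emph{monomorphism}. Indeed, writing the AR-triangle ending at $Z$ as $\Sigma Z\xrightarrow{f_0}E\xrightarrow{g_0}Z\to\Sigma^2Z$, the relation $g_i=g_0\circ(h e_i)$ together with $g_0\in\rad$ and $g_i\notin\rad^2$ forces $he_i\notin\rad(Y_i,E)$, so each $Y_i$ is a direct summand of $E$ and, the $Y_i$ being pairwise non-isomorphic, $h$ is a split mono. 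Nothing in your argument excludes $E\cong\bigl(\bigoplus_iY_i\bigr)\oplus C$ with $C\neq 0$: the classes of the $g_i$ give one nonzero element of each $\Irr(Y_i,Z)=\rad(Y_i,Z)/\rad^2(Y_i,Z)$, not a basis, so ``$h$ induces an isomorphism on radical quotients'' is precisely what has to be proved, and a proper split mono into $E$ does not contradict right minimality of $g_0$. This is a genuine issue, not a formality: for $Z=(a,b)$ in a tube of rank at least $3$, the single irreducible map $(a,b-1)\to(a,b)$ sits in a triangle whose middle term misses the summand $(a-1,b+1)$ of $E$; that triangle is not almost split, and what rules it out here is only the hypothesis on $f$, which your argument never brings to bear on the extra summand $C$. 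The ``symmetric argument on the left'' produces the dual split epimorphism from the middle of the AR-triangle starting at $X$, which again does not close the gap on its own.

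The missing idea is to transfer the hypothesis on $f$ to the induced map between the end terms. Once $h$ is known to be a split mono, identify $E\cong Y\oplus C$ so that $h$ becomes the inclusion of the first summand, and complete the commuting square $g_0h=g$ to a morphism of triangles with third component $v\colon X\to\Sigma Z$; the commutativity $hf=f_0v$ then reads, componentwise, $f_i=(f_0)_{Y_i}\circ v$. Each $(f_0)_{Y_i}$ is a component of the minimal left almost split map $f_0$, hence irreducible and in particular not a split epimorphism, so irreducibility of $f_i$ forces $v$ to be a split monomorphism, hence an isomorphism between the indecomposables $X$ and $\Sigma Z$; the five lemma then gives $C=0$ and identifies your triangle with the AR-triangle. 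With this insertion part (1) is complete. In part (2) the only imprecision is the claim that a right almost split map with indecomposable target is automatically right minimal (false as stated); you instead use that $X$ is indecomposable and the connecting map is nonzero to split off any superfluous summand of $Y$, which is routine.
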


In the following, we assume that $Q$ is a connected extended Dynkin quiver and $\mc_Q$ is the cluster category associated to $Q$.  Let $M$ be an indecomposable rigid object of $\mc_Q$ and  $\mc:=~^\perp(\Sigma M)/\add M$ the subfactor of $\mc_Q$ determined by $M$.
By Theorem~\ref{t:subfactor-cluster-category}, we know that $\mc$ is a cluster category.
Assume moreover that $M$ is regular in $\mc_Q$. We are going to determine the connected AR components of $\mc$ which are induced from tubes of $\mc_Q$.
Recall that $\mc$ is a $2$-Calabi-Yau triangulated category by Theorem~\ref{t:subfactor}, which implies that its Auslander-Reiten translation coincides with the suspension functor $\mathbb{G}$ of $\mc$.

  For a tube $\mt$ of $\mc_Q$ such that $M\not\in \mt$, we have $\mt\subset ~^\perp(\Sigma M)$ by Lemma~\ref{l:property-tame-clustercategory}~(2). It follows that the tube $\mt$ remains a tube with the same rank  in the subfactor~$\mc$ by Lemma~\ref{l:AR-triangles}~(2).

Now we turn to the tube $\mt_M$ containing $M$. Without loss of generality, we may assume that the rank of $\mt_M$ is $d$ and $M=(1,t)$, where $t<d$.
Let $\mt_{M,\mc}$ be a representative set of indecomposable objects of $\mt_M$ which  do not lie on the corays ending at $(d,1), (1,1),(2,1),\cdots, (t-1,1)$ or the rays starting at $(2,1), (3,1),\cdots, (t+1,1)$.
By Lemma~\ref{l:compute-morphism}, we know that
\[~^\perp(\Sigma M)\cap \mt_M=\mt_{M, \mc}\cup \cw_{M}.
\]
See Figure 2. for an illustration of $\mt_{M,\mc}\cup \cw_M$.

\begin{figure}
\begin{center}
\begin{tikzpicture}[
    scale=4,axis/.style={ very thick, ->, >=stealth'},
    arrow/.style={thick, ->, >=stealth'},
    important line/.style={thick},
    dashed line/.style={dashed, thin},
    pile/.style={thick, ->, >=stealth', shorten <=2pt, shorten
    >=2pt},
    every node/.style={color=black}
    ]
  
    \draw[dashed line](-1,0) --(1,0)[right];
    \draw[dashed line](-1,.4)--(1,.4)[right];
    \draw[color=red,important line](-.6,0)--(-.2,.4)[right]node[above]{\tiny{$(d,t)$}};
    \node at(-.6,0){\tiny{$\circ$}};
    \node at(-.2,.4){\tiny{$\circ$}};
    \node at (-.6,-.1){\tiny{$(d,1)$}};
    \node at(0,.4){\tiny{$\circ$}};
    \node at(0,.45){\tiny{M}};
    \node at(.4,0){\tiny{$\circ$}};
    \node at(.4,-.1){\tiny{$(t,1)$}};
    \node at(-.4,-.1){\tiny{$(1,1)$}};
    \node at(-.4,0){\tiny{$\circ$}};
    \node at(0,.2){\tiny{$\cw_M$}};
    \draw[important line](0,.4)--(-.4,0)[left];
    \draw[important line](0,.4)--(.4,0)[left];
    \draw[color=red,important line](.6,0)--(.2,.4)[left]node[above]{\tiny{$(2,t)$}};
    \node at(.2,.4){\tiny{$\circ$}};
    \node at(.6,0){\tiny{$\circ$}};
    \node at(.65, -.1){\tiny{$(t+1,1)$}};
    \node at(-.6,.45){\tiny{$L_M$}};
    \node at(.6,0.45){\tiny{$R_M$}};
    \draw[color=red,important line](-.6,0)--(-1,.4)[right];
    \draw[color=red,important line](-.2,.4)--(-.8,1)[right];
    \draw[color=red,important line](.6,0)--(1,.4)[right];
    \draw[color=red,important line](0.2, 0.4)--(.8,1)[right];
\end{tikzpicture}
\end{center}
\caption{$\mt_{M,\mc}\cup \cw_M$ consists of objects which do not lie in $L_M$ and $R_M$.}
\end{figure}
\begin{lemma}~\label{l:AR-tube-finite}
Indecomposable objects in $\cw_M\backslash \add M$ form a connected AR component $\Gamma_{\circ}$ of $\mc$. Moreover, $\Gamma_{\circ}$ is the AR quiver of  a cluster category of a Dynkin quiver of type $A_{t-1}$.
\end{lemma}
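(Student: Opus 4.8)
The plan is to recognise $\cw_M\backslash\add M$ as a single finite connected component $\Gamma_\circ$ of the AR quiver of $\mc$, and then to identify the corresponding cluster category as one of type $A_{t-1}$. Write $S$ for the set of indecomposable objects lying in $\cw_M\backslash\add M$, and put $\mb:={}^\perp(\Sigma M)$, $\mz:=\add M$, so $\mc=\mb/\mz$; from the displayed formula in the text $\cw_M\subseteq\mb$, and since $\mz$ is closed under direct summands and no object of $S$ is a summand of $M$, the objects of $S$ are pairwise non-isomorphic indecomposable objects of $\mc$. Recall also that $\mc$ is itself a cluster category (Theorem~\ref{t:subfactor-cluster-category}), so its AR-translation coincides with its suspension functor, which is the functor $\mathbb{G}$ of the Iyama--Yoshino construction. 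It thus suffices to prove: (i) for every $W\in S$ the AR-triangles of $\mc$ touching $W$ have all their terms in $S$; (ii) $S$ is connected and finite; (iii) the resulting component is the AR quiver of a cluster category of type $A_{t-1}$.

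For (i) I would first compute $\mathbb{G}$ on $S$. An object $(a,b)\in\cw_M$ is a submodule of $M=(1,t)$ precisely when $a=1$; for $a\ge 2$ one checks directly (using Lemma~\ref{l:compute-morphism} and the coordinates of $M$) that $\Hom_{\mc_Q}((a,b),M)=0$, so the minimal left $\add M$-approximation vanishes and $\mathbb{G}(a,b)=\Sigma(a,b)=(a-1,b)\in S$; while for $a=1$ (so $b<t$) the minimal left $\add M$-approximation is the inclusion $(1,b)\hookrightarrow M$, whose cone in $\mc_Q$ is $M/(1,b)=(b+1,t-b)$, so $\mathbb{G}(1,b)=(b+1,t-b)\in S$. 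Hence $\mathbb{G}$ restricts to a bijection $S\to S$ carrying the "left edge" $\{(1,b)\}$ onto the slanted edge $\{(a,b):a+b=t+1\}\backslash\add M$ of the wing. Next, for $W=(a,b)$ with $a\ge 2$ a direct inspection shows that the AR-triangle of $\mc_Q$ ending at $W$ has all its terms in $\cw_M\subseteq\mb$ (the summand $M$, which occurs only for $W=(2,t-1)$, lies in $\cw_M$ as its apex); by Lemma~\ref{l:AR-triangles}(2) this triangle descends to the AR-triangle of $\mc$ ending at $W$, a possible summand $M$ of the middle term becomes zero in the quotient, and the remaining terms are again in $S$. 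If $W=(1,b)$ lies on the left edge, the $\mc_Q$-AR-triangle of $W$ leaves $\mb$, so instead I would use the $\mc_Q$-morphism $c\colon(1,b)\to(b,t-b)$ whose image is the quasi-simple at the socle of $(b,t-b)$ (note $(b,t-b)=\mathbb{G}^2(1,b)$ is the target of the connecting map of the desired AR-triangle): a kernel/cokernel computation in the tube shows that the cocone of $c$ is $(1,b-1)\oplus(b+2,t-b-1)$ (with the convention that a coordinate of the form $(\ast,0)$ is dropped), which lies in $\mb$, so the corresponding triangle descends; since $\End_\mc((1,b))=k$ and $c$ does not factor through $\add M$ (because $\Hom_{\mc_Q}(M,(b,t-b))=0$), its image is exactly the AR-triangle of $\mc$ ending at $(1,b)$, again with all terms in $S$. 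Together with the $\mathbb{G}$-stability of $S$ this gives (i): $S$ is closed under arrows into and out of its members, hence is a union of connected components; it is obviously connected and finite, so it is a single finite component $\Gamma_\circ$.

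For (iii), apply Lemma~\ref{l:AR-shape}(2) to $\Gamma_\circ$: the full subcategory $\mc_\circ$ on $\Gamma_\circ$ is a cluster category of finite type, and $\mc=\mc_\circ\oplus\mc_\bullet$ for a triangulated subcategory $\mc_\bullet$. To pin down the type I would complete the object $(1,1)\oplus(1,2)\oplus\cdots\oplus(1,t)$ — which is rigid in $\mc_Q$ and is maximal among rigid objects supported in $\cw_M$ (inside $\cw_M$ it corresponds to the regular representation of $kA_t$) — to a basic cluster-tilting object $\widehat{T}$ of $\mc_Q$ containing $M$. By Theorem~\ref{t:subfactor-cluster-category}, $\widehat{T}/M$ is cluster-tilting in $\mc$; its summands $(1,1),\dots,(1,t-1)$ lie in $\mc_\circ$ while, by the maximality just noted, all its other summands lie outside $\cw_M$, hence in $\mc_\bullet$. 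Therefore $(1,1)\oplus\cdots\oplus(1,t-1)$ is a cluster-tilting object of $\mc_\circ$ with $t-1$ indecomposable summands, and a short computation (the inclusions $(1,i)\hookrightarrow(1,i+1)$ are the only irreducible maps among these in $\add\widehat{T}$, and no arrow passes through $M$ since $\Hom_{\mc_Q}(M,(1,j))=0$ for $j<t$) shows its quiver is the linearly oriented $A_{t-1}$. Hence $\mc_\circ$ is a cluster category of type $A_{t-1}$ and $\Gamma_\circ$ is its AR quiver, as claimed. As a consistency check, $|\Gamma_\circ|=|\cw_M|-1=\tfrac{t(t+1)}{2}-1=\tfrac{(t-1)(t+2)}{2}$, which is the number of indecomposable objects of a cluster category of type $A_{t-1}$.

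The main obstacle is the case analysis underlying (i): verifying, in tube coordinates, that every AR-triangle of $\mc$ meeting $S$ stays inside $S$. The delicate instances are the ones at the boundary of the wing, where the naive $\mc_Q$-AR-triangle either leaves ${}^\perp(\Sigma M)$ (the left edge) or picks up the summand $M$ which collapses in the quotient (the top of the slanted edge); handling them needs a precise description of which indecomposables of the tube $\mt_M$ lie in ${}^\perp(\Sigma M)$, together with explicit socle/top, kernel and cokernel computations for regular modules in a tube. Everything else reduces to results recalled in Sections~\ref{S:cluster-tilting-theorey} and~\ref{S:subfactor}.
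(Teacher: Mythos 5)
Your proof is correct, and its core --- computing the subfactor suspension $\mathbb{G}$ on the wing and descending AR-triangles of $\mc_Q$ via Lemma~\ref{l:AR-triangles}(2) --- is the same as the paper's. You diverge in two places. First, the paper only descends the AR-triangles ending at the objects $(a,b)$ with $a>1$ (whose $\mc_Q$-AR-triangles already lie in ${}^\perp(\Sigma M)$) and leaves the meshes ending at the left-edge objects $(1,b)$ implicit; you construct them explicitly by exhibiting the connecting map $(1,b)\to\mathbb{G}^2(1,b)=(b,t-b)$, checking it does not factor through $\add M$, and computing its cocone $(1,b-1)\oplus(b+2,t-b-1)$. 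This actually fills a step the paper glosses over, and your cocone computation is right: over a hereditary algebra the cone of a module map is the suspension of the kernel plus the cokernel, and $\Sigma^{-1}=\tau^{-1}$ shifts the cokernel to $(b+2,t-b-1)$ in $\mc_Q$. Second, for the identification of the type the paper simply reads $A_{t-1}$ off the shape of the resulting translation quiver, whereas you split off $\mc_\circ$ via Lemma~\ref{l:AR-shape}(2) and exhibit the cluster-tilting object $(1,1)\oplus\cdots\oplus(1,t-1)$ with linearly oriented $A_{t-1}$ quiver; this buys a recognition-theorem-style justification at the cost of needing the completion of a rigid object to a cluster-tilting object and the maximality of $\bigoplus_i(1,i)$ inside the wing, both of which you handle correctly. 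Two caveats, shared with the paper's own proof: the degenerate case $t=2$ (the two surviving vertices form the AR quiver of $\mc_{A_1}$, which is not literally connected), and the case $t=d-1$, where $\Hom_{\mc_Q}((1,b),M)$ is two-dimensional, so one should note that the inclusion still generates it over $\End_{\mc_Q}(M)$ and hence remains the minimal left $\add M$-approximation.
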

\begin{proof}
For each $1\leq i\leq t-1$, we have the following triangle in ~$\mc_Q$
\[(1,i)\to (1,t)\to (i+1,t-i)\to \Sigma (1,i).
\]
By definition of the suspension functor ~$\mathbb{G}$ of $\mc$, one obtains $\mathbb{G}(1,i)=(i+1, t-i), 1\leq i\leq t-1$. On the other hand, for any $(a,b)\in \cw_{M}$ such that $1<a\leq t$ and $a+b\leq t+1$, we have the AR-triangle
\[(a-1, b)\to (a-1, b+1)\oplus (a, b-1)\to (a,b)\to \Sigma (a-1,b)
\]
 ending at $(a, b)$ in $\mc_Q$\footnote{If the second coordinate of an object $(a,b)$ is zero, then the object is defined to be the zero object.}. Note that all of $(a-1, b), (a-1,b+1), (a, b-1)$ and $(a,b)$ belong to $~^\perp(\Sigma M)$,
 it induces an AR-triangle in the subfactor $\mc$ and we have $\mathbb{G}(a, b)=(a-1, b)$ by Lemma~\ref{l:AR-triangles}~(2).
We conclude that the indecomposable objects in  $\cw_M/\add M$ form a finite connected AR component of $\mc$, which is the AR quiver of the cluster category of a type $A_{t-1}$ quiver.
\end{proof}

\begin{lemma}~\label{l:AR-tube-infinite}
Indecomposable objects in $\mt_{M,\mc}\backslash \add M$ form a connected AR component $\Gamma_{\bullet}$ of $\mc$.  Moreover, $\Gamma_{\bullet}$ is  a tube of rank $d-t$.
\end{lemma}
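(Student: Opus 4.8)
## Proof Proposal for Lemma~\ref{l:AR-tube-infinite}

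The plan is to identify $\mt_{M,\mc}$ with the indecomposable objects of a tube of rank $d-t$ by exhibiting explicitly the AR-triangles in the subfactor $\mc$. First I would fix the description $~^\perp(\Sigma M)\cap \mt_M=\mt_{M,\mc}\cup\cw_M$ from Lemma~\ref{l:compute-morphism}, and recall from Lemma~\ref{l:AR-tube-finite} that $\cw_M\backslash\add M$ already splits off as its own AR component (of type $A_{t-1}$); so the objects of $\mt_{M,\mc}\backslash\add M$ cannot be mixed with those, and I only need to understand AR-triangles internal to $\mt_{M,\mc}$. Concretely, with $M=(1,t)$ in a tube of rank $d$, the set $\mt_{M,\mc}$ consists of the indecomposables not lying on the corays ending at $(d,1),(1,1),\dots,(t-1,1)$ nor on the rays starting at $(2,1),\dots,(t+1,1)$.

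The key computational step is to apply Lemma~\ref{l:AR-triangles}~(2): an AR-triangle $X\to Y\to Z\to\Sigma X$ in $\mc_Q$ with all terms in $~^\perp(\Sigma M)$ descends to an AR-triangle $X\xrightarrow{\bar f}Y\xrightarrow{\bar g}Z\to\mathbb{G}X$ in $\mc$. For an object $(a,b)\in\mt_{M,\mc}$ with $b$ large enough (so that its defining AR-triangle in $\mc_Q$, namely $(a-1,b)\to (a-1,b+1)\oplus(a,b-1)\to (a,b)\to\Sigma(a-1,b)$, has all terms still in $\mt_{M,\mc}\cup\cw_M\subseteq{}^\perp(\Sigma M)$), this AR-triangle passes to $\mc$ unchanged and gives $\mathbb{G}(a,b)=(a-1,b)$. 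The boundary cases are the interesting ones: when $(a,b)$ sits just "above" the removed rays/corays, the object $(a,b-1)$ or $(a-1,b+1)$ in the middle term of the $\mc_Q$-AR-triangle either vanishes or falls into $L_M\cup R_M$, hence is not in $~^\perp(\Sigma M)$; there one must instead compute the left minimal $\add M$-approximation of the relevant $X$ and form the defining triangle for $\mathbb{G}$, as in the construction recalled after Theorem~\ref{t:subfactor}. Running this computation along the mouth — the new quasi-simple objects should turn out to be the $d-t$ objects obtained by "contracting the wing $\cw_M$ to a point", i.e. the images of $(t+1,1),(t+2,1),\dots,(d,1)$ together with the single object arising from the coray/ray collapse at $M$ — I expect to recover precisely a translation quiver of tube shape, and $\mathbb{G}^{\,d-t}$ to restrict to the identity on $\mt_{M,\mc}\backslash\add M$. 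Connectedness is then immediate since each $(a,b)$ is linked by an irreducible map (image of an irreducible map of $\mc_Q$, which stays irreducible by minimality of the approximations and the fact that $\add M$ is rigid) to $(a,b-1)$ or $(a,b+1)$, and one climbs up each ray to reach any other object.

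The main obstacle will be bookkeeping at the boundary of the removed region: I must check carefully that when the naive $\mc_Q$-AR-triangle fails to have all terms in $~^\perp(\Sigma M)$, the subfactor AR-triangle computed via the $\add M$-approximation has middle term again an object (or sum of objects) in $\mt_{M,\mc}$ — and in particular that no spurious irreducible maps to or from the wing component $\Gamma_\circ$ appear. This amounts to verifying $\Hom_{\mc_Q}(\cw_M,\mathbb{G}X)$ and $\Hom_{\mc_Q}(X,\Sigma\cdot)$ vanish appropriately, which follows from the standardness of tubes in $\mod kQ$ together with the $\mathcal M$-map/$\der$-map calculus recalled before Lemma~\ref{l:property-tame-clustercategory}; but one has to do it for each of the $t-1$ removed corays and $t$ removed rays. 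Once the AR-triangles are pinned down, identifying the resulting translation quiver as a tube of rank $d-t$ (rather than, say, $\Z A_\infty$) is a matter of checking the periodicity $\mathbb{G}^{\,d-t}=\mathrm{id}$ on this component, which I would read off directly from the explicit formula $\mathbb{G}(a,b)=(a-1,b)$ (valid away from the boundary) combined with the finitely many boundary identifications.
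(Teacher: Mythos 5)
Your outline matches the paper away from the boundary: both you and the authors use Lemma~\ref{l:AR-triangles}~(2) to push down the AR-triangles of $\mc_Q$ all of whose terms stay in $^\perp(\Sigma M)$, and both conclude the rank from the minimal period of $\mathbb{G}$ on the component. The gap is in your treatment of the boundary, which is where the entire content of the lemma lies. The paper's key step is to produce the \emph{new} irreducible morphisms of $\Gamma_\bullet$: if $(a,b)\to(a,b+1)\to\cdots\to(a,b+t+1)$ is a chain of irreducible maps whose two endpoints lie in $^\perp(\Sigma M)$ but whose $t$ interior terms do not, then the image of the composite is irreducible in $\mc$ (and dually along corays); this is proved by lifting a hypothetical factorization to $\mc_Q$, splitting it into its $\mathcal M$-part and $\der$-part, and contradicting standardness of the tube in $\mod kQ$. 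Your proposal never produces these maps. Computing the left minimal $\add M$-approximation triangle $X\to Z_X\to\mathbb{G}X\to\Sigma X$ only identifies the object $\mathbb{G}X$ (its middle term lies in $\add M$, hence vanishes in the quotient, so this triangle is certainly not an AR-triangle of the subfactor); it gives you the translate but none of the arrows. And your parenthetical claim that the irreducible maps of $\mc$ are ``images of irreducible maps of $\mc_Q$'' fails exactly where it is needed: the rays starting at $(2,1),\dots,(t+1,1)$ and the corays ending at $(d,1),(1,1),\dots,(t-1,1)$ are deleted \emph{in their entirety}, so the surviving rays on either side of a deleted block are joined in $\Gamma_\bullet$ only by the image of a length-$(t+1)$ composite. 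Without establishing that these composites are irreducible, neither connectedness of $\Gamma_\bullet$ nor its identification as a tube (rather than several components) is proved.

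Two smaller points. First, your description of the new mouth is incorrect: $(t+1,1)$ lies on a deleted ray and $(d,1)$ on a deleted coray, so neither survives, and your list has $d-t+1$ elements where the mouth of a rank $d-t$ tube must have $d-t$; the correct picture is that the surviving mouth objects $(t+2,1),\dots,(d-1,1)$ are supplemented by formerly non-quasi-simple objects (such as $(d,t+1)$, the object the paper uses to compute the period) that become quasi-simple after the deletion. Second, your worry about ``spurious irreducible maps to the wing component'' is the easy part — $\Hom$-vanishing between $\cw_M$ and $\mt_{M,\mc}$ in the quotient follows from Lemma~\ref{l:compute-morphism} — whereas the hard part, which your plan defers to ``bookkeeping,'' is precisely the irreducibility argument above.
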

\begin{proof}
Let $(a, b)\xrightarrow{f_0}(a,b+1)\xrightarrow{f_1}(a,b+2)\cdots\xrightarrow{f_t}(a, b+t+1)$ be a sequence of irreducible maps of $\mc_Q$ such that $(a, b), (a, b+t+1)\in ~^\perp(\Sigma M)$ and $(a, b+1),\cdots, (a,b+t)\not\in ~^\perp(\Sigma M)$.  Set $f=f_t\circ\cdots \circ f_0$. 
We claim that $\overline{f}$ is an irreducible morphism from $(a, b)$ to $(a, b+t+1)$ in $\mc$.

Without loss of generality, we may assume that $f$ is an $\mathcal{M}$-map.  Suppose that $\overline{f}$ is not irreducible.
There is a factorization of $\overline{f}=\overline{g}\circ \overline{h}$ in $\mc$ such that $\overline{g}:N\to (a,b+t+1)$ is not a retraction and $\overline{h}:(a,b)\to N$ is not a section. Recall that $\mc$ is the additive quotient of $^\perp(\Sigma M)$ with respect to $\add M$. The factorization $\overline{f}=\overline{g}\circ \overline{h}$ lifts to a factorization $f=\hat{g}\circ \hat{h}$ in $\mc_Q$, where $\hat{g}:N\oplus M^{\oplus r}\to (a,b+t+1)$ and $\hat{h}: (a,b)\to N\oplus  M^{\oplus r}$.  We may rewrite $\hat{g}=g_1+g_2$ and $\hat{h}=h_1+h_2$, where $g_1, h_1$ are $\mathcal{M}$-maps and $g_2,h_2$ are $\der$-maps.  Consequently, $f=g_1\circ h_1$ since $f$ is an $\mathcal{M}$-map.
We clearly know that $g_1:N\oplus M^{\oplus r}\to (a,b+t+1)$ is not a retraction and $h_1: (a,b)\to N\oplus M^{\oplus r}$ is not a section in $\mod kQ$.
By the assumption that $(a, b+1),\cdots, (a,b+t)\not\in ~^\perp(\Sigma M)$, we also have $(a,b+i)\not\in \add N\oplus  M^{\oplus r}$ for any $1\leq i\leq t$. 
Now the factorization $f=g_1\circ h_1$ contradicts the fact that each tube in $\mod kQ$ is standard. This completes the proof that $\overline{f}$ is an irreducible morphism in $\mc$.
Dually, if $(a,b)\xrightarrow{g_0} (a+1, b-1)\xrightarrow{g_1}(a+2, b-2)\cdots \xrightarrow{g_t}(a+t+1, b-t-1)$ is a sequence of irreducible maps of $\mc_Q$ such that $(a, b), (a+t+1, b-t-1)\in ~^\perp(\Sigma M)$ and $(a+1,b-1),\cdots, (a+t,b-t)\not\in ~^\perp(\Sigma M)$, then $\overline{g_t\circ\cdots\circ g_0}$ is an irreducible morphism from $(a,b)$ to $(a+t+1, b-t-1)$ in $\mc$. 

According to Lemma~\ref{l:AR-triangles}, we conclude that $\mt_{M,\mc}/\add M$ forms a connected AR component of $\mc$ with infinitely many indecomposable objects. Moreover, $d-t$ is the minimal positive integer such that $\mathbb{G}^{d-t}(d, t+1)\cong (d, t+1)$.  Consequently, $\mt_{M,\mc}/\add M$ is a rank $d-t$ tube of $\mc$ by the shape of AR quiver of a cluster category.
\end{proof}

The following lemma plays a key role in our investigation of dimension vectors of indecomposable $\tau$-rigid modules for cluster-tilted algebras of tame type.
\begin{lemma}~\label{l:key-lemma}
Let $\mc_Q$ be a cluster category of tame type and $M$ an indecomposable regular rigid object of $\mc_Q$. Denote by $\mc=~^\perp(\Sigma M)/\add M$ the subfactor of $\mc_Q$.  
\begin{itemize}
\item[(1)] Let $X$ be an indecomposable object in $\mc$. If $X$ is transjective in~$\mc_Q$, then $X$ is transjective in $\mc$. Consequently, an indecomposable $X\in\mc$ is transjective in $\mc$ if and only if $X$ is transjective in $\mc_Q$ or $X\in \cw_M/\add M$;
\item[(2)] Assume moreover that $q.l.(M)=1$. Then an indecomposable object $X\in \mc$ is regular in $\mc$ if and only if $X$ is regular in $\mc_Q$.
\end{itemize}
\end{lemma}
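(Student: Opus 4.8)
The plan is to read off the Auslander--Reiten components of $\mc$ from those of $\mc_Q$, using the component descriptions already available, and then to exploit that $\mc={}^\perp(\Sigma M)/\add M$ is an additive quotient of ${}^\perp(\Sigma M)\subseteq\mc_Q$. The only soft ingredient I will need is the inequality $\dim_k\Hom_{\mc}(N,N')\le\dim_k\Hom_{\mc_Q}(N,N')$ for indecomposables $N,N'\in{}^\perp(\Sigma M)$, which is immediate from the definition of the additive quotient. In particular, if $N$ is transjective in $\mc_Q$ then $N\notin\add M$ (as $M$ is indecomposable and regular), so $1\le\dim_k\Hom_\mc(N,N)\le\dim_k\Hom_{\mc_Q}(N,N)=1$, the last equality holding because every object of a transjective component of $\mc_Q$ has endomorphism algebra $k$ by the definition of transjectivity.

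Next I would assemble the list of AR components of $\mc$. By the remark preceding Lemma~\ref{l:AR-tube-finite}, every tube $\mt$ of $\mc_Q$ with $M\notin\mt$ survives in $\mc$ as a tube of the same rank; by Lemmas~\ref{l:AR-tube-finite} and~\ref{l:AR-tube-infinite}, the indecomposables of $\mt_M$ that survive in $\mc$ form the finite component $\Gamma_\circ$ (the AR quiver of a cluster category of type $A_{t-1}$, where $t=q.l.(M)$) together with the tube $\Gamma_\bullet$ of rank $d-t$, $d$ being the rank of $\mt_M$. Using now the identity ${}^\perp(\Sigma M)\cap\mt_M=\mt_{M,\mc}\cup\cw_M$ together with Lemma~\ref{l:property-tame-clustercategory}(2) (no nonzero morphisms between distinct tubes), every indecomposable of $\mc$ that is regular in $\mc_Q$ lies in one of these three families of components. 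The three families are pairwise disjoint (distinct tubes of $\mc_Q$ are disjoint, and $\Gamma_\bullet$ is infinite while $\Gamma_\circ$ is finite), so the remaining indecomposables of $\mc$ --- precisely those that are transjective in $\mc_Q$ --- constitute a union of complete AR components of $\mc$.

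For~(1), combining the last two paragraphs: if $X$ is an indecomposable of $\mc$ that is transjective in $\mc_Q$, then its AR component in $\mc$ consists entirely of objects transjective in $\mc_Q$, each of which has a $1$-dimensional endomorphism space in $\mc$; hence no object of that component is regular in $\mc$, i.e. $X$ is transjective in $\mc$. This is the first assertion. For the equivalence I would argue conversely: an indecomposable $X$ of $\mc$ lies in one of the four families (the three above, or the transjective-in-$\mc_Q$ one). If $X$ lies in a surviving tube $\mt\ne\mt_M$ or in $\Gamma_\bullet$, then $X$ is regular in $\mc$, because $\mc$ is a cluster category (Theorem~\ref{t:subfactor-cluster-category}) and the indecomposables lying in a tube of a cluster category are exactly its regular objects. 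If $X\in\Gamma_\circ$, then $\cw_M/\add M=\Gamma_\circ$ is by Lemma~\ref{l:AR-shape}(2) the AR quiver of a Dynkin cluster category, so $X$ is transjective in $\mc$. The remaining case was just treated. Hence $X$ is transjective in $\mc$ if and only if $X$ is transjective in $\mc_Q$ or $X\in\cw_M/\add M$.

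Part~(2) is then the specialization $t=q.l.(M)=1$: in this case $\cw_M=\{M\}\subseteq\add M$, so $\cw_M/\add M=\emptyset$ and $\Gamma_\circ$ disappears, and (1) reads ``$X$ is transjective in $\mc$ if and only if $X$ is transjective in $\mc_Q$''; negating this (for an indecomposable object of a cluster category, transjective is the exact opposite of regular) yields~(2). I expect the only genuine work to be the verification in the second paragraph that the three ``regular-in-$\mc_Q$'' families of components exhaust all regular-in-$\mc_Q$ indecomposables of $\mc$ --- this is where ${}^\perp(\Sigma M)\cap\mt_M=\mt_{M,\mc}\cup\cw_M$ and the absence of maps between distinct tubes enter --- while everything else is formal; I do not anticipate a serious obstacle beyond keeping the bookkeeping of components straight.
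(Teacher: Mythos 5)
Your proposal is correct and follows essentially the same route as the paper: both rest on the fact that the additive quotient can only decrease $\Hom$-dimensions, combined with the component analysis of Lemmas~\ref{l:AR-tube-finite} and~\ref{l:AR-tube-infinite} and the survival of the other tubes. The paper phrases part (1) as a proof by contradiction (a regular $X$ in $\mc$ forces an object $Y$ with $\dim_k\End_{\mc_Q}(Y)\geq 2$ in the same component, which must then sit in a tube containing $X$), whereas you classify the AR components of $\mc$ directly, but the ingredients and the logic are the same.
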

\begin{proof}
For the first statement, assume that $X$ is regular in $\mc$. By definition, there is an indecomposable object $Y\in \mc$ lying in the same AR component of $X$ such that $\dim_k\End_{\mc}(Y)\geq 2$. We clearly have $\dim_k\End_{\mc_Q}(Y)\geq 2$. In particular, $Y$ is a regular object in $\mc_Q$. Consequently, the AR component of $\mc_Q$ containing $Y$ is a tube $\mt_Y$. If $M\not\in \mt_Y$, then the tube $\mt_Y$ remains a tube in $\mc$, which implies that $X$ is regular in $\mc_Q$, a contradiction.
Now assume that $M\in \mt_Y$.
By Lemma~\ref{l:AR-tube-finite} and ~\ref{l:AR-tube-infinite}, we deduce that $X\in \mt_Y$ which also contradicts to that $X$ is transjective in $\mc_Q$.

If $q.l.(M)=1$, then the above discussion also implies that if $X\not\cong M$ is a regular object of $\mc_Q$ such that $X\in ~^\perp(\Sigma M)$, then $X$ is regular in $\mc$. Now the second statement follows from the first one.
\end{proof}

\begin{theorem}~\label{t:subfactor-tame-type}
Let $\mc_Q$ be a cluster category of tame type $(d_1,d_2,\cdots, d_r)$ and $M$  an indecomposable rigid object of $\mc_Q$. Denote by $\mc=~^\perp(\Sigma M)/\add M$. 
\begin{itemize}
\item[(1)] If $M$ is transjective in $\mc_Q$, then $\mc$ is a cluster category of finite type;
\item[(2)] If $M$ lies in the rank $d_l$ tube with $q.l.(M)=t<d_l-1$, then the cluster category  $\mc$ is the direct sum a cluster category of type $A_{t-1}$ and a cluster category of tame type $(d_1,\cdots, d_{l-1}, d_l-t, d_{l+1},\cdots, d_r)$. In particular, if $q.l.(M)=1$, then $\mc$ is a cluster category of tame type $(d_1,\cdots, d_{l-1}, d_l-1, d_{l+1},\cdots, d_r)$.
\end{itemize}
\end{theorem}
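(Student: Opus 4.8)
The strategy is to exploit Theorem~\ref{t:subfactor-cluster-category}, which already tells us that $\mc$ is a cluster category $\mc_R$ for some finite acyclic quiver $R$; it then remains only to determine the connected components of $R$ and their types, which I would do by reading off the Auslander--Reiten components of $\mc$ from Lemmas~\ref{l:AR-tube-finite}, \ref{l:AR-tube-infinite} and \ref{l:key-lemma}, together with a count of indecomposable summands of a cluster-tilting object. For part (1), suppose $M$ is transjective in $\mc_Q$. Then $\Sigma M$ is again transjective (the transjective component of $\mc_Q$ is $\Sigma$-stable), so Lemma~\ref{l:hom-vanish-set} applied to $\Sigma M$ with $l=0$ shows that $\{N\in\ind^{\operatorname{rig}}\mc_Q \mid \Hom_{\mc_Q}(N,\Sigma M)=0\}$ is a finite set. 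Every indecomposable rigid object of $\mc$ is the image of some indecomposable $N\in~^\perp(\Sigma M)$ with $N\not\cong M$, and such $N$ is rigid in $\mc_Q$ by Theorem~\ref{t:subfactor}(1); hence $\ind^{\operatorname{rig}}\mc$ injects into that finite set and is finite, so $\mc$ is a cluster category of finite type by Lemma~\ref{l:AR-shape}(1).

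For part (2), write $M=(1,t)$ in the rank $d_l$ tube $\mt_M$, so that $d_l-t\ge 2$. First I would list the Auslander--Reiten components of $\mc$. Every tube $\mt\ne\mt_M$ of $\mc_Q$ lies in $~^\perp(\Sigma M)$ by Lemma~\ref{l:property-tame-clustercategory}(2) and remains a tube of the same rank in $\mc$ by Lemma~\ref{l:AR-triangles}(2); the tube $\mt_M$ contributes, by Lemmas~\ref{l:AR-tube-finite} and \ref{l:AR-tube-infinite}, a finite component $\Gamma_\circ$ equal to the AR quiver of a cluster category of Dynkin type $A_{t-1}$, together with a tube $\Gamma_\bullet$ of rank $d_l-t$; and the transjective component of $\mc_Q$ contributes all the remaining components, whose objects are transjective in $\mc$ by Lemma~\ref{l:key-lemma}(1). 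In particular every regular component of $\mc$ is a tube, so no connected component of $R$ is wild. By Lemma~\ref{l:AR-shape}(2) applied to $\Gamma_\circ$ we obtain $\mc=\mc_\circ\oplus\mc_\bullet$ with $\mc_\circ$ a cluster category of Dynkin type $A_{t-1}$ and $\mc_\bullet$ a cluster category, say $\mc_\bullet\simeq\mc_{R'}$; here $\mc_\bullet$ carries all the tubes of $\mc$, namely one of rank $d_j$ for each $j\ne l$, one of rank $d_l-t$, and infinitely many homogeneous ones.

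It remains to identify $R'$. The number of indecomposable summands of a cluster-tilting object of $\mc$ equals $|R|=|Q|-1$, because by Theorem~\ref{t:subfactor-cluster-category} these correspond to cluster-tilting objects of $\mc_Q$ containing $M$, which have $|Q|$ summands; hence $|R'|=|Q|-t$. Write $R'=D_1\sqcup\cdots\sqcup D_a\sqcup E_1\sqcup\cdots\sqcup E_b$ with the $D_i$ Dynkin and the $E_j$ extended Dynkin (there being no wild components). For any connected extended Dynkin quiver $E$ whose non-homogeneous tubes have ranks $e_1,\dots,e_s$ one has $|E|=2+\sum_{i=1}^s(e_i-1)$; since the tubes of $\mc_\bullet$ are distributed among the $\mc_{E_j}$, summing gives $\sum_j(|E_j|-2)=\sum_{j\ne l}(d_j-1)+(d_l-t-1)=|Q|-2-t$, where we used $|Q|=2+\sum_{j=1}^r(d_j-1)$. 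Combining with $\sum_i|D_i|+\sum_j|E_j|=|R'|=|Q|-t$ gives $\sum_i|D_i|+2b=2$. Since $\mc_\bullet$ contains the tube $\Gamma_\bullet$, which must come from some $\mc_{E_j}$, we have $b\ge 1$, so $b=1$ and $a=0$. Thus $R'=E_1$ is a connected extended Dynkin quiver whose non-homogeneous tubes have ranks $d_1,\dots,d_{l-1},d_l-t,d_{l+1},\dots,d_r$, that is, $\mc_\bullet$ is a cluster category of tame type $(d_1,\dots,d_{l-1},d_l-t,d_{l+1},\dots,d_r)$; and when $t=1$ the factor $\mc_\circ$ is empty, so $\mc=\mc_\bullet$ is itself of this tame type.

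The step I expect to be the main obstacle is the complete and correct determination of the Auslander--Reiten components of $\mc$ — in particular verifying that the part of $\mc$ arising from the transjective component of $\mc_Q$ contributes only components consisting of objects transjective in $\mc$, so that the regular part of $\mc$ is exactly a disjoint union of tubes (this is what rules out a wild component of $R'$) — together with keeping the count of vertices and tube ranks exact in the final arithmetic. These component-level inputs are, however, precisely what Lemmas~\ref{l:AR-tube-finite}--\ref{l:key-lemma} supply, so the remaining work is the combinatorial count above.
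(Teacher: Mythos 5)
Your proof is correct, and while part (1) and the component-by-component analysis at the start of part (2) coincide with the paper's argument (same use of Lemmas~\ref{l:hom-vanish-set}, \ref{l:AR-shape}, \ref{l:AR-tube-finite}, \ref{l:AR-tube-infinite} and \ref{l:key-lemma}), you finish part (2) by a genuinely different route. The crux in both proofs is to show that the non-finite summand $\mc_\bullet$ of $\mc$ is \emph{connected}, i.e.\ a single cluster category of tame type. The paper argues by contradiction: if $\mc_\bullet=\mathcal{M}_1\oplus\mathcal{M}_2$ nontrivially, it picks a transjective $X\in\mathcal{M}_2$, shows via the approximation triangle $X\to M_X\to\mathbb{G}(X)\to\Sigma X$ that $\Hom_{\mc}(Y,\mathbb{G}(X))\cong\Hom_{\mc_Q}(Y,\Sigma X)$ for $Y\in\mc$, and derives infinitely many indecomposable rigid objects of $\mc_Q$ with no maps to $\Sigma X$, contradicting Lemma~\ref{l:hom-vanish-set}. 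You instead rule out extra components purely combinatorially, using the cluster-tilting rank $|R'|=|Q|-t$ together with the classical tubular-type identity $|E|=2+\sum_i(e_i-1)$ for a connected extended Dynkin quiver $E$; the arithmetic $\sum_i|D_i|+2b=2$ with $b\geq 1$ then forces a single extended Dynkin component and no Dynkin ones. Your count is exact (it correctly uses $d_l-t\geq 2$ so that $\Gamma_\bullet$ is genuinely non-homogeneous, and that homogeneous tubes do not enter the formula), and the exclusion of wild components via ``every regular component of $\mc$ is a tube'' is justified by Lemma~\ref{l:key-lemma}(1). What your approach buys is a shorter, more elementary finish that avoids any further Hom-space computation; what it costs is the importation of the vertex-count formula for tame tubular types, a standard fact about extended Dynkin quivers that the paper nowhere states, whereas the paper's argument is self-contained relative to its own Lemma~\ref{l:hom-vanish-set}.
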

\begin{proof}
The subfactor $\mc$ is a cluster category by Theorem~\ref{t:subfactor-cluster-category}.
Note that $M$ is transjective if and only if $\Sigma M=\tau M$ is transjective.
Then the first statement is a direct consequence of Lemma~\ref{l:hom-vanish-set}, Theorem~\ref{t:subfactor}~(1) and Lemma~\ref{l:AR-shape}~(1).

Let us consider the second statement.  By Lemma~\ref{l:AR-tube-finite},~\ref{l:AR-tube-infinite} and~\ref{l:AR-shape}~(2), we have a decomposition $\mc=\mc_1\oplus\mc_{A_{t-1}}$ as cluster categories, where $\mc_{A_{t-1}}\cong \cw_M/\add M$. It remains to show that $\mc_1$ is a cluster category of tame type $(d_1,\cdots, d_{l-1}, d_l-t, d_{l+1},\cdots, d_r)$. Again by Lemma~\ref{l:AR-tube-infinite} and Lemma~\ref{l:key-lemma}~(1), we deduce that the ranks of non-homogeneous tubes in $\mc$ are precisely $d_1,\cdots, d_{l-1}, d_l-t, d_{l+1}, \cdots, d_r$.
We have to show that $\mc_1$ can not be decomposed as a direct sum of two non-trivial cluster categories.

Otherwise, assume $\mc_1=\mathcal{M}_1\oplus\mathcal{M}_2$, where $\mathcal{M}_1, \mathcal{M}_2$ are cluster categories. Without loss of generality, we may assume that $\mathcal{M}_1$ is a cluster category of tame type. Let $X\in \mathcal{M}_2$ be a transjective object in $\mc$. By Lemma~\ref{l:key-lemma}~(1), we know that $X\in \mc_Q$ is also transjective in $\mc_Q$.  Note that $\mathcal{M}_1$ is a cluster category of tame type and $\Hom_{\mc}(Z, \mathbb{G}(X))=0$ for any $Z\in \mathcal{M}_1$.
In particular, the set $\{Y\in \ind^{\operatorname{rig}}\mc~|~\Hom_{\mc}(Y, \mathbb{G}(X))=0\}$ is infinite.

Now consider the minimal left $\add M$-approximation $X\xrightarrow{\alpha_X}M_X$ of $X$ and form the triangle
\[X\xrightarrow{\alpha_X}M_X\xrightarrow{\beta_X} \mathbb{G}(X)\to \Sigma X
\]
in $\mc_Q$, where $M_X\in \add M$. By the $2$-Calabi-Yau property of $\mc_Q$, we have \[\Hom_{\mc_Q}(M,\Sigma X)=D\Hom_{\mc_Q}(X, \Sigma M)=0.\] In particular, $\beta_X$ is a right $\add M$-approximation of $\mathbb{G}(X)$.
For $Y\in \mc$, applying the functor $\Hom_{\mc_Q}(Y, -)$ to the triangle above, we obtain a long exact sequence
\[\cdots\to\Hom_{\mc_Q}(Y, M_X)\to \Hom_{\mc_Q}(Y, \mathbb{G}(X))\to \Hom_{\mc_Q}(Y, \Sigma X)\to \Hom_{\mc_Q}(Y, \Sigma M_X)\to \cdots
\]
Recall that $Y\in \mc$ implies that $\Hom_{\mc_Q}(Y, \Sigma M_X)=0$. It follows that $\Hom_{\mc_Q}(Y, \Sigma X)\cong \Hom_\mc(Y, \mathbb{G}(X))$. In particular, if $Y\in \ind^{\operatorname{rig}}\mc$ such that $\Hom_{\mc}(Y, \mathbb{G}(X))=0$, then $Y\in \ind^{\operatorname{rig}}\mc_Q$ and $\Hom_{\mc_Q}(Y, \Sigma X)=0$. Consequently, the set $\{Y\in \ind^{\operatorname{rig}} \mc_Q~|~\Hom_{\mc_Q}(Y,\Sigma X)=0\}$ is infinite, which contradicts Lemma~\ref{l:hom-vanish-set}. This completes the proof.
\end{proof}

Let $\mc$ be a cluster category and $T$ a basic cluster-tilting object in $\mc$. If $T$ does not have a regular direct summand, then the cluster-tilted  algebra $\End_{\mc}(T)^{op}$ is called a {\it cluster-concealed algebra}.  We have the following immediately consequence of Theorem~\ref{t:subfactor-tame-type}.
\begin{corollary}
Let $\Gamma$ be a cluster-tilted algebra of tame type. Then $\Gamma$ is a cluster-concealed algebra if and only if for every primitive idempotent $e$, the algebra $\Gamma/\Gamma e\Gamma$ is a cluster-tilted algebra of finite type.
\end{corollary}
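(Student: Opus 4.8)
The plan is to translate both directions into a statement about the subfactors $~^\perp(\add\Sigma Z)/\add Z$ for $Z$ an indecomposable summand of $T$, and then read off the answer from Theorem~\ref{t:subfactor-tame-type}. Write $\Gamma=\End_{\mc_Q}(T)^{op}$ with $T=T_1\oplus\cdots\oplus T_n$, the $T_i$ indecomposable, and fix a primitive idempotent $e$ of $\Gamma$; the two-sided ideal $\Gamma e\Gamma$ depends only on the indecomposable summand $T_i$ with $\Gamma e\cong\Hom_{\mc_Q}(T,T_i)$. As observed in the proof of Theorem~\ref{t:subfactor-cluster-category} (using Theorem~\ref{t:cluster-tilted-algebras}(b) together with the cluster-tilting correspondence of Theorem~\ref{t:subfactor}(2)), the subfactor $\mc_i:=~^\perp(\add\Sigma T_i)/\add T_i$ is again a cluster category, $T/T_i$ is a basic cluster-tilting object of $\mc_i$, and
\[
\Gamma/\Gamma e\Gamma\;\cong\;\End_{\mc_i}(T/T_i)^{op}.
\]
In particular $\Gamma/\Gamma e\Gamma$ is automatically a cluster-tilted algebra, and it is of finite type exactly when $\mc_i$ is a cluster category of finite type. (One direction is the definition; for the other, if $\mc_i$ is not of finite type then by Lemma~\ref{l:AR-shape}(1) it has infinitely many indecomposable rigid objects, so by Lemma~\ref{l:bijection-tau-rigid-and-rigid} the algebra $\End_{\mc_i}(T/T_i)^{op}$ has infinitely many indecomposable $\tau$-rigid modules and hence is representation-infinite, whereas a cluster-tilted algebra of finite type is representation-finite.) Thus the corollary reduces to the assertion that $T$ has no indecomposable regular direct summand if and only if $\mc_i$ is of finite type for every $i$.

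For the forward implication, if $\Gamma$ is cluster-concealed then every $T_i$ is transjective in $\mc_Q$, so $\mc_i$ is a cluster category of finite type by Theorem~\ref{t:subfactor-tame-type}(1), and we are done by the reduction.

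For the converse I would argue by contraposition. Suppose $T$ has an indecomposable regular summand $N$; by Lemma~\ref{l:property-tame-clustercategory}(1) the tube $\mt_N$ containing it has rank $d\geq 2$, hence is non-homogeneous. Rather than invoke Theorem~\ref{t:subfactor-tame-type}(2), whose stated hypothesis $q.l.(N)<d-1$ can fail (for instance when $d=2$), I would directly exhibit infinitely many indecomposable objects of $\mc_N:=~^\perp(\add\Sigma N)/\add N$: pick any homogeneous tube $\mt_0$ of $\mc_Q$ (there are infinitely many); since $\mt_0\neq\mt_N$ we have $\mt_0\subseteq~^\perp(\add\Sigma N)$ by Lemma~\ref{l:property-tame-clustercategory}(2), and by Lemma~\ref{l:AR-triangles}(2) the AR-triangles of $\mt_0$ descend to AR-triangles of $\mc_N$, so $\mt_0$ persists as an infinite AR-component of $\mc_N$. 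Hence $\mc_N$ is not of finite type, and by the reduction $\Gamma/\Gamma e_N\Gamma$ is not a cluster-tilted algebra of finite type.

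The only mildly delicate point — the closest thing to an obstacle — is the parenthetical claim in the first paragraph that finiteness of the ambient cluster category is genuinely detected by the algebra; this is exactly what the representation-finiteness argument via Lemmas~\ref{l:AR-shape} and~\ref{l:bijection-tau-rigid-and-rigid} settles. Everything else is a direct application of Theorem~\ref{t:subfactor-tame-type}, with the homogeneous-tube argument inserted in the converse precisely to sidestep the edge cases not covered by the stated form of part~(2).
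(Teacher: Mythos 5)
Your proof is correct. The paper offers no argument at all for this corollary --- it is presented as an ``immediate consequence'' of Theorem~\ref{t:subfactor-tame-type} --- so your write-up supplies exactly the details the authors leave implicit: the identification $\Gamma/\Gamma e\Gamma\cong\End_{\mc_i}(T/T_i)^{op}$ via the subfactor $\mc_i$, and the observation that finiteness of $\mc_i$ is genuinely detected by the algebra (your representation-finiteness argument via Lemma~\ref{l:AR-shape}(1), Lemma~\ref{l:bijection-tau-rigid-and-rigid} and Theorem~\ref{t:cluster-tilted-algebras}(a) settles this cleanly). Your most valuable addition is the one the paper glosses over: as literally stated, Theorem~\ref{t:subfactor-tame-type}(2) only covers regular summands with $q.l.(N)=t<d_l-1$, so it says nothing about summands of maximal quasi-length $d_l-1$ (and covers no rigid summand at all in a rank-$2$ tube); the corollary is therefore not quite an immediate consequence of the theorem as written. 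Your substitute argument --- any homogeneous tube $\mt_0\neq\mt_N$ lies in $^\perp(\add\Sigma N)$ by Lemma~\ref{l:property-tame-clustercategory}(2) and survives into the subfactor, so $\mc_N$ has infinitely many indecomposables and cannot be of finite type --- handles all regular summands uniformly and closes this gap. (You could even skip the descent of AR-triangles here: the indecomposables of $\mt_0$ already remain nonzero and pairwise non-isomorphic in the additive quotient, which suffices.)
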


\section{Dimension vectors and mutations}~\label{S:dimension-vector}

\subsection{Exchange compatibility}
Let $\mc$ be a cluster category and $(X,X^*)$ an exchange pair of $\mc$. Recall that for the exchange pair $(X,X^*)$, we have two exchange triangles
\[X\xrightarrow{f}B\xrightarrow{g}X^*\to \Sigma X~\text{and}~X^*\xrightarrow{f'}B'\xrightarrow{g'}X\to \Sigma X^*.
\]
An indecomposable object $M\in \mc$ is {\it compatible with the exchange pair $(X,X^*)$}~\cite{BMR09}, if either $X\cong \Sigma M$ or $X^*\cong \Sigma M$, or
\[\dim_k\Hom_\mc(M,X)+\dim_k\Hom_\mc(M,X^*)=\max\{\dim_k\Hom_\mc(M,B), \dim_k\Hom_\mc(M,B')\}.
\]
If $M$ is compatible with every exchange pair $(X,X^*)$ of $\mc$, then $M$ is called {\it exchange compatible}.

Note that $(X,X^*)$ is an exchange pair if and only if $(\Sigma^2X, \Sigma^2X^*)$ is an exchange pair. By definition of compatible and the $2$-Calabi-Yau property, one clearly have the following result~({\it cf.}~\cite{AD})
\begin{lemma}~\label{l:AD}
An indecomposable rigid object $M\in\mc$ is compatible with $(\Sigma^2X, \Sigma^2X^*)$ if and only if either $\Sigma X\cong M$ or $\Sigma X^*\cong M$, or
\[\dim_k\Hom_\mc(X,M)+\dim_k\Hom_\mc(X^*,M)=\max\{\dim_k\Hom_\mc(B,M), \dim_k\Hom_\mc(B',M)\}.
\]
\end{lemma}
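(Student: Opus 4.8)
The plan is to translate the compatibility condition for $(\Sigma^2 X, \Sigma^2 X^*)$ into the condition stated in the lemma by a direct application of the $2$-Calabi-Yau duality $\Hom_\mc(A, B) \cong D\Hom_\mc(B, \Sigma^2 A)$, together with the observation that applying $\Sigma^2$ to the first exchange triangle for $(X, X^*)$ gives the first exchange triangle for $(\Sigma^2 X, \Sigma^2 X^*)$, namely $\Sigma^2 X \to \Sigma^2 B \to \Sigma^2 X^* \to \Sigma^3 X$, and similarly $\Sigma^2 X^* \to \Sigma^2 B' \to \Sigma^2 X \to \Sigma^3 X^*$ is the second. First I would write out, verbatim from the definition, what it means for the indecomposable rigid object $M$ to be compatible with the exchange pair $(\Sigma^2 X, \Sigma^2 X^*)$: either $\Sigma^2 X \cong \Sigma M$, or $\Sigma^2 X^* \cong \Sigma M$, or
\[
\dim_k\Hom_\mc(M, \Sigma^2 X) + \dim_k\Hom_\mc(M, \Sigma^2 X^*) = \max\{\dim_k\Hom_\mc(M, \Sigma^2 B),\ \dim_k\Hom_\mc(M, \Sigma^2 B')\}.
\]

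Next I would rewrite each term. Since $\Sigma$ is an autoequivalence, $\Sigma^2 X \cong \Sigma M$ is equivalent to $\Sigma X \cong M$, and likewise $\Sigma^2 X^* \cong \Sigma M \iff \Sigma X^* \cong M$; this disposes of the two exceptional alternatives. For the dimension equality, the $2$-Calabi-Yau isomorphism gives $\Hom_\mc(M, \Sigma^2 X) \cong D\Hom_\mc(X, M)$, and similarly $\Hom_\mc(M, \Sigma^2 X^*) \cong D\Hom_\mc(X^*, M)$, $\Hom_\mc(M, \Sigma^2 B) \cong D\Hom_\mc(B, M)$, $\Hom_\mc(M, \Sigma^2 B') \cong D\Hom_\mc(B', M)$. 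Since $D$ preserves dimension, the displayed equality becomes exactly
\[
\dim_k\Hom_\mc(X, M) + \dim_k\Hom_\mc(X^*, M) = \max\{\dim_k\Hom_\mc(B, M),\ \dim_k\Hom_\mc(B', M)\},
\]
which is the asserted condition. The only thing that needs a word of justification is that $(\Sigma^2 X, \Sigma^2 X^*)$ really is an exchange pair with exchange triangles obtained by applying $\Sigma^2$ to those of $(X, X^*)$, since $\Sigma^2$ is a triangle autoequivalence and carries minimal right $\add\overline{T}$-approximations to minimal right $\add\overline{T}$-approximations (using $\Sigma^2 \overline{T} \cong \overline{T}$ as $\Sigma^2$ fixes cluster-tilting objects in a $2$-Calabi-Yau category up to isomorphism — in fact in a cluster category $\tau = \Sigma$ so $\Sigma^2 = \tau^2$ and one checks it fixes the relevant summands); this is the remark preceding the lemma.

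I do not anticipate a genuine obstacle here: the statement is essentially a bookkeeping consequence of Serre duality, and the main point to be careful about is matching up the exceptional cases ($\Sigma X \cong M$ versus $X \cong \Sigma M$) correctly and confirming that $B, B'$ play symmetric roles under $\Sigma^2$ so that the maximum on the right-hand side is preserved. If anything requires attention it is verifying that the exchange triangles transform as claimed, but this is immediate from $\Sigma$ being an automorphism of the triangulated structure.
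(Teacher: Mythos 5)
Your proposal is correct and is exactly the argument the paper intends: the paper states Lemma~\ref{l:AD} without proof, remarking only that it follows ``by definition of compatible and the $2$-Calabi-Yau property,'' and your unwinding of the definition for $(\Sigma^2X,\Sigma^2X^*)$ followed by the Serre-duality substitutions $\dim_k\Hom_\mc(M,\Sigma^2 A)=\dim_k\Hom_\mc(A,M)$ is precisely that. One small caveat: your parenthetical justification that $\Sigma^2$ ``fixes cluster-tilting objects up to isomorphism'' is false in general (in a cluster category $\Sigma^2=\tau^2$ moves transjective summands), but it is also unneeded --- since $\Sigma^2$ is a triangle autoequivalence, it carries the cluster-tilting object $T=\overline{T}\oplus X$ to the cluster-tilting object $\Sigma^2\overline{T}\oplus\Sigma^2X$ and the exchange triangles of $(X,X^*)$ to those of $(\Sigma^2X,\Sigma^2X^*)$, with middle terms $\Sigma^2B$ and $\Sigma^2B'$, which is all the argument requires.
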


Let $(X,X^*)$ be an exchange pair. If both $X$ and $X^*$ are transjective, then we call the pair $(X,X^*)$  a {\it transjective exchange pair} and the corresponding mutation a {\it non-regular mutation}. The following has been proved in~\cite{BM10}.
\begin{lemma}~\label{l:non-regulr-mutation}
Let $Q$ be a finite acyclic quiver and $\mc_Q$ the associated cluster category.
 \begin{itemize}
 \item[(1)] If $(X,X^*)$ is a transjective exchange pair of $\mc_Q$, then each indecomposable rigid object of $\mc_Q$ is compatible with  $(X,X^*)$;
 \item[(2)] Let $T$ be a basic cluster-tilting object of $\mc_Q$. If $T$ has no regular direct summands, then $T$ can be obtained from $kQ$ by a series of non-regular mutations.
 \end{itemize}
\end{lemma}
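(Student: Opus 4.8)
The plan is to transport both assertions to the hereditary setting. Recall that $\mc_Q\cong\mc_{Q'}$ whenever $kQ'$ is derived equivalent to $kQ$, that the transjective objects of $\mc_Q$ are exactly those identified with $kQ'$-modules (or shifts of indecomposable projectives) under a suitable such equivalence, and that the transjective AR component is standard, so that morphisms between transjective objects are governed by the mesh relations together with the two extra summands recorded in Lemma~\ref{l:compute-morphism}. In each part I would choose a tame hereditary algebra $H=kQ'$ derived equivalent to $kQ$ adapted to the data at hand, and then argue inside $\mod H$.

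For part (1), fix a transjective exchange pair $(X,X^*)$ with exchange triangles
\[X\xrightarrow{a}B\xrightarrow{b}X^*\xrightarrow{h}\Sigma X\qquad\text{and}\qquad X^*\xrightarrow{a'}B'\xrightarrow{b'}X\xrightarrow{h'}\Sigma X^*,\]
and let $M$ be indecomposable rigid with $M\not\cong\Sigma X,\Sigma X^*$. Applying $\Hom_{\mc_Q}(-,M)$ to the first triangle and splitting each term by Lemma~\ref{l:compute-morphism} yields
\[\dim_k\Hom_{\mc_Q}(B,M)=\dim_k\Hom_{\mc_Q}(X,M)+\dim_k\Hom_{\mc_Q}(X^*,M)-\delta_B,\]
where $\delta_B$ is the total dimension of the images of the two connecting maps attached to $h$; similarly one has $\delta_{B'}$ attached to $h'$. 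The claim becomes $\min\{\delta_B,\delta_{B'}\}=0$. The three inputs I would use are: (i) $\End_{\mc_Q}(X)=\End_{\mc_Q}(X^*)=k$, since $X,X^*$ are indecomposable rigid; (ii) the $2$-Calabi--Yau duality, which identifies the connecting spaces for $h$ with the duals, after a shift, of those for $h'$; and (iii) the fact, standard for exchange pairs in $2$-Calabi--Yau categories, that the composite $X\xrightarrow{h'}\Sigma X^*\xrightarrow{\Sigma h}\Sigma^2X$ is a nonzero element of $\Hom_{\mc_Q}(X,\Sigma^2X)\cong D\End_{\mc_Q}(X)\cong k$ (and symmetrically). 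Together these force that at most one of $h,h'$ can be detected by a morphism into $M$, hence $\min\{\delta_B,\delta_{B'}\}=0$. Transjectivity of $X,X^*$ enters to realize $B,B'$ — and, after a further derived-equivalence reduction, all the relevant Hom-spaces — inside $\mod H$, where the exchange triangles become short exact sequences of $H$-modules and the connecting maps are the usual ones; a finite case analysis according to the position of $M$ relative to $X,X^*$ in the transjective component, and, when $M$ is regular, according to its quasi-length and tube via Lemma~\ref{l:property-tame-clustercategory}(2), completes the argument.

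For part (2), let $T$ be a basic cluster-tilting object with no regular direct summand. A single mutation between two cluster-tilting objects that are both transjective is automatically a non-regular mutation (the removed and the inserted summand are both transjective), so it suffices to connect $T$ to $kQ$ through transjective cluster-tilting objects. The finitely many indecomposable summands of $T$ occupy a bounded range in the transjective component $\mathbb{Z}Q$ of $\der^b(\mod kQ)$, so there is a complete slice lying entirely to their left; taking $H=kQ'$ to be the hereditary algebra having this slice as its projectives and using Lemma~\ref{l:cluster-vs-tilting}(a), we may assume $T$ is a preprojective tilting $H$-module. By classical hereditary tilting theory, a preprojective tilting module over $H$ is obtained from $H=\bigoplus_iP_i$ by a sequence of APR-tilts at sources, each of which is a mutation between preprojective — hence transjective — cluster-tilting objects, i.e. a non-regular mutation. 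Finally $H$ and the original $kQ$ correspond to two complete slices of $\mathbb{Z}Q$, and any two complete slices are connected by elementary reflections, which are again APR-type tilts, hence non-regular mutations. Concatenating the two chains exhibits $T$ as obtained from $kQ$ by non-regular mutations.

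The main obstacle is part (1): organizing the bookkeeping so that the nonvanishing of the composite exchange map genuinely forces $\min\{\delta_B,\delta_{B'}\}=0$ for every indecomposable rigid $M$. I expect this to require a genuine, if routine, case analysis — splitting according to whether $M$ is transjective or regular, and, in the transjective case, according to the relative location of $M$ and the pair $(X,X^*)$ in $\mathbb{Z}Q$ — whereas part (2) reduces cleanly to the connectedness of complete slices and of preprojective tilting modules, which is classical.
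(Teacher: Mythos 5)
First, a remark on the comparison itself: the paper offers no proof of this lemma --- it is imported verbatim from Buan--Marsh \cite{BM10} --- so there is no internal argument to measure your proposal against; I can only judge it on its own terms. Also note that you silently specialize to tame type (``choose a tame hereditary algebra $H$''), whereas the statement is for an arbitrary finite acyclic $Q$; this matters little for the paper's applications but should be flagged.

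For part (1) there is a genuine gap at the decisive step. Your reduction of compatibility to $\min\{\delta_B,\delta_{B'}\}=0$ is correct (modulo the fact that applying $\Hom_{\mc_Q}(-,M)$ proves compatibility with $(\Sigma^2X,\Sigma^2X^*)$ via Lemma~\ref{l:AD}, which is harmless since $\Sigma^2$ preserves transjective exchange pairs). But the three inputs you propose to force $\min\{\delta_B,\delta_{B'}\}=0$ --- triviality of $\End_{\mc_Q}(X)$ and $\End_{\mc_Q}(X^*)$, Serre duality between the two connecting spaces, and the nonvanishing of $\Sigma h\circ h'$ --- are properties shared by \emph{every} exchange pair whose members have one-dimensional endomorphism rings, including purely regular exchange pairs inside a tube. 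By Lemma~\ref{l:compatible-objects}, a rigid object of quasi-length $t-1$ in a rank-$t$ tube fails to be compatible with some exchange pair, and the witnessing pairs can be taken with both endomorphism rings equal to $k$; hence your inputs (i)--(iii) cannot imply the conclusion, and transjectivity must enter the argument substantively rather than as bookkeeping. The mechanism that actually works (and which the paper itself carries out in a special case inside the proof of Lemma~\ref{l:mutation-transjective-vs-regular}) is to choose $H$ so that $X$, $X^*$ and $B$ are all preinjective (or all preprojective) $H$-modules, so that one exchange triangle is induced by a short exact sequence of $H$-modules; one then shows via Lemma~\ref{l:compute-morphism} that the relevant connecting maps vanish, giving $\delta_B=0$ outright. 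Your sketch defers exactly this content to ``a finite case analysis,'' which is where the entire proof lives.

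Part (2) also has a gap, though a smaller one. The reduction to a preprojective tilting $H$-module via a complete slice to the left of $T$, and the observation that an exchange between two transjective cluster-tilting objects is automatically non-regular, are both fine. But the claim that every preprojective tilting module is obtained from $H$ by APR-tilts at sources is false: iterated APR-tilts at sources produce precisely the complete slices of the preprojective component, and for $|Q_0|\geq 3$ there are preprojective tilting modules that are not slices. What is needed is the connectedness of the exchange graph of transjective cluster-tilting objects under exchanges whose exchanged complements are both transjective; this is exactly the nontrivial content of \cite{BM10} (proved there by an induction, e.g.\ on the number of summands off a fixed slice) and cannot be replaced by reflection functors alone. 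As written, neither half of the proposal closes the argument.
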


For cluster categories of tame type, Buan, Marsh and Reiten~\cite{BMR09} obtained the following characterization of exchange compatible rigid objects.
\begin{lemma}~\label{l:compatible-objects}
Let $\mc$ be a cluster category of tame type and $N$ an indecomposable rigid object of $\mc$. Then $N$ is exchange compatible if and only if $\End_\mc(N)\cong k$ if and only if $N$ is transjective or $q.l.(N)\leq t-2$ when $N$ is regular in a tube of rank $t\geq 2$.
\end{lemma}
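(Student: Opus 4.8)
The plan is to prove the two equivalences separately: the equivalence with $\End_\mc(N)\cong k$ is a direct homological computation, while the equivalence with exchange compatibility carries the content and is proved by a case analysis on the exchange pair.

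\emph{Step 1: $\End_\mc(N)\cong k$ if and only if $N$ is transjective, or $N$ is regular in a rank-$t$ tube with $q.l.(N)\le t-2$.} If $N$ is transjective, pass via a derived equivalence to a tame hereditary algebra $kQ'$ over which $N$ becomes a preprojective or preinjective module; such a module lies in a directed component and is therefore a brick, and $\End_\mc(N)=\Hom_{kQ'}(N,N)=k$ by Lemma~\ref{l:compute-morphism}. If $N$ is regular in a tube $\mt$ of rank $t$, then Lemma~\ref{l:compute-morphism} gives $\End_\mc(N)\cong\End_{kQ}(N)\oplus D\Hom_{kQ}(N,\tau^2 N)$; the first summand is $k$ since $N$ is indecomposable in the standard abelian tube, while a path count in $\mt$ (as in the proof of Lemma~\ref{l:property-tame-clustercategory}~(4)) shows $\Hom_{kQ}(N,\tau^2 N)\neq 0$ precisely when $q.l.(N)=t-1$. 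As rigidity forces $q.l.(N)\le t-1$ in any case, this yields the claim.

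\emph{Step 2: a reformulation of compatibility.} Fix an exchange pair $(X,X^*)$ with exchange triangles $X\xrightarrow{f}B\xrightarrow{g}X^*\xrightarrow{w}\Sigma X$ and $X^*\xrightarrow{f'}B'\xrightarrow{g'}X\xrightarrow{w'}\Sigma X^*$, and assume $\Sigma N\notin\{X,X^*\}$. Applying $\Hom_\mc(N,-)$ to the first triangle and reading off dimensions in the long exact sequence gives
\[\dim_k\Hom_\mc(N,X)+\dim_k\Hom_\mc(N,X^*)-\dim_k\Hom_\mc(N,B)=\dim_k\im(\Sigma^{-1}w)_\ast+\dim_k\im w_\ast,\]
and an analogous identity with $B'$ and $w'$. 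Hence $N$ is compatible with $(X,X^*)$ exactly when at least one of the nonnegative integers $p:=\dim_k\im(\Sigma^{-1}w)_\ast+\dim_k\im w_\ast$ and $p':=\dim_k\im(\Sigma^{-1}w')_\ast+\dim_k\im w'_\ast$ vanishes. Since $\im w_\ast\subseteq\Hom_\mc(N,\Sigma X)\cong D\Hom_\mc(X,\Sigma N)$ by $2$-Calabi-Yau duality, the vanishing of these connecting maps is controlled by the relative position of $N$ and $X,X^*$.

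\emph{Step 3: $\End_\mc(N)\cong k$ implies $N$ is exchange compatible.} If $(X,X^*)$ is transjective, this is Lemma~\ref{l:non-regulr-mutation}~(1). Otherwise $X$ and $X^*$ are regular in a common tube $\mt$. If $N$ lies in a tube different from $\mt$, then by Lemma~\ref{l:property-tame-clustercategory}~(2) every term of both identities vanishes. If $N$ is transjective, or regular in $\mt$ with $q.l.(N)\le t-2$, one computes the four Hom-dimensions via Lemma~\ref{l:compute-morphism} together with the known shape of the exchange triangles of a tame cluster category, and checks that on one of the two triangles the connecting map already annihilates $\Hom_\mc(N,-)$; this is where the hypothesis $\End_\mc(N)\cong k$ — equivalently, that the wing of $N$ is ``small'' — is used, forcing the corresponding $p$ or $p'$ to be zero.

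\emph{Step 4: $\End_\mc(N)\ncong k$ implies $N$ is not exchange compatible, and the main obstacle.} Here $N$ is regular with $q.l.(N)=t-1$ in a rank-$t$ tube $\mt$, $t\ge 2$. Normalising $N=(1,t-1)$, complete a maximal rigid configuration inside the wing $\cw_N$ to a basic cluster-tilting object $T$ of $\mc_Q$ and mutate at a suitably chosen quasi-simple summand $X\in\cw_N$; using Lemma~\ref{l:property-tame-clustercategory}~(4) to evaluate the relevant Hom-dimensions (the quasi-simple $X$ detects the ``long'' object $N$ with multiplicity $2$) together with Lemma~\ref{l:AD}, one finds that $N$ is incompatible with $(\Sigma^2 X,\Sigma^2 X^*)$. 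The delicate point throughout Steps 3 and 4 is the precise description of the exchange triangles inside, and at the mouth of, a rank-$t$ tube and the accompanying bookkeeping of the $\mathcal{M}$-map and $\der$-map contributions to $\dim_k\Hom_\mc(N,-)$; in effect one is verifying a tropical additivity identity for the ``tube part'' of a cluster character, and the cutoff $q.l.(N)\le t-2$ versus $q.l.(N)=t-1$ is exactly what separates the two behaviours.
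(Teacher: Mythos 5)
The paper itself offers no proof of this lemma: it is quoted verbatim from Buan--Marsh--Reiten \cite{BMR09}, so there is no in-paper argument to compare yours against. Judged on its own terms, your Step 1 (the homological characterisation of $\End_\mc(N)\cong k$ via Lemma~\ref{l:compute-morphism} and a path count showing $\Hom_{kQ}(N,\tau^2N)\neq 0$ exactly when $q.l.(N)=t-1$) and Step 2 (the reformulation of compatibility as $\min\{p,p'\}=0$ via the two long exact sequences) are correct and complete; this is indeed the right skeleton, and it is essentially the strategy of \cite{BMR09}.

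However, Steps 3 and 4, which carry all of the actual content, are not proofs. First, Step 3 silently relies on the claim that a non-transjective exchange pair $(X,X^*)$ has both members regular in a common tube; this is a genuine structural fact about tame cluster categories that itself requires proof (it is one of the preparatory lemmas in \cite{BMR09}), and without it your case analysis is not exhaustive. Second, in the decisive case of Step 3 ($N$ transjective, or in the tube $\mt$ of the pair with $q.l.(N)\le t-2$) you write that ``one computes the four Hom-dimensions \ldots{} and checks'' that one of $p,p'$ vanishes --- but this computation, which requires an explicit description of the exchange triangles attached to a regular exchange pair (in particular, which of $B$, $B'$ carries the tube summands and which carries the transjective ones), is precisely the hard part and is omitted. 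The same applies to Step 4: exhibiting a specific cluster-tilting object $T$ containing the wing of $N=(1,t-1)$, identifying the exchange partner $X^*$ of the chosen quasi-simple $X$ and the middle terms $B,B'$, and verifying that $\dim_k\Hom_\mc(X,N)+\dim_k\Hom_\mc(X^*,N)>\max\{\dim_k\Hom_\mc(B,N),\dim_k\Hom_\mc(B',N)\}$ is asserted rather than done. As written, the proposal is a correct plan with the two pivotal verifications missing; to stand as a proof it would need either these computations carried out in full or an explicit appeal to \cite{BMR09}, which is what the paper does.
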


Let $\mc$ be a cluster category of tame type and $T$ a basic cluster-tilting object of $\mc$. We can always write $T$ as $T=T_{tr}\oplus R$, where $T_{tr}$ is transjective and $R$ is regular. The following result gives a sufficient condition on when two basic cluster-tilting objects can be obtained from each other by non-regular mutations.

\begin{lemma}~\label{l:tame-type-non-regular-mutation}
Let $\mc$ be a cluster category of tame type. Let $T=T_{tr}\oplus R$ and $T'=T'_{tr}\oplus R$ be two basic cluster-tilting objects of $\mc$, where $R$ is regular and $T_{tr}, T_{tr}'$ are transjective. Then the cluster-tilting object $T$ can be obtained from $T'$ by a series of non-regular mutations.
\end{lemma}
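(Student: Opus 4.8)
The plan is to reduce to the subfactor category obtained by killing the common regular summand $R$, where both $T$ and $T'$ become transjective cluster-tilting objects, and then to apply Lemma~\ref{l:non-regulr-mutation}~(2) together with the compatibility between mutations in $\mc$ and in the subfactor provided by the cluster-structure formalism of Section~\ref{S:cluster-tilting-theorey}. Concretely, write $R=R_1\oplus\cdots\oplus R_s$ and set $\mz=\add R$; this is a functorially finite rigid subcategory of $\mc$, so by Theorem~\ref{t:subfactor-cluster-category} the subfactor $\mc':={}^\perp(\add\Sigma R)/\add R$ is again a cluster category, and by Theorem~\ref{t:subfactor}~(2) there is a one-to-one correspondence between basic cluster-tilting objects of $\mc$ containing $R$ and basic cluster-tilting objects of $\mc'$. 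Under this correspondence $T=T_{tr}\oplus R$ and $T'=T'_{tr}\oplus R$ pass to basic cluster-tilting objects $\overline{T_{tr}}$ and $\overline{T'_{tr}}$ of $\mc'$.

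The first key step is to show that $\overline{T_{tr}}$ and $\overline{T'_{tr}}$ have no regular direct summands in $\mc'$. By Lemma~\ref{l:property-tame-clustercategory}~(3) we may assume (after adjusting inside each tube) that $R$ contains a quasi-simple summand in each tube it meets; more to the point, every indecomposable summand of $T_{tr}$ is transjective in $\mc$, hence transjective in $\mc'$ by Lemma~\ref{l:key-lemma}~(1)—here one iterates that lemma over the indecomposable summands $R_i$, using that the subfactor by a regular rigid object $M$ with $q.l.(M)=1$ preserves the transjective/regular dichotomy, and that a subfactor by a transjective object produces a finite-type category (Theorem~\ref{t:subfactor-tame-type}~(1)) in which the whole argument is either vacuous or follows from the finite-type results. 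Thus $\overline{T_{tr}}$, $\overline{T'_{tr}}$ are cluster-tilting objects of the cluster category $\mc'$ with no regular summands.

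The second key step is to invoke Lemma~\ref{l:non-regulr-mutation}~(2): since $\mc'$ is a cluster category, say $\mc'\cong\mc_{Q'}$, and $\overline{T_{tr}}$, $\overline{T'_{tr}}$ have no regular summands, each of them can be obtained from $kQ'$ by a series of non-regular mutations; reversing one sequence and concatenating, $\overline{T_{tr}}$ is obtained from $\overline{T'_{tr}}$ by a series of non-regular mutations in $\mc'$. The final step is to lift this mutation sequence back to $\mc$: by Theorem~\ref{t:cluster-tilted-algebras}~(c) and Theorem~\ref{t:cluster-structure} both $\mc$ and $\mc'$ admit cluster structures, and the bijection of Theorem~\ref{t:subfactor}~(2) is compatible with mutation at summands not belonging to $\mz$ (this is exactly how the cluster structure on the subfactor was produced in the proof of Theorem~\ref{t:subfactor-cluster-category}: mutating $\overline{T_{tr}}$ at a summand in $\mc'$ corresponds to mutating $T_{tr}\oplus R$ at the corresponding summand in $\mc$, leaving $R$ fixed). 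Hence the lifted sequence takes $T'=T'_{tr}\oplus R$ to $T=T_{tr}\oplus R$ in $\mc$, and each mutation is at a summand of $T_{tr}$; it remains to check that each such mutation is non-regular in $\mc$, i.e.\ that both members $X,X^*$ of the exchange pair are transjective in $\mc$. For $X$ this holds because it is a summand of a cluster-tilting object whose transjective part we are manipulating; for $X^*$ one uses that its image in $\mc'$ is transjective there (it is a summand of the mutated tilting object, which has no regular summands by the same iteration as in step one), and then Lemma~\ref{l:key-lemma}~(1) in the reverse direction—an indecomposable object transjective in $\mc'$ that does not lie in some $\cw_{R_i}/\add R_i$ is transjective in $\mc_Q$—together with the observation that summands of $X^*$ cannot lie in a wing $\cw_{R_i}$ since such wing objects are only finitely many and are detected by nonvanishing $\Hom$ into $\Sigma R_i$ in $\mc$, contradicting $X^*\in{}^\perp(\add\Sigma R)$.

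The main obstacle I expect is precisely this last verification that the lifted mutations are non-regular in $\mc$, i.e.\ controlling the image of the new summand $X^*$ under the subfactor correspondence. One must rule out that $X^*$, transjective in $\mc'$, secretly comes from the finite wing component $\cw_{R_i}/\add R_i$ of Lemma~\ref{l:AR-tube-finite}; but $X^*$ lies in ${}^\perp(\add\Sigma R)$ and is part of a cluster-tilting object of $\mc$ containing $R$, and by Proposition~\ref{p:property-of-tame-quiver}~(4)/Lemma~\ref{l:property-tame-clustercategory}~(3) a cluster-tilting object with $R$ regular as part of it cannot also contain a further regular summand in the same tube beyond the wing structure forced by $R$—so $X^*$ is genuinely transjective in $\mc_Q$, and the mutation is non-regular. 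Everything else is bookkeeping with the correspondences already set up in Sections~\ref{S:cluster-tilting-theorey} and~\ref{S:subfactor}.
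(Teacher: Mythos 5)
Your overall strategy---pass to the subfactor where $R$ is killed, apply Lemma~\ref{l:non-regulr-mutation}~(2) there, and lift the mutation sequence back---is the same as the paper's, but the paper executes it as an induction on $|R|$, peeling off one summand $S$ with $q.l.(S)=1$ at a time (such a summand exists by Lemma~\ref{l:property-tame-clustercategory}~(3)), precisely so that Lemma~\ref{l:key-lemma}~(2) applies at each step and the transjective/regular dichotomy is preserved on the nose. Your all-at-once quotient by $\add R$ creates the difficulty you yourself flag, and your resolution of it contains a genuine error: you claim that objects of a wing $\cw_{R_i}$ are ``detected by nonvanishing $\Hom$ into $\Sigma R_i$,'' contradicting membership in ${}^\perp(\add\Sigma R)$. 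This is backwards. By Lemma~\ref{l:AR-tube-finite} the objects of $\cw_{R_i}\setminus\add R_i$ that survive into the subfactor are exactly those lying in ${}^\perp(\Sigma R_i)$, and they form a finite-type component whose objects are transjective in the subfactor but regular in $\mc$. So membership in ${}^\perp(\add\Sigma R)$ does not by itself rule out that the new summand $X^*$ of a lifted mutation is such a wing object, and the key claim that every lifted mutation is non-regular in $\mc$ is not established by your argument.

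The statement you need is true, but proving it amounts to redoing the induction: because $R$ is the regular part of a cluster-tilting object, one can order its summands so that each is quasi-simple in the successive subfactor (Lemma~\ref{l:property-tame-clustercategory}~(3) applied in each subfactor, which is again a cluster category of tame type by Theorem~\ref{t:subfactor-tame-type}~(2)); since the wing of a quasi-simple is trivial, no finite-type wing components ever appear, and Lemma~\ref{l:key-lemma}~(2) then gives that transjective in ${}^\perp(\add\Sigma R)/\add R$ is equivalent to transjective in $\mc$. You gesture at this iteration in your first step but do not carry it out, relying instead on the flawed $\Hom$-vanishing argument; the parenthetical about subfactors by transjective objects is also a red herring here, since every summand of $R$ is regular. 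Once the peeling is set up as an induction, the base case $|R|=0$ is Lemma~\ref{l:non-regulr-mutation}~(2) together with Lemma~\ref{l:cluster-vs-tilting}~(b), and the lifting of mutations is immediate from the bijection of Theorem~\ref{t:subfactor-cluster-category}---which is exactly the paper's proof.
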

\begin{proof}
We proof this result by induction on the number $|R|$. If $|R|=0$, the result follows from Lemma~\ref{l:non-regulr-mutation}~(2) and Lemma~\ref{l:cluster-vs-tilting}~(b) directly. Now assume that the result holds for $|R|=t$ and we consider the case $|R|=t+1$. According to Lemma~\ref{l:property-tame-clustercategory}~(3), there is an indecomposable direct summand $S$ of $R$ such that $q.l.(S)=1$. We may rewrite $T=T_{tr}\oplus R'\oplus S$ and $T'=T'_{tr}\oplus R'\oplus S $.

Let us consider the subfactor $\mc':=~^\perp(\Sigma S)/\add S$ of $\mc$ determined by $S$, which is a cluster category by Theorem~\ref{l:property-tame-clustercategory}. Moreover, there is a one-to-one correspondence between the cluster-tilting objects of $\mc'$ and the cluster-tilting objects of $\mc$ contianing $S$ as a direct summand. In particular, $T_{tr}\oplus R'$ and $T'_{tr}\oplus R'$ are basic cluster-tilting objects of $\mc'$. By the induction, $T_{tr}\oplus R'$ can be obtained from $T'_{tr}\oplus R'$ by a series of non-regular mutations. The sequence of non-regular mutations in $\mc'$ lift to a sequence of non-regular mutations in $\mc$ by Lemma~\ref{l:key-lemma}~(2). This completes the proof.

\end{proof}

\subsection{The behavior of dimension vectors under mutation}
Let $\mc$ be a cluster category and $T=\bigoplus\limits_{i=1}^nT_i$ a basic cluster-tilting object of $\mc$ with indecomposable direct summands $T_1,\cdots, T_n$.
Let $\Gamma=\End_\mc(T)^{op}$ be the cluster-tilted algebra associated to $T$. Recall that for an object $M$, we have a $\Gamma$-module $M_\Gamma=\Hom_\mc(T,M)$. For  the dimension vector $\dimv M_\Gamma$, we have
\[\dimv M_\Gamma:=(\dim_k\Hom_\mc(T_1,M),\cdots, \dim_k\Hom_\mc(T_n,M))\in\Z^n.\]

Let $T'=\mu_{T_k}(T):=\overline{T}\oplus T_k^*$ be the mutation of $T$ at the indecomposable direct summand $T_k$. In particular, $(T_k, T_k^*)$ is an exchange pair. Denote by $\Gamma'=\End_\mc(T')^{op}$ the cluster-tilted algebra of $T'$. Note that $(\Sigma^2T_k,\Sigma^2T_k^*)$ is also an exchange pair.
\begin{proposition}~\label{p:dimension-vector-mutation}
Keep the notations as above. Assume moreover that each indecomposable rigid object of $\mc$ is compatible with $(\Sigma^2T_k, \Sigma^2T_k^*)$. If different indecomposable $\tau$-rigid $\Gamma'$-modules have different dimension vectors, then different indecomposable $\tau$-rigid $\Gamma$-modules have different dimension vectors.
\end{proposition}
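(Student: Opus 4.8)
The plan is to argue by contradiction. Suppose $M_\Gamma$ and $N_\Gamma$ are non-isomorphic indecomposable $\tau$-rigid $\Gamma$-modules with $\dimv M_\Gamma=\dimv N_\Gamma$; by Lemma~\ref{l:bijection-tau-rigid-and-rigid} this means $M$ and $N$ are non-isomorphic indecomposable rigid objects of $\mc$, neither in $\add\Sigma T$, and the goal is to reach $M\cong N$. The starting observation is that, labelling the vertex of $Q_{\Gamma'}$ attached to $T_k^*$ again by $k$, the dimension vectors with respect to $T$ and to $T'=\overline{T}\oplus T_k^*$ agree in every coordinate $i\neq k$ (there $T_i\in\add\overline{T}$ is a common summand, so the $i$-th coordinate is $\dim_k\Hom_\mc(T_i,-)$ for both), while the two $k$-th coordinates are $\dim_k\Hom_\mc(T_k,-)$ and $\dim_k\Hom_\mc(T_k^*,-)$. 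So the whole argument reduces to controlling the single number $\dim_k\Hom_\mc(T_k^*,-)$, and the point of the compatibility hypothesis will be to pin it down.

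First I would dispose of the exceptional object $\Sigma T_k^*$, which is $\tau$-rigid over $\Gamma$ but, lying in $\add\Sigma T'$, not over $\Gamma'$. If $M\cong\Sigma T_k^*$, then since $T'=\overline{T}\oplus T_k^*$ is rigid we have $\Hom_\mc(T_i,\Sigma T_k^*)=0$ for every $i\neq k$, so $\dimv M_\Gamma$ is concentrated at the vertex $k$; as $Q_\Gamma$ has no loop at $k$ (Theorem~\ref{t:cluster-tilted-algebras}(c)), the only indecomposable $\Gamma$-module supported at $k$ alone is the simple $S_k$, whence $M_\Gamma\cong S_k$. Running the identical computation on $N_\Gamma$, which has the same dimension vector, yields $N_\Gamma\cong S_k\cong M_\Gamma$, a contradiction. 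Hence we may assume $M,N\not\cong\Sigma T_k^*$.

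Then $M$ and $N$ are indecomposable, not in $\add\Sigma\overline{T}$, and isomorphic to neither $\Sigma T_k$ nor $\Sigma T_k^*$, hence not in $\add\Sigma T'$; so by Lemma~\ref{l:bijection-tau-rigid-and-rigid} they give indecomposable $\tau$-rigid $\Gamma'$-modules $M_{\Gamma'}$ and $N_{\Gamma'}$. Since $M$ is rigid, is isomorphic to neither $\Sigma T_k$ nor $\Sigma T_k^*$, and is compatible with $(\Sigma^2T_k,\Sigma^2T_k^*)$ by hypothesis, Lemma~\ref{l:AD} forces
\[\dim_k\Hom_\mc(T_k,M)+\dim_k\Hom_\mc(T_k^*,M)=\max\{\dim_k\Hom_\mc(B,M),\dim_k\Hom_\mc(B',M)\},\]
and the same identity holds with $N$ in place of $M$. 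Here $B,B'\in\add\overline{T}$ are fixed, depending only on $T_k$ through the exchange triangles, so $\dim_k\Hom_\mc(B,-)$ and $\dim_k\Hom_\mc(B',-)$ are fixed nonnegative integral combinations of the coordinates indexed by $i\neq k$; those coordinates coincide for $M$ and $N$, and the $k$-th coordinates $\dim_k\Hom_\mc(T_k,M)=\dim_k\Hom_\mc(T_k,N)$ coincide as well. Subtracting gives $\dim_k\Hom_\mc(T_k^*,M)=\dim_k\Hom_\mc(T_k^*,N)$, hence $\dimv M_{\Gamma'}=\dimv N_{\Gamma'}$. By the hypothesis applied to $\Gamma'$ we get $M_{\Gamma'}\cong N_{\Gamma'}$, so $M\cong N$ and therefore $M_\Gamma\cong N_\Gamma$, the desired contradiction.

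The step I expect to be the main obstacle is not the linear-algebra manipulation above but the mismatch between the two bijections of Lemma~\ref{l:bijection-tau-rigid-and-rigid}: the sets of indecomposable $\tau$-rigid modules over $\Gamma$ and over $\Gamma'$ do not correspond on the nose, since $\Sigma T_k^*$ is $\tau$-rigid only over $\Gamma$ while $\Sigma T_k$ is $\tau$-rigid only over $\Gamma'$. One must therefore be sure that no other indecomposable $\tau$-rigid $\Gamma$-module can share the dimension vector of $(\Sigma T_k^*)_\Gamma$, and the clean way to do this is to identify $(\Sigma T_k^*)_\Gamma$ with the simple module $S_k$ using only the rigidity of $T'$ together with the absence of loops at $k$, as above. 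The remaining points -- that $B,B'\in\add\overline{T}$ and are independent of $M$ and $N$, and the coordinate bookkeeping between $\dimv M_\Gamma$ and $\dimv M_{\Gamma'}$ -- are routine.
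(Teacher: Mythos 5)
Your proposal is correct and follows essentially the same route as the paper's proof: first excluding $M,N\cong\Sigma T_k^*$ (the paper likewise identifies $(\Sigma T_k^*)_\Gamma$ with the simple at vertex $k$), then applying Lemma~\ref{l:AD} with the compatibility hypothesis and using $B,B'\in\add\overline{T}$ to deduce $\dim_k\Hom_\mc(T_k^*,M)=\dim_k\Hom_\mc(T_k^*,N)$, hence $\dimv M_{\Gamma'}=\dimv N_{\Gamma'}$ and a contradiction. Your treatment of the exceptional case via the no-loops property is a slightly more explicit version of the same step.
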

\begin{proof}
Let $T_k\xrightarrow{f}B\xrightarrow{g}T_k^*\to \Sigma T_k$ and $T_k^*\xrightarrow{f'}B'\xrightarrow{g'}T_k\to \Sigma T_k^*$ be the exchange triangle associated to the pair $(T_k,T_k^*)$.

Suppose that there are two non-isomorphic indecomposable $\tau$-rigid $\Gamma$-modules $M_\Gamma$ and $N_\Gamma$ such that $\dimv M_\Gamma=\dimv N_\Gamma$, where $M\not\cong N\in \mc\backslash \add\Sigma T$ are indecomposable rigid objects. We first claim that $M\not\cong \Sigma T_k^*$ and $N\not\cong \Sigma T_k^*$. Otherwise, assume that $M\cong \Sigma T_k^*$.  One obtains that $M_\Gamma$ is the simple $\Gamma$-module with $\dimv M_\Gamma=e_k$, where $e_k$ is the $k$-th standard basis of $\Z^n$. Consequently, $\dimv N_\Gamma=e_k$ and $N_\Gamma\cong M_\Gamma$, a contradiction.

Since $M$ and $N$ are compatible with the exchange pair $(\Sigma^2T_k, \Sigma^2T_k^*)$ and $M,N\not\in \add\Sigma T$ , we have
\[\dim_k\Hom_\mc(T_k, M)+\dim_k\Hom_\mc(T_k^*,M)=\max\{\dim_k\Hom_\mc(B,M), \dim_k\Hom_\mc(B',M)\}
\]
and
\[\dim_k\Hom_\mc(T_k, N)+\dim_k\Hom_\mc(T_k^*,N)=\max\{\dim_k\Hom_\mc(B,N), \dim_k\Hom_\mc(B',N)\}
\]
by Lemma~\ref{l:AD}. On the other hand, by $B, B'\in \add \overline{T}$ and $\dimv M_\Gamma=\dimv N_\Gamma$, we get
\[\dim_k\Hom_\mc(B, M)=\dim_k\Hom_\mc(B,N)~\text{and}~\dim_k\Hom_\mc(B',M)=\dim_k\Hom_\mc(B',N).
\]
Consequently, $\dim_k\Hom_\mc(T_k^*,M)=\dim_k\Hom_\mc(T_k^*,N)$, which implies that $\dimv M_{\Gamma'}=\dimv N_{\Gamma'}$, a contradiction. Therefore different indecomposable $\tau$-rigid $\Gamma$-modules have different dimension vectors.
\end{proof}
The proof of the above proposition also implies the following result.
\begin{corollary}~\label{c:dimension-vector-equality}
Keep the notations as above. Let $M,N\in \mc\backslash \add\Sigma T$ be two non-isomorphic indecomposable rigid objects. Suppose that $M, N$ are compatible with the exchange pair $(\Sigma^2T_k,\Sigma^2T_k^*)$ and $\dimv M_\Gamma=\dimv N_\Gamma$. Then $M,N\not\in \add \Sigma T'$ and $\dimv M_{\Gamma'}=\dimv N_{\Gamma'}$.
\end{corollary}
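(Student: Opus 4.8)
The plan is to run exactly the computation inside the proof of Proposition~\ref{p:dimension-vector-mutation}, but now reading off the two asserted conclusions rather than deriving a contradiction. First I would establish $M, N \notin \add\Sigma T'$. Since $T' = \overline{T}\oplus T_k^*$, every indecomposable summand of $\Sigma T'$ is either a summand of $\Sigma\overline{T}$ (hence of $\Sigma T$) or equal to $\Sigma T_k^*$; as $M, N \notin \add\Sigma T$, it is enough to exclude $M \cong \Sigma T_k^*$ and $N \cong \Sigma T_k^*$. This is done exactly as in the proof of Proposition~\ref{p:dimension-vector-mutation}: if $M \cong \Sigma T_k^*$ then $M_\Gamma$ is the simple $\Gamma$-module at vertex $k$, so $\dimv M_\Gamma = e_k$; then $\dimv N_\Gamma = e_k$ forces $N_\Gamma$ to be simple at $k$, whence $N_\Gamma \cong M_\Gamma$ and, by Theorem~\ref{t:cluster-tilted-algebras}~(a), $N \cong M$, contradicting $M \not\cong N$. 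In particular $M_{\Gamma'} = \Hom_\mc(T', M)$ and $N_{\Gamma'} = \Hom_\mc(T', N)$ are well-defined (indecomposable $\tau$-rigid) $\Gamma'$-modules.

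Next I would compare $\dimv M_{\Gamma'}$ and $\dimv N_{\Gamma'}$ componentwise. For $T' = \overline{T}\oplus T_k^*$, the entry of $\dimv M_{\Gamma'}$ at a vertex $i \neq k$ is $\dim_k\Hom_\mc(T_i, M)$, which is the $i$-th entry of $\dimv M_\Gamma$ since $T_i$ is a summand of $\overline{T}\subseteq T$; likewise for $N$. Hence these entries agree because $\dimv M_\Gamma = \dimv N_\Gamma$. It remains to treat the $k$-th entry, i.e. to show $\dim_k\Hom_\mc(T_k^*, M) = \dim_k\Hom_\mc(T_k^*, N)$. Here I invoke Lemma~\ref{l:AD}: since $M \notin \add\Sigma T$ we have $M \not\cong \Sigma T_k$, and we have just shown $M \not\cong \Sigma T_k^*$, so the degenerate cases are ruled out and
\[
\dim_k\Hom_\mc(T_k, M) + \dim_k\Hom_\mc(T_k^*, M) = \max\{\dim_k\Hom_\mc(B, M),\ \dim_k\Hom_\mc(B', M)\},
\]
together with the analogous identity for $N$. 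Since $B, B' \in \add\overline{T}$ and $\dimv M_\Gamma = \dimv N_\Gamma$, the right-hand sides for $M$ and $N$ coincide, and $\dim_k\Hom_\mc(T_k, M) = \dim_k\Hom_\mc(T_k, N)$ is the $k$-th entry of $\dimv M_\Gamma = \dimv N_\Gamma$; subtracting gives $\dim_k\Hom_\mc(T_k^*, M) = \dim_k\Hom_\mc(T_k^*, N)$, so $\dimv M_{\Gamma'} = \dimv N_{\Gamma'}$.

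The only point needing care is the first step, where the hypothesis $\dimv M_\Gamma = \dimv N_\Gamma$ is used to transport the ``simple module'' obstruction from $M$ to $N$ and thereby guarantee $M, N \notin \add\Sigma T'$; once this is in place, the rest is the bookkeeping already carried out in the proof of Proposition~\ref{p:dimension-vector-mutation}, now yielding equality of dimension vectors rather than a contradiction.
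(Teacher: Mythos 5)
Your proposal is correct and follows the paper's own route: the paper proves this corollary simply by observing that the argument inside Proposition~\ref{p:dimension-vector-mutation} (excluding $M,N\cong\Sigma T_k^*$ via the simple-module/dimension-vector $e_k$ argument, then using Lemma~\ref{l:AD} together with $B,B'\in\add\overline{T}$ to equate the $k$-th entries) yields exactly these two conclusions. Your write-up just makes that extraction explicit, so there is nothing to add.
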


Let $Q$ be a finite acyclic quiver and $\mc_Q$ the associated cluster category.  It is well-known that rigid $kQ$-modules are determined by their dimension vectors ({\it cf.}~\cite{CB}). In this case, the indecomposable $\tau$-rigid $kQ$-modules coincide with the indecomposable rigid $kQ$-modules.
Hence  indecomposable $\tau$-rigid $kQ$-modules are determined by their dimension vectors. Let $T\in \mc_Q$ be a basic cluster-tilting object without regular direct summands. It follows from Lemma~\ref{l:non-regulr-mutation}~(2) that $T$ can be obtained from the cluster-tilting object $kQ$ by a series of non-regular mutations. On the other hand, if $(X,X^*)$ is a transjective exchange pair in $\mc_Q$, then $(\Sigma^2X, \Sigma^2X^*)$ is also transjective. Moreover, each indecomposable rigid object in $\mc_Q$ is compatible with $(\Sigma^2X, \Sigma^2X^*)$ by Lemma~\ref{l:non-regulr-mutation}~(1). Therefore by Proposition~\ref{p:dimension-vector-mutation} and induction on the length of mutations, we have proved the main result of ~\cite{AD}~({\it cf.} also~\cite{G}).
\begin{theorem}~\label{t:cluster-concealed}
Let $\Gamma$ be a cluster-concealed algebra, then indecomposable $\tau$-rigid $\Gamma$-modules are determined by their dimension vectors.
\end{theorem}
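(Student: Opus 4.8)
The plan is to propagate the ``distinct dimension vectors'' property from the hereditary case along a sequence of mutations. Write $\Gamma=\End_{\mc_Q}(T)\op$ for a finite acyclic quiver $Q$ and a basic cluster-tilting object $T\in\mc_Q$ without regular direct summands, and decompose $T=\bigoplus_i T_i$ into indecomposables. By Lemma~\ref{l:bijection-tau-rigid-and-rigid} the indecomposable $\tau$-rigid $\Gamma$-modules are exactly the $M_\Gamma=\Hom_{\mc_Q}(T,M)$ with $M$ an indecomposable rigid object of $\mc_Q\backslash\add\Sigma T$, and $\dimv M_\Gamma=(\dim_k\Hom_{\mc_Q}(T_i,M))_i$. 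The base case is $T=kQ$: here the indecomposable $\tau$-rigid $kQ$-modules coincide with the indecomposable rigid $kQ$-modules, and the latter are determined by their dimension vectors by~\cite{CB}.

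First I would invoke Lemma~\ref{l:non-regulr-mutation}(2) to obtain basic cluster-tilting objects $kQ=T_0,T_1,\dots,T_m=T$ with $T_l=\mu_{S_l}(T_{l-1})$ for an indecomposable direct summand $S_l$ of $T_{l-1}$, each step being a non-regular mutation; set $\Gamma_l=\End_{\mc_Q}(T_l)\op$. By definition of a non-regular mutation the exchange pair realizing $T_{l-1}\to T_l$ is transjective, hence so is the pair $(\Sigma^2 S_l,\Sigma^2 S_l^*)$ (in either order), and therefore every indecomposable rigid object of $\mc_Q$ is compatible with it by Lemma~\ref{l:non-regulr-mutation}(1).

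Then I would run the induction on $l$ in the upward direction. Reading the $l$-th step backwards we have $T_{l-1}=\mu_{S_l^*}(T_l)$, with associated exchange pair $(S_l^*,S_l)$ of $T_l$; applying Proposition~\ref{p:dimension-vector-mutation} with its $(T,\Gamma)$ taken to be $(T_l,\Gamma_l)$ and its $(T',\Gamma')$ taken to be $(T_{l-1},\Gamma_{l-1})$, the compatibility noted above is precisely the hypothesis required. The proposition then yields: if different indecomposable $\tau$-rigid $\Gamma_{l-1}$-modules have different dimension vectors, then so do different indecomposable $\tau$-rigid $\Gamma_l$-modules. Since $\Gamma_0=kQ$ has this property by the base case, induction gives it for $\Gamma_m=\Gamma$.

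The proof is short once the two inputs are in place, and all the substantive work has already been done: Lemma~\ref{l:non-regulr-mutation} supplies both the reachability of $T$ from $kQ$ through non-regular mutations and the compatibility of every rigid object with a transjective exchange pair, while Proposition~\ref{p:dimension-vector-mutation} provides the one-step transfer of the property. The only point that needs care is matching the direction of the induction to that of Proposition~\ref{p:dimension-vector-mutation}: it transports the property from the algebra closer to $kQ$ to the one obtained from it by a further mutation, which is exactly the direction needed when the induction is anchored at the hereditary algebra $kQ$.
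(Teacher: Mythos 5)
Your proof is correct and follows essentially the same route as the paper: base case at the hereditary algebra $kQ$ via Crawley-Boevey, reachability by non-regular mutations from Lemma~\ref{l:non-regulr-mutation}(2), compatibility of all rigid objects with the (shifted) transjective exchange pairs from Lemma~\ref{l:non-regulr-mutation}(1), and the one-step transfer via Proposition~\ref{p:dimension-vector-mutation}. Your explicit remark about reading each mutation step backwards so that the proposition's transfer direction matches the induction anchored at $kQ$ is exactly the point the paper leaves implicit, and it is handled correctly since compatibility with an exchange pair is symmetric in the two members of the pair.
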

\subsection{Dimension vectors for cluster-tilted algebras of tame type}

\begin{lemma}~\label{l:factor-cluster-concealed-algebra}
Let $\mc$ be a cluster category of tame type and $T=T_{tr}\oplus R$ a basic cluster-tilting object of $\mc$, where $T_{tr}$ is transjective and $R$ is regular. Denote by $\Gamma=\End_\mc(T)^{op}$ the cluster-tilted algebra of $T$ and $e_R$ the idempotent of $\Gamma$ associated to $R$. Then the factor algebra $\Gamma':=\Gamma/\Gamma e_R\Gamma$ is a cluster-concealed algebra.
\end{lemma}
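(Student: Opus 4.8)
The plan is to induct on the number $|R|$ of pairwise non-isomorphic indecomposable direct summands of the regular part $R$. The base case $|R|=0$ is immediate: then $T=T_{tr}$ has no regular direct summand, $e_R=0$, and $\Gamma'=\Gamma=\End_\mc(T_{tr})^{op}$ is a cluster-concealed algebra by definition.

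For the inductive step, assume $|R|\ge 1$. By Lemma~\ref{l:property-tame-clustercategory}~(3), $T$ has an indecomposable regular direct summand $S$ with $q.l.(S)=1$; write $R=R_0\oplus S$. I would pass to the subfactor $\mc':={}^\perp(\Sigma S)/\add S$. Since $q.l.(S)=1$, Theorem~\ref{t:subfactor-tame-type}~(2) shows $\mc'$ is again a cluster category of tame type, and Theorem~\ref{t:subfactor-cluster-category} shows $T_{tr}\oplus R_0$ --- which lies in ${}^\perp(\Sigma S)$ because $T$ is rigid --- is a basic cluster-tilting object of $\mc'$. Now the choice $q.l.(S)=1$ lets us apply Lemma~\ref{l:key-lemma}: part~(1) gives that every indecomposable summand of $T_{tr}$ is transjective in $\mc'$, and part~(2) gives that every indecomposable summand of $R_0$ is regular in $\mc'$. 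Hence $T_{tr}\oplus R_0$ is a basic cluster-tilting object of the tame cluster category $\mc'$ with transjective part $T_{tr}$ and regular part $R_0$, and $|R_0|=|R|-1$; the induction hypothesis then tells us that $\End_{\mc'}(T_{tr}\oplus R_0)^{op}$ modulo the ideal generated by the idempotent corresponding to $R_0$ is a cluster-concealed algebra.

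It remains to identify this quotient with $\Gamma'$. As in the proof of Theorem~\ref{t:subfactor-cluster-category} (together with Theorem~\ref{t:cluster-tilted-algebras}~(b)), there is a canonical isomorphism $\End_{\mc'}(T_{tr}\oplus R_0)^{op}\cong\Gamma/\Gamma e_S\Gamma$ under which the idempotent associated to $R_0$ corresponds to the image $\bar e_{R_0}$ of $e_{R_0}$. Since $e_S$ and $e_{R_0}$ are orthogonal idempotents with $e_S+e_{R_0}=e_R$, one has $\Gamma e_S\Gamma+\Gamma e_{R_0}\Gamma=\Gamma e_R\Gamma$, so modding $\Gamma/\Gamma e_S\Gamma$ out by the ideal generated by $\bar e_{R_0}$ yields $\Gamma/\Gamma e_R\Gamma=\Gamma'$. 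Combined with the previous paragraph, $\Gamma'$ is cluster-concealed, which closes the induction.

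The one step that needs real care is the passage to $\mc'$: it is essential to choose the regular summand $S$ of quasi-length $1$, because Lemma~\ref{l:key-lemma}~(2) --- that regular objects of $\mc$ stay regular in ${}^\perp(\Sigma S)/\add S$ --- genuinely fails for rigid regular objects of larger quasi-length (their wing becomes a transjective $A$-type component, cf.\ Lemma~\ref{l:AR-tube-finite}), so without it one could not ensure that $R_0$ remains the regular part of $T_{tr}\oplus R_0$ in $\mc'$ and the induction would not go through. The rest --- that the idempotent quotients compose as asserted and that the transjective part is unchanged --- is routine bookkeeping.
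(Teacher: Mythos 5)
Your proposal is correct and follows essentially the same route as the paper: induct on $|R|$, peel off a regular summand of quasi-length $1$ (Lemma~\ref{l:property-tame-clustercategory}~(3)), pass to the subfactor where Lemma~\ref{l:key-lemma} guarantees the transjective/regular decomposition is preserved, and identify $\Gamma'$ with the corresponding quotient of the smaller cluster-tilted algebra. Your explicit bookkeeping with the idempotents $e_S$, $e_{R_0}$ fills in the step the paper compresses to ``$\Gamma'\cong A'$,'' and your remark on why quasi-length $1$ is essential matches the paper's (implicit) reason for that choice.
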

\begin{proof}
We prove this result by induction on $|R|$. It is clear that the result holds for $|R|=0$. For $|R|\geq 1$, it follows from Lemma~\ref{l:property-tame-clustercategory}~(3)~that there is an indecomposable direct summand, say $R_1$, of $R$ with $q.l.(R_1)=1$. Let $\mc_1=~^\perp(\Sigma R_1)/\add R_1$ be the subfactor determined by $R_1$. We may rewrite $R$ as $R=R_1\oplus R'$ and then $T_{tr}\oplus R'$ is a basic cluster-tilting object in $\mc_1$ with $|R'|=|R|-1$. Again by Lemma~\ref{l:key-lemma}~(2), we know that $T_{tr}$ is a transjective object in $\mc_1$ and $R'$ is a regular object in $\mc_1$.
 Denote by $A=\End_{\mc_1}(T_{tr}\oplus R')^{op}$ and $A'=A/A e_{R'}A$ the factor algebra of $A$ by the ideal generated by $e_{R'}$, where $e_{R'}$ is the idempotent of $A$ associated to $R'$. By induction, we know that $A'$ is a cluster-concealed algebra and we obtain the desired result by noting that $\Gamma'\cong A'$.
\end{proof}

Now we are in the position to state the main result of this section.
\begin{theorem}~\label{t:main-theorem-dimension-vector}
Let $\Gamma$ be a cluster-tilted algebra of tame type. If $X$ and $Y$ are indecomposable $\tau$-rigid $\Gamma$-modules with $\dimv X=\dimv Y$, then $X$ and $Y$ are isomorphic.
\end{theorem}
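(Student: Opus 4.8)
The plan is to reduce the general cluster-tilted algebra of tame type to a cluster-concealed algebra by peeling off the regular summands of the cluster-tilting object one at a time, using the subfactor construction from Section~\ref{S:subfactor}. Write $\Gamma=\End_\mc(T)^{op}$ where $\mc=\mc_Q$ is a cluster category of tame type and $T=T_{tr}\oplus R$ with $T_{tr}$ transjective and $R$ regular. Suppose $X$ and $Y$ are indecomposable $\tau$-rigid $\Gamma$-modules with $\dimv X=\dimv Y$; lift them via Lemma~\ref{l:bijection-tau-rigid-and-rigid} to indecomposable rigid objects $\widetilde X,\widetilde Y\in\mc\setminus\add\Sigma T$. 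The first step is to handle the case $R=0$, i.e.\ $\Gamma$ cluster-concealed: this is exactly Theorem~\ref{t:cluster-concealed}, so $\widetilde X\cong\widetilde Y$ and hence $X\cong Y$.

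Now I would argue by induction on $|R|$. If $|R|\geq 1$, by Lemma~\ref{l:property-tame-clustercategory}~(3) there is an indecomposable direct summand $S$ of $R$ with $q.l.(S)=1$, so $S$ is in particular exchange compatible by Lemma~\ref{l:compatible-objects} (since $q.l.(S)=1\leq t-2$ fails only when $t\le 3$, so one should be slightly careful here — see below). Form the subfactor $\mc':=~^\perp(\Sigma S)/\add S$, which is again a cluster category by Theorem~\ref{t:subfactor-cluster-category}, and in which $T/S$ is a basic cluster-tilting object whose endomorphism algebra is $\Gamma/\Gamma e_S\Gamma$. The key point is to transport the equality of dimension vectors: I need to know that $\widetilde X$ and $\widetilde Y$ lie in $~^\perp(\Sigma S)$ and do not become isomorphic or trivial in the passage to $\mc'$, and that their dimension vectors over $\Gamma/\Gamma e_S\Gamma$ still coincide. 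The dimension vector of a $\tau$-rigid module over $\Gamma/\Gamma e_S\Gamma$ is obtained from that over $\Gamma$ by deleting the coordinate corresponding to $S$, so the equality $\dimv X=\dimv Y$ over $\Gamma$ forces equality of the truncated dimension vectors — provided both $\widetilde X,\widetilde Y\in~^\perp(\Sigma S)$.

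This is where the real work lies, and it is the main obstacle: an arbitrary indecomposable $\tau$-rigid $\Gamma$-module need not avoid $\add\Sigma S$ once we pass to the subfactor, i.e.\ $\widetilde X$ might fail to lie in $~^\perp(\Sigma S)$, or might equal $S$ up to the quotient. The resolution I envisage is a case analysis on whether $\widetilde X$ (equivalently $\widetilde Y$) is transjective or regular in $\mc$, using Lemma~\ref{l:key-lemma} to track how transjective/regular status is preserved in $\mc'$, together with Lemma~\ref{l:compatible-objects} and Lemma~\ref{l:AD}/Corollary~\ref{c:dimension-vector-equality} to reduce to summands that are exchange compatible. For the regular $\widetilde X$ not in the wing $\cw_S$, Lemmas~\ref{l:AR-tube-finite} and~\ref{l:AR-tube-infinite} describe exactly where it lands in $\mc'$; for $\widetilde X$ in the tube $\mt_S$ but inside $\cw_S$ one must use that $q.l.(S)=1$ keeps $\cw_S$ small. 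After confirming $\widetilde X,\widetilde Y\in~^\perp(\Sigma S)$ and that they remain non-isomorphic indecomposable rigid objects in $\mc'$ (using Theorem~\ref{t:subfactor}~(1)), the inductive hypothesis applied to $\Gamma/\Gamma e_S\Gamma=\End_{\mc'}(T/S)^{op}$ — which is a cluster-tilted algebra of tame type with one fewer regular summand, by Theorem~\ref{t:subfactor-tame-type}~(2) and Lemma~\ref{l:key-lemma} — yields $\widetilde X\cong\widetilde Y$ in $\mc'$. Finally I would lift the isomorphism back: two indecomposable objects of $~^\perp(\Sigma S)$ that become isomorphic in the additive quotient $\mc'=~^\perp(\Sigma S)/\add S$ and that are not summands of $S$ must already be isomorphic in $\mc$ (a standard fact about additive quotients, since $S$ is rigid and $\widetilde X,\widetilde Y\notin\add S$), so $\widetilde X\cong\widetilde Y$ in $\mc$ and therefore $X\cong Y$ as $\Gamma$-modules. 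I expect the bulk of the writing to be the verification that $\widetilde X,\widetilde Y\in~^\perp(\Sigma S)$ and stay indecomposable and distinct in $\mc'$, possibly needing an auxiliary lemma about how $\dimv$ of a $\tau$-rigid module restricts along the quotient $\Gamma\to\Gamma/\Gamma e_S\Gamma$.
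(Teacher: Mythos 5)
Your proposal correctly identifies the base case (cluster-concealed, Theorem~\ref{t:cluster-concealed}), the inductive framework (induction on $|R|$), and the role of the subfactor $^\perp(\Sigma S)/\add S$ in dropping one regular summand. However, you yourself flag the central difficulty --- that $\widetilde X$ or $\widetilde Y$ may fail to lie in $^\perp(\Sigma S)$, or equivalently that $X$, $Y$ need not be annihilated by $e_S$ so their dimension vectors do not simply truncate along $\Gamma\to\Gamma/\Gamma e_S\Gamma$ --- and you leave it unresolved, only ``envisaging'' a case analysis. This is a genuine gap, because the obstruction is not removable by a cleverer choice of $S$: if $\widetilde X$ is transjective, or if $\widetilde X$ lies in the tube of $S$ with $\Hom_\mc(S,\widetilde X)\neq 0$, the subfactor reduction simply does not apply to it, and these are exactly the hard cases. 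The paper does \emph{not} push the subfactor argument through these cases; it uses entirely different tools for them.

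Concretely, the paper first disposes of the easy case $\Hom_\mc(R,M)=0=\Hom_\mc(R,N)$ by viewing $M_\Gamma,N_\Gamma$ as modules over the cluster-concealed algebra $\Gamma/\Gamma e_R\Gamma$ (Lemma~\ref{l:factor-cluster-concealed-algebra}), and then splits the remaining situation into three cases according to whether $M$, $N$ are transjective or regular. When both are transjective, it transports the equality of dimension vectors along a mutation sequence back to a hereditary algebra using exchange compatibility and Corollary~\ref{c:dimension-vector-equality}, contradicting the fact that rigid modules over hereditary algebras are determined by their dimension vectors. When exactly one is transjective, it uses $\Sigma^m$-periodicity of the regular part together with Lemma~\ref{l:tame-type-non-regular-mutation} and Corollary~\ref{c:dimension-vector-equality} to produce infinitely many non-isomorphic rigid objects $\Sigma^{lm}T_i$ with the same $\dim_k\Hom(-,M)$, contradicting the finiteness Lemma~\ref{l:hom-vanish-set} (whose proof rests on affine root systems). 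When both are regular, it first shows one of them must have quasi-length $t-1$, and then either derives the numerical contradiction $\dim_k\Hom_\mc(R_0,M)=2\neq 1=\dim_k\Hom_\mc(R_0,N)$ via Lemma~\ref{l:property-tame-clustercategory}~(4), or --- only in the remaining subcase where $\Hom_\mc(R_0,M)=0=\Hom_\mc(R_0,N)$ --- performs the subfactor induction you describe. None of the three substantive mechanisms (mutation transport of dimension-vector equalities, the finiteness lemma, the quasi-length-$(t-1)$ dimension count) appears in your proposal, so as written the proof does not go through.
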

\begin{proof}
Let $\mc=\mc_Q$ be a cluster category of tame type and $T$ a basic cluster-tilting object of $\mc$ such that $\Gamma=\End_\mc(T)^{op}$. We may write $T$ as $T=T_{tr}\oplus R$, where $T_{tr}$ is transjective and $R$ is regular. We prove the result by induction on $|R|$.  If $|R|=0$, then $\Gamma$ is a cluster-concealed algebra and the result follows from Theorem~\ref{t:cluster-concealed} directly.

In the following, we assume that $|R|\geq1$.
Recall that each indecomposable $\tau$-rigid $\Gamma$-module can be represented by $M_\Gamma=\Hom_\mc(T, M)$ for an indecomposable rigid object $M\in \mc\backslash \add \Sigma T$. It suffices to prove that if $M$ and $N$ are non-isomorphic indecomposable rigid objects in $\mc\backslash\add\Sigma T$, then $\dimv M_\Gamma\not=\dimv N_\Gamma$.

Let $M$ and $N$ be non-isomorphic indecomposable rigid objects in $\mc\backslash\add\Sigma T$.
Suppose that  $\dimv M_\Gamma=\dimv N_\Gamma$, 
we have either $\Hom_\mc(R, M)=0$~or~$\Hom_\mc(R, M)\neq 0$.

Let us first consider the case $\Hom_\mc(R, M)=0=\Hom_\mc(R, N)$. Denote by $\Gamma'=\Gamma/\Gamma e_R\Gamma$ the factor algebra of $\Gamma$ by the ideal generated by $e_R$, where $e_R$ is the idempotent of $\Gamma$ associated to $R$. It is clear that $M_\Gamma$ and $N_\Gamma$ can be viewed as $\Gamma'$-modules. Moreover, $M_\Gamma$ and $N_\Gamma$ are non-isomorphic indecomposable $\tau$-rigid $\Gamma'$-modules with the same dimension vector. By Lemma~\ref{l:factor-cluster-concealed-algebra}, we know that $\Gamma'$ is a cluster-concealed algebra. Consequently, different indecomposable $\tau$-rigid $\Gamma'$-modules have different dimension vectors by Theorem~\ref{t:cluster-concealed}, a contradiction. 

It remains to consider that $\Hom_\mc(R, M)\neq 0$. Let $R_1$ be an indecomposable direct summand of $R$ such that $\Hom_\mc(R_1, M)\neq 0$. Recall that we have assumed that $\dimv M_\Gamma=\dimv N_\Gamma$, we obtain that $\Hom_\mc(R_1, N)\neq 0$. We separate the remaining part into three cases.

\noindent{\bf Case 1:} both $M$ and $N$ are transjective. According to Lemma~\ref{l:compatible-objects}, $M$ and $N$ are exchange compatible. We may choose a hereditary algebra $H=kQ'$ such that $M$ and $N$ are identified to indecomposable rigid $H$-modules. By Lemma~\ref{l:cluster-vs-tilting}~(b), $T$ can be obtained from the basic cluster-tilting object $H$ by a finite sequence of mutations. Applying Corollary~\ref{c:dimension-vector-equality} repeatedly, we obtain $\dimv \Hom_{\mc_Q}(H, M)=\dimv \Hom_{\mc_Q}(H, N)$. In particular, $M$ and $N$ have the same dimension vector as $H$-modules, a contradiction. 

\noindent{\bf Case 2:} One of $M$ and $N$ is transjective. We may assume that $M$ is transjective and $N$ is regular. Note that, as $R$ and $N$ are regular, there is a positive integer $m$ such that $\Sigma^m R=R$ and $\Sigma^m N=N$. According to Lemma~\ref{l:tame-type-non-regular-mutation}, the cluster-tilting object $\Sigma^m T=\Sigma^m T'\oplus R$ can be obtained from $T$ by a finite sequence of non-regular mutations. Again by Corollary~\ref{c:dimension-vector-equality}, we deduce that $\dimv \Hom_{\mc_Q}(\Sigma^mT, M)=\dimv \Hom_{\mc_Q}(\Sigma^m T, N)$.
For any integer $l$, the cluster-titling object $\Sigma^{lm}T$ can also be obtained from $T$ by a finite sequence of mutations. Therefore, $\dimv \Hom_{\mc_Q}(\Sigma^{lm}T, M)=\dimv \Hom_{\mc_Q}(\Sigma^{lm}T,N)$ for any integer $l$. Consequently, we obtain
\begin{eqnarray*}
\dimv\Hom_{\mc_Q}(T, M)&=&\dimv\Hom_{\mc_Q}(T,N)=\dimv \Hom_{\mc_Q}(\Sigma^{lm}T, \Sigma^{lm}N)\\
&=&\dimv\Hom_{\mc_Q}(\Sigma^{lm}T, N)=\dimv\Hom_{\mc_Q}(\Sigma^{lm}T, M)
\end{eqnarray*}
for any integer $l$. Recall that $|R|\leq |kQ|-2$ and we may choose an indecomposable direct summand $T_i$ of $T$ such that $T_i$ is transjective. Then we have $\dim_k\Hom_{\mc_Q}(T_i, M)=\dim_k\Hom_{\mc_Q}(\Sigma^{lm}T_i, M)$ for any integer $l$. Note that $T_i$ is transjective, which implies that $\Sigma^{sm}T_i\not\cong \Sigma^{tm}T_i$ whenever $s\neq t\in \mathbb{Z}$. In particular, we obtain infinitely many non-isomorphic indecomposable rigid objects $X\in \mc_Q$ such that $\dim_k\Hom_{\mc_Q}(X, M)=\dim_k\Hom_{\mc_Q}(T_i,M)$, which contradicts Lemma~\ref{l:hom-vanish-set}. 

\noindent{\bf Case 3:} both $M$ and $N$ are regular. Recall that we have $\Hom_{\mc_Q}(R_1, M)\neq 0\neq \Hom_{\mc_Q}(R_1, N)$, which implies that
all of $R_1, M$ and $N$ belong to the same tube, say $\mt$ with rank $t$. We claim first that either $q.l.(M)=t-1$ or $q.l.(N)=t-1$. Otherwise, we have $q.l.(M)\leq t-2$ and $q.l.(N)\leq t-2$.
It follows from Lemma~\ref{l:compatible-objects} that $M$ and $N$ are exchange compatible. By Lemma~\ref{l:cluster-vs-tilting}~(b), $T$ can be obtained from $kQ$ by a finite sequence of mutations.
Applying Corollary~\ref{c:dimension-vector-equality}, we deduce that $\dimv M_{kQ}=\dimv N_{kQ}$ as indecomposable rigid $kQ$-modules, a contradiction.

Without loss of generality, we may assume that $q.l.(M)=t-1$.
 By Lemma~\ref{l:property-tame-clustercategory}~(3), $R$ admits an indecomposable direct summand $R_{0}$ lying in $\mt$ with $q.l.(R_{0})=1$. If $\Hom_\mc(R_{0}, M)\neq 0$, then $\Hom_\mc(R_{0},N)\neq 0$ since $\dimv M_\Gamma=\dimv N_\Gamma$. In particular, $M$ and $N$ lie in the ray $\mathrm{R}_{R_{0}}$ or the coray $\mathrm{C}_{\tau^2R_{0}}$. Recall that, as $M$ and $N$ are non-isomorphic indecomposable rigid objects in $\mt$, we deduce that $q.l.(N)\leq t-2$. By Lemma~\ref{l:property-tame-clustercategory} (4), we obtain $\dim_k\Hom_\mc(R_{0}, M)=2$ and $\dim_k\Hom_\mc(R_{0},N)=1$, which contradicts to $\dimv M_\Gamma=\dimv N_\Gamma$. It follows that $\Hom_\mc(R_{0}, M)=0=\Hom_\mc(R_{0},N)$.  Let $\Gamma''=\Gamma/\Gamma e_{R_0}\Gamma$ be the factor algebra of $\Gamma$ by the ideal generated by $e_{R_0}$, where $e_{R_0}$ is the idempotent of $\Gamma$ corresponding to $R_0$. The $\Gamma$-module $M_\Gamma$ and $N_\Gamma$ can be viewed as $\Gamma''$-modules. Moreover, $M_\Gamma$ and $N_\Gamma$ are non-isomorphic indecomposable $\tau$-rigid $\Gamma''$-modules with the same dimension vector.

  Denote by $R=R'\oplus R_0$ and consider the subfactor $\mc':=~^\perp(\Sigma R_0)/\add R_0$ of $\mc$ determined by $R_0$. By Theorem~\ref{t:subfactor-tame-type}~(2), $\mc'$ is also a cluster category of tame type. Moreover, $T_{tr}\oplus R'$ is a basic cluster-tilting object of $\mc'$ and $\Gamma''=\End_{\mc'}(T_{tr}\oplus R')^{op}$.
By Lemma~\ref{l:key-lemma}~(2), we deduce that $R'$ is precisely the regular direct summand of $T_{tr}\oplus R'$. Note that $|R'|=|R|-1$, by induction, we deduce that different indecomposable $\tau$-rigid $\Gamma''$-modules have different dimension vectors. However, $M_\Gamma$ and $N_\Gamma$ are non-isomorphic indecomposable $\tau$-rigid $\Gamma''$-modules with the same dimension vector, a contradiction. This finishes the proof.

\end{proof}

\section{Denominator conjectures for cluster algebras of tame type}~\label{S:denominator}
\subsection{Recollection of denominator vectors} In this subsection, we recall the definitions of cluster algebras and denominators. For simplicity, we restrict ourselves to the skew-symmetric and coefficient-free cases. For the general situation, we refer to ~\cite{FZ02,FZ03-2}.

Fix an integer $n$. For an integer $a$, we set $[a]_+=\max\{a,0\}$. Let $\mf$ be the field of fractions of the ring of polynomials in $n$ indeterminates with coefficients in $\mathbb{Q}$. A {\it seed} in $\mf$ is a pair $(B, \mathbf{x})$ consisting of a skew-symmetric matrix $B\in M_n(\mathbb{Z})$ and a free generating set $\mathbf{x}=\{x_1,\cdots, x_n\}$ of the field $\mf$. The matrix $B$ is the {\it exchange matrix} and $\mathbf{x}$ is the {\it cluster} of the seed $(B, \mathbf{x})$. Elements of the cluster $\mathbf{x}$ are {\it cluster variables} of the seed $(B, \mathbf{x})$. The mutation of matrix may be extended to the mutation of seeds. Namely, for any $1\leq k\leq n$, the {\it seed mutation} of $(B, \mathbf{x})$ in the direction $k$ transforms $(B, \mathbf{x})$ into a new seed $\mathbf{\mu_k}(B, \mathbf{x})=(\mu_k(B), \mathbf{x}')$, where the cluster $\mathbf{x}'=\{x_1',\cdots, x_n'\}$ is given by $x_j'=x_j$ for $j\neq k$ and $x_k'\in \mf$ is determined by the {\it exchange relation}
\[x_k'x_k=\prod_{i=1}^nx_i^{[b_{ik}]_+}+\prod_{i=1}^nx_i^{[-b_{ik}]_+}.
\]
The {\it cluster algebra} $\ma(B)=\ma(B,\mathbf{x})$ is the subalgebra of $\mf$ generated by all the cluster variables which can be obtained from the initial seed $(B, \mathbf{x})$ by iterated mutations.
As we have seen in Section~\ref{s:cluster-structure}, there is a one-to-one correspondence between skew-symmetric matrices and quivers without loops nor $2$-cycles. For a finite quiver $Q$ without loops nor $2$-cycles, we denote by $\ma(Q):=\ma(B(Q))$ the cluster algebra associated to the skew-symmetric matrix $B(Q)$.
A seed $(B, \mathbf{x})$ is {\it acyclic} if the quiver associated to $B$ is acyclic.
A cluster algebra is called {\it acyclic} if it admits an acyclic seed.

Let $Q$ be a finite quiver without loops nor $2$-cycles and $\ma(Q)$ the corresponding cluster algebra. A {\it cluster monomial} of $\ma(Q)$ is a monomial in cluster variables all of which belong to the same cluster.
 Denote by $Q_0=\{1,\cdots, n\}$ the vertex set of $Q$. For any given cluster $Y=\{y_1,\cdots, y_n\}$ of $\ma(Q)$, it follows from the Laurent phenomenon~\cite{FZ02} that every cluster monomial $x$ of $\ma(Q)$ can be written uniquely as
\[x=\frac{f(y_1,\cdots, y_n)}{\prod\limits_{i=1}^ny_i^{d_i}},
\]
where $d_1,\cdots, d_n\in \mathbb{Z}$ and $f(y_1,\cdots, y_n)$ is a polynomial in $y_1,\cdots, y_n$ which can not be divisible by any $y_i, 1\leq i\leq n$.
\begin{definition}
The integer vector $\operatorname{den}(x):=(d_1,\cdots, d_n)$ is called the denominator vector of $x$ with respect to the cluster $Y$.
\end{definition}
Inspired by Lusztig's parameterization of canonical bases in quantum groups, Fomin and Zelevinsky~\cite{FZ03-2} proposed the following {\bf denominator conjecture}.
\begin{conjecture}
Different cluster monomials have different denominator vectors with respect to a given cluster.
\end{conjecture}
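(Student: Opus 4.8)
The plan is to aim directly at the conjecture for cluster monomials, reducing it to the already-established statement about cluster variables and then isolating the genuinely new difficulty, which is one of linear independence rather than of categorification. The first observation is that the denominator vector is \emph{additive on products}. Fixing the cluster $Y=\{y_1,\dots,y_n\}$, the polynomial ring $\Q[y_1,\dots,y_n]$ is a unique factorisation domain in which each $y_i$ is prime, so $\operatorname{den}_i(x)=-v_{y_i}(x)$ is minus the $y_i$-adic valuation of $x\in\mf$. Since valuations are additive on $\mf^\times$, any cluster monomial $x=\prod_i z_i^{a_i}$ supported on a single cluster $\{z_1,\dots,z_n\}$ satisfies
\[\operatorname{den}(x)=\sum_i a_i\,\operatorname{den}(z_i).\]
Hence the conjecture is equivalent to the injectivity of the map sending a cluster monomial, viewed as a multiset of pairwise compatible cluster variables, to the corresponding nonnegative integral combination of denominator vectors.

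Next I would transport this to the categorification. By the Caldero--Keller correspondence a cluster monomial of $\ma(Q)$ corresponds to a rigid object $R=\bigoplus_i M_i^{\oplus a_i}$ of $\mc_Q$ whose indecomposable summands $M_i$ are pairwise $\Ext$-orthogonal, and via the Buan--Marsh relation together with the additivity above, $\operatorname{den}(x)$ is a signed combination of the dimension vectors $\dimv (M_i)_\Gamma$ of the associated $\tau$-rigid $\Gamma$-modules, the initial cluster variables contributing the negative standard basis vectors. Two cluster monomials with equal denominator vector therefore produce two rigid objects whose associated integral combinations of dimension vectors agree, and the problem becomes to recover the rigid object, hence the monomial, from this datum.

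For cluster algebras of tame type I would attack this with the subfactor induction already developed in the excerpt. Peeling off an indecomposable regular summand $R_0$ of quasi-length one and passing to $\mc':=\,{}^\perp(\Sigma R_0)/\add R_0$ lowers the number of regular summands, eventually reaching the cluster-concealed case where Theorem~\ref{t:cluster-concealed} gives complete control of dimension vectors. Two inputs would have to be fed into this induction: \emph{(i)} within a single cluster the denominator vectors of the cluster variables are linearly independent, so that a monomial is determined by its denominator vector once its cluster is known; and \emph{(ii)} the supporting cluster can itself be recovered from the denominator vector.

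The main obstacle is step \emph{(ii)}, and more generally the passage from single variables to their nonnegative combinations. Unlike $g$-vectors, whose linear independence over a cluster is a theorem of Adachi--Iyama--Reiten, the denominator vectors of the indecomposable summands of a $\tau$-rigid module need not be linearly independent, so a priori two non-proportional compatible families, possibly drawn from different clusters, could yield the same integral combination. Controlling such collisions is exactly what has left the full conjecture open even in finite type, and it is precisely what the present paper sidesteps by treating only cluster variables, where no combinations arise. I therefore expect \emph{(ii)} to be the decisive and hardest point, and would first test the strategy on rank-two and type-$A$ examples, where the monomial lattice is completely explicit, before attempting the general tame case.
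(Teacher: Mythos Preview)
The paper does \emph{not} prove this statement. Immediately after stating the conjecture, the paper says: ``To our best knowledge, the denominator conjecture has not yet been verified even for the cluster algebras of finite type,'' and then restricts attention to the \emph{weak} version concerning cluster variables only (Theorem~\ref{t:denominator-conjecture-tame-type}). There is therefore no proof in the paper to compare your proposal against.

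Your proposal is not a proof either, and you are candid about this: you isolate step \emph{(ii)}---recovering the supporting cluster from the denominator vector, or more generally ruling out collisions between nonnegative integral combinations of denominator vectors coming from different clusters---as ``the decisive and hardest point'' and note that it is open even in finite type. That assessment is accurate and matches the paper's own framing. Your reduction via additivity of $\operatorname{den}$ on products is correct (this is indeed a valuation argument from the Laurent phenomenon), and the translation to rigid objects in $\mc_Q$ is the standard categorical dictionary. But once you reach the need for linear independence of denominator vectors within a cluster \emph{(i)} and cross-cluster separation \emph{(ii)}, you have no argument, only a plan to test small cases. The subfactor induction you invoke is tailored to passing from one cluster-tilted algebra to a smaller one while tracking \emph{individual} dimension vectors; it gives no leverage on linear relations among several of them simultaneously, so it is not clear it helps with \emph{(i)} or \emph{(ii)} at all.

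One technical caution: you write that $\operatorname{den}(x)$ is ``a signed combination of the dimension vectors $\dimv (M_i)_\Gamma$''. In tame type this is not quite right: Theorem~\ref{t:affine-denominator-dimension} shows that $\operatorname{den}(x_M)$ differs from $\dimv M_\Gamma$ by a correction $-e_i$ at certain regular summands $T_i$ of maximal quasi-length, so the map from rigid objects to denominator vectors is not $\Z$-linear in the dimension vectors. Any attempt to promote the variable-level result to monomials would have to track these corrections carefully.
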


To our best knowledge, the denominator conjecture has not yet been verified even for the cluster algebras of finite type.
Thus, in order to establish the denominator conjecture, one may first to consider a weak version of the denominator conjecture. Namely,
different cluster variables have different denominators.
 In the following, we will verify the weak denominator conjecture for cluster algebras of tame type with respect to any initial seeds.

\subsection{Denominators for cluster variable of tame type}~\label{s:denominator-tame-type}
In this subsection, we fix a connected extended Dynkin quiver $Q$. Denote by $Q_0=\{1,\cdots, n\}$ the vertex set of $Q$. Let $\mc_Q$ be the cluster category of $Q$ and $\ma(Q)$ the cluster algebra associated to $Q$.
 There is a bijection between the cluster variables of $\ma(Q)$ and the indecomposable rigid objects of $\mc_Q$~\cite{CK06}. Moreover, the clusters of $\ma(Q)$ correspond to the basic cluster-tilting objects of $\mc_Q$. For a given cluster $Y=\{y_1,\cdots, y_n\}$ of $\ma(Q)$, denote by $\Sigma T=\bigoplus_{i=1}^n(\Sigma T_i)\in \mc_Q$ the corresponding basic cluster-tilting object, where $\Sigma T_1,\cdots, \Sigma T_n$ are indecomposable rigid objects corresponding to $y_1,\cdots, y_n$ respectively. Let $\Gamma=\End_{\mc_Q}(T)^{op}$ be the cluster-tilted algebra. We have a bijection between the indecomposable $\tau$-rigid $\Gamma$-modules and the indecomposable rigid objects in $\mc\backslash\add \Sigma T$ by Lemma~\ref{l:bijection-tau-rigid-and-rigid}.
Consequently, there is a one-to-one correspondence between the indecomposable $\tau$-rigid $\Gamma$-modules and the non-initial cluster variables of $\ma(Q)$. In the following, for an indecomposable rigid object $M\in \mc_Q$, we write $x_M$ for the corresponding cluster variable and denote by $\operatorname{den}(x_M)$ the denominator vector of $x_M$. The following result gave an explicit relation between the denominator vector of $x_M$ and the dimension vector of $M_\Gamma$ ({\it cf.}~\cite{BM10}).
\begin{theorem}~\label{t:affine-denominator-dimension}
Let $M_\Gamma$ be an indecomposable $\tau$-rigid $\Gamma$-module and $\operatorname{den}(x_M)=(d_1,\cdots, d_n)$ the denominator vector of the cluster variable $x_M$. Then we have
\[d_i=\begin{cases}\dim_k\Hom_{\mc_Q}(T_i, M)-1& ~\text{if there is a tube of rank $t\geq 2$ containing $T_i$ and}\\
&~\text{  $M$ such that $q.l.(T_i)=t-1$ and $M\not\in \cw_{\tau T_i}$;}\\
\dim_k\Hom_{\mc_Q}(T_i,M)& otherwise.\end{cases}
\]
\end{theorem}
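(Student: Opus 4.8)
This is the denominator formula of Buan and Marsh~\cite{BM10}; here is the route I would take to prove it. The plan is to pass from cluster-algebra combinatorics to representation theory through the Caldero--Chapoton cluster character $X$ --- normalised so that $X_{\Sigma T_i}=y_i$ for all $i$, whence $x_M=X_M$ --- and then to extract the denominator from its separation formula
\[
X_M=\Big(\prod_{i=1}^n y_i^{g_i}\Big)F_M(\hat y_1,\dots,\hat y_n),\qquad \hat y_j=\prod_{i=1}^n y_i^{b_{ij}},
\]
in which $(g_i)$ is the $\mathbf{g}$-vector of $x_M$ with respect to $Y$, $(b_{ij})$ is the exchange matrix at the cluster $Y$, and $F_M(\mathbf{t})=\sum_{\mathbf{e}}\chi(\Gr_{\mathbf{e}}(M_\Gamma))\,\mathbf{t}^{\mathbf{e}}$ is the $F$-polynomial of the $\Gamma$-module $M_\Gamma=\Hom_{\mc_Q}(T,M)$, a polynomial with positive coefficients on its support and constant term $1$. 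Since all the Euler characteristics of quiver Grassmannians of rigid modules occurring here are positive, no monomial of $F_M(\hat y)$ cancels when it is expanded as a Laurent polynomial in the $y_i$, so one reads off
\[
d_i=-g_i-\min\{\,\textstyle\sum_j b_{ij}e_j \,:\, \Gr_{\mathbf{e}}(M_\Gamma)\neq\varnothing\,\}.
\]
Hence the problem reduces to computing $(g_i)$ and, for each $i$, locating a vertex $\mathbf{e}$ of the support of $F_M$ minimising $\sum_j b_{ij}e_j$; set against the target value $(\dimv M_\Gamma)_i=\dim_k\Hom_{\mc_Q}(T_i,M)$, the claim amounts to saying that this minimiser may be taken to be $\dimv M_\Gamma$ itself, except at a coordinate $i$ for which $T_i$ is regular of quasi-length $d-1$ in a rank-$d$ tube, where one must instead lower the $i$-th coordinate by one, and then only when $M\notin\cw_{\tau T_i}$.

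I would dispose of the \emph{generic} case first --- when $T$ has no summand of quasi-length $d-1$ in a rank-$d$ tube, equivalently when every $T_i$ is exchange compatible (Lemma~\ref{l:compatible-objects}), which in particular covers cluster-concealed $\Gamma$. There the target is simply $\den(x_M)=\dimv M_\Gamma$, and the argument mirrors the one behind Theorem~\ref{t:cluster-concealed}: it holds for an acyclic cluster-tilting object by the theorem of Caldero and Keller~\cite{CK06}, and one propagates it along non-regular mutations using Lemma~\ref{l:AD} together with a denominator analogue of Proposition~\ref{p:dimension-vector-mutation}, the compatibility identity for $(\Sigma^2X,\Sigma^2X^*)$ being exactly what controls the change in the $k$-th coordinate. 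For the general case I would induct on $|R|$, as in the proof of Theorem~\ref{t:main-theorem-dimension-vector}: pick a quasi-simple summand $R_0$ of the regular part $R$ of $T$, pass to the subfactor $\mc'={}^\perp(\Sigma R_0)/\add R_0$ --- again a cluster category of tame type, with a distinguished cluster-tilting object having one fewer regular summand (Theorem~\ref{t:subfactor-tame-type}, Lemma~\ref{l:key-lemma}(2)) --- and transfer the formula. The transfer is transparent on $F$-polynomials: when $\Hom_{\mc_Q}(R_0,M)=0$, $M_\Gamma$ descends to a module over $\Gamma/\Gamma e_{R_0}\Gamma$ and $F_M$ does not involve the variable $t_{R_0}$, so it coincides with the $F$-polynomial computed in $\mc'$, while the change in the $\mathbf{g}$-vector and in the $R_0$-coordinate is read off from a minimal projective presentation of $M_\Gamma$ over $\Gamma$; the finitely many $M$ with $\Hom_{\mc_Q}(R_0,M)\neq 0$ then lie in a single tube with $R_0$ and are treated directly, as in Case~3 of the proof of Theorem~\ref{t:main-theorem-dimension-vector}.

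\textbf{The hard part} will be the explicit computation at an exceptional coordinate, i.e.\ the explicit determination of the index and of the bottom of the $F$-polynomial of a rigid module over one of these cluster-tilted algebras. Concretely one must identify the indecomposable projective $\Gamma$-module $\Hom_{\mc_Q}(T,T_i)$ and a minimal projective presentation of $M_\Gamma$ --- which by Theorem~\ref{t:cluster-tilted-algebras}(a) and Lemma~\ref{l:compute-morphism} is governed by the $\mathcal{M}$-maps and $\der$-maps inside the tube containing $T_i$ and by the standardness of the tubes of $\mod kQ$ --- and then show that the resulting shift of the minimiser of $\sum_j b_{ij}e_j$ is exactly $-1$ in the $i$-th coordinate when $M\notin\cw_{\tau T_i}$ and $0$ when $M\in\cw_{\tau T_i}$. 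Tracing this through, the dichotomy $M\in\cw_{\tau T_i}$ versus $M\notin\cw_{\tau T_i}$ is precisely the distinction between whether or not a certain $\der$-map into $M$ contributes to $M_\Gamma$, and hence to the bottom of $F_M$. This wing computation, and not the cluster-algebra formalism, is the genuinely new ingredient beyond the hereditary situation of Caldero and Keller, and it is what confines the $-1$ correction to the non-exchange-compatible summands of $T$.
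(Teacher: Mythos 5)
First, a point of reference: the paper offers no proof of Theorem~\ref{t:affine-denominator-dimension} at all --- it is quoted from Buan and Marsh \cite{BM10}, who prove it by a different route, via the exchange-compatibility calculus of \cite{BMR09} rather than via cluster characters. So your proposal cannot be measured against an in-paper argument, only on its own terms, and on those terms it is a plausible outline with at least two load-bearing gaps.

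The first gap: the extraction $d_i=-g_i-\min\{\sum_j b_{ij}e_j\}$ from the separation formula needs the asserted no-cancellation, i.e.\ positivity of the Euler characteristics $\chi(\Gr_{\mathbf e}(M_\Gamma))$ for rigid modules over an arbitrary cluster-tilted algebra of tame type. For acyclic seeds this is available, but for a general $T$ with regular summands it is a substantial theorem, not a remark; without it the true denominator could be strictly smaller than the monomial-wise maximum and the whole reduction collapses. The second, more serious gap: the ``denominator analogue of Proposition~\ref{p:dimension-vector-mutation}'' that you propagate along non-regular mutations presupposes that $\operatorname{den}(x_M)$ obeys the tropical exchange recursion when the \emph{initial} cluster is mutated. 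That is exactly the behaviour the paper's introduction flags as unavailable in general (there is no simple formula relating denominators with respect to different seeds), and establishing it for compatible exchange pairs is the actual technical content of \cite{BMR09}; the paper's Proposition~\ref{p:dimension-vector-mutation} and Corollary~\ref{c:dimension-vector-equality} transport only dimension vectors, and the bridge from $\dimv M_\Gamma$ to $\operatorname{den}(x_M)$ is precisely the theorem you are trying to prove, so invoking an ``analogue'' here is circular unless you reprove the \cite{BMR09} compatibility theorem. Finally, the exceptional-coordinate computation and the transfer of denominators (as opposed to modules) through the subfactor ${}^\perp(\Sigma R_0)/\add R_0$ --- which corresponds to a different cluster algebra, so its $F$-polynomials do not obviously compute denominators in $\ma(Q)$ with respect to $Y$ --- are acknowledged as ``the hard part'' but not carried out; the $-1$ correction and the dichotomy $M\in\cw_{\tau T_i}$ versus $M\notin\cw_{\tau T_i}$ are therefore never actually derived.
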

The following result is a consequence of Lemma~\ref{l:compute-morphism}~({\it cf.} also~\cite{BM10}).
\begin{lemma}~\label{l:dimenon-affine-type}
Let $M$ and $N$ be indecomposable rigid objects in $\mc_Q\backslash\add\Sigma T$. Suppose that $M$ and $N$ lie in the same tube of rank $t$ and $q.l.(N)=t-1$. Then we have
\[\dim_k\Hom_{\mc_Q}(N, M)=\begin{cases}2& M\not\in \cw_{\tau N};\\
0& M\in \cw_{\tau N}.\end{cases}
\]
\end{lemma}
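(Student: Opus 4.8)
The plan is to reduce everything to a computation inside the hereditary module category $\mod kQ$, where the relevant tube is standard, via the morphism formula of Lemma~\ref{l:compute-morphism}.

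First, since $M$ and $N$ are regular objects of $\mc_Q$ lying in the same tube, say of rank $t$, Lemma~\ref{l:compute-morphism} gives
\[\Hom_{\mc_Q}(N,M)=\Hom_{kQ}(N,M)\oplus D\Hom_{kQ}(M,\tau^2N),\]
so it is enough to show that both $\Hom_{kQ}(N,M)$ and $\Hom_{kQ}(M,\tau^2N)$ are one-dimensional when $M\notin\cw_{\tau N}$ and both vanish when $M\in\cw_{\tau N}$. Since $\tau=\Sigma$ is an autoequivalence acting on the coordinates of the tube by $(a,b)\mapsto(a-1,b)$, after applying a suitable power of $\tau$ we may assume $N=(1,t-1)$; then $\tau N=(t,t-1)$ and $\tau^2N=(t-1,t-1)$, and we write $M=(c,d)$ with $c\in\{1,\dots,t\}$ and, by rigidity of $M$ together with Lemma~\ref{l:property-tame-clustercategory}~(1), with $1\le d\le t-1$.

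Next I would carry out the standard-tube computation. Inside $\mod kQ$ the tube is standard and all its indecomposables are uniserial, so a nonzero morphism between two of them factors through its image, which is at once a quotient of the source --- the quotients of $(a,b)$ are exactly the modules $(a+i,b-i)$, $0\le i\le b$ --- and a submodule of the target --- the submodules of $(a,b)$ are exactly the modules $(a,j)$, $0\le j\le b$. Because the tube is standard and the endomorphism ring of each of its indecomposables of quasi-length $\le t-1$ is $k$, each isomorphism type of module occurring both as a nonzero quotient of the source and a nonzero submodule of the target contributes exactly a one-dimensional space of morphisms, and distinct such types contribute independently; hence the $\Hom_{kQ}$-dimension between two indecomposables of the tube equals the number of indices (read modulo $t$) producing such a coincidence. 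Applying this to the pairs $(N,M)$ and $(M,\tau^2N)$, and observing that in each case the index runs over a set of fewer than $t$ consecutive integers, one finds that each of the two spaces is at most one-dimensional, that $\Hom_{kQ}(N,M)\neq 0$ exactly when $c\le t-1$ and $c+d\ge t$, and that $\Hom_{kQ}(M,\tau^2N)\neq 0$ under the very same inequalities; in all other cases both vanish. Consequently $\dim_k\Hom_{\mc_Q}(N,M)$ equals $2$ when $c\le t-1$ and $c+d\ge t$, and equals $0$ otherwise.

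Finally I would match this numerical dichotomy with the wing condition. Unravelling $\cw_{\tau N}=\cw_{(t,t-1)}=\{(a',b')~|~a'\ge t~\text{and}~a'+b'\le 2t-1\}$ under the identification of first coordinates modulo $t$, one checks that $M=(c,d)$ lies in $\cw_{\tau N}$ precisely when $c=t$, or $c\le t-1$ and $c+d\le t-1$; equivalently, $M\notin\cw_{\tau N}$ precisely when $c\le t-1$ and $c+d\ge t$. Combining this with the previous paragraph yields $\dim_k\Hom_{\mc_Q}(N,M)=2$ for $M\notin\cw_{\tau N}$ and $\dim_k\Hom_{\mc_Q}(N,M)=0$ for $M\in\cw_{\tau N}$, as claimed. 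The step I expect to require the most care is the middle one: the bookkeeping of coordinates modulo $t$ when enumerating quotients and submodules, and in particular checking that the two summands $\Hom_{kQ}(N,M)$ and $\Hom_{kQ}(M,\tau^2N)$ are switched on by exactly the same pair of inequalities, so that the total dimension is never $1$. The reduction via Lemma~\ref{l:compute-morphism} and the final translation into the language of wings are routine once that computation is in place.
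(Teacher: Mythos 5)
Your argument is correct and is exactly the route the paper intends: the paper gives no detailed proof, simply asserting the lemma as a consequence of Lemma~\ref{l:compute-morphism} (citing \cite{BM10}), and your write-up fills in precisely that computation --- the decomposition $\Hom_{\mc_Q}(N,M)=\Hom_{kQ}(N,M)\oplus D\Hom_{kQ}(M,\tau^2N)$, the standard-tube count showing both summands are nonzero (and then one-dimensional) exactly when $c\le t-1$ and $c+d\ge t$, and the identification of that condition with $M\not\in\cw_{\tau N}$. The coordinate bookkeeping checks out, so no gaps.
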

Combining Theorem~\ref{t:affine-denominator-dimension} and Lemma~\ref{l:dimenon-affine-type}, we clearly know that the denominator vectors of non-initial cluster variables of $\ma(Q)$ are nonnegative.

\subsection{Weak denominator conjecture of tame type}
We begin with the following lemma which plays an important role in the proof of the weak denominator conjecture of cluster algebra of tame type.
\begin{lemma}~\label{l:mutation-transjective-vs-regular}
Let $\mc_Q$ be a cluster category of tame type. Let $T=\overline{T}\oplus T_k$ and $\mu_{T_k}(T)=\overline{T}\oplus T_k^*$ be two basic cluster-tilting objects of $\mc_Q$ such that both $T_k$ and $T_k^*$ are transjective. Denote by $T_i$ an indecomposable regular direct summand of $T$. Let $M$ be an indecomposable transjective object of $\mc_Q$ and $N$ an indecomposable rigid object of $\mc_Q$ which lies in the same tube as $T_i$. Assume that $M,N\not\in \add\Sigma T$ and for any indecomposable direct summand $T_j$ of $T$
\[\dim_k\Hom_{\mc_Q}(T_j, M)=\begin{cases}\dim_k\Hom_{\mc_Q}(T_j,N)& j\neq i;\\ \dim_k\Hom_{\mc_Q}(T_i,N)-1&j=i,\end{cases}
\]
then $M,N\not\in \add\Sigma \mu_{T_k}(T)$ and $\dim_k\Hom_{\mc_Q}(T_k^*,M)=\dim_k\Hom_k(T_k^*,N).$
\end{lemma}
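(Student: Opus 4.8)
The plan is to run the argument behind Proposition~\ref{p:dimension-vector-mutation} and Corollary~\ref{c:dimension-vector-equality}, applying the compatibility identity of Lemma~\ref{l:AD} to $M$ and to $N$ for the pair $(\Sigma^2T_k,\Sigma^2T_k^*)$. Since $T_k$ and $T_k^*$ are transjective and $\Sigma^2$ preserves transjective objects, $(\Sigma^2T_k,\Sigma^2T_k^*)$ is again a transjective exchange pair, so by Lemma~\ref{l:non-regulr-mutation}(1) every indecomposable rigid object of $\mc_Q$ -- in particular $M$ and $N$ -- is compatible with it. Because $T_i$ is regular and $T_k$ transjective we have $i\neq k$, so $T_i\in\add\overline T$, $T_i$ is a summand of $\mu_{T_k}(T)$, and $\add\Sigma\mu_{T_k}(T)=\add\Sigma\overline T\oplus\add\Sigma T_k^*$ while $\add\Sigma T=\add\Sigma\overline T\oplus\add\Sigma T_k$. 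Fix the exchange triangles $T_k\to B\to T_k^*\to\Sigma T_k$ and $T_k^*\to B'\to T_k\to\Sigma T_k^*$ with $B,B'\in\add\overline T$; in particular $T_k$ is a summand of neither $B$ nor $B'$.

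The crucial step -- the one I expect to be the genuine obstacle -- is to show that $B$ and $B'$ have \emph{no regular summand at all}, hence in particular $T_i\notin\add B\cup\add B'$. The route I would take is to pass to the subfactor $\mc':={}^{\perp}(\Sigma R)/\add R$, where $R$ is the full regular part of $T$, constructed by iterating Theorem~\ref{t:subfactor-tame-type}(2) over the quasi-length-$1$ summands of $R$ (which exist by Lemma~\ref{l:property-tame-clustercategory}(3)): by Lemma~\ref{l:key-lemma}(1) the transjective part $T_{tr}$ of $T$ stays a cluster-tilting object of $\mc'$ with no regular summand, and by Lemma~\ref{l:key-lemma}(2) the non-regular mutation $\mu_{T_k}$ of $T$ corresponds to mutating $T_{tr}$ inside $\mc'$; its exchange triangle in $\mc'$ has middle term in $\add(T_{tr}/T_k)$, hence transjective, and one checks this lifts back to the exchange triangle of $\mu_{T_k}(T)$ in $\mc_Q$ without acquiring regular summands. (An alternative is to realise $T_k$ and $T_k^*$ as preprojective, resp. preinjective, modules over a hereditary model derived equivalent to $kQ$ and to invoke closure of these classes under extensions together with Lemma~\ref{l:property-tame-clustercategory}(2), after noting that a transjective exchange pair consists of modules of the same type.)

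Granting this, the rest is bookkeeping. Since $N$ is regular while $\Sigma T_k^*$ is transjective and $N\notin\add\Sigma T$, we have $N\not\cong\Sigma T_k$ and $N\not\cong\Sigma T_k^*$, so Lemma~\ref{l:AD} yields
\[\dim_k\Hom_{\mc_Q}(T_k,N)+\dim_k\Hom_{\mc_Q}(T_k^*,N)=\max\{\dim_k\Hom_{\mc_Q}(B,N),\dim_k\Hom_{\mc_Q}(B',N)\}.\]
I would next exclude $M\cong\Sigma T_k^*$: if it held, then rigidity of $\mu_{T_k}(T)$ gives $\Hom_{\mc_Q}(\overline T,\Sigma T_k^*)=0$ and $\dim_k\Hom_{\mc_Q}(T_k,\Sigma T_k^*)=1$, so $\dimv M_\Gamma=e_k$ and the hypothesis forces $\dimv N_\Gamma=e_i+e_k$; but then $\dim_k\Hom_{\mc_Q}(T_j,N)=0$ for every summand $T_j\neq T_i,T_k$ of $T$, and since $B,B'$ are transjective and avoid $T_k$ we would get $\dim_k\Hom_{\mc_Q}(B,N)=\dim_k\Hom_{\mc_Q}(B',N)=0$, contradicting the displayed identity because $\dim_k\Hom_{\mc_Q}(T_k,N)=1$. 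Hence $M\not\cong\Sigma T_k^*$, and together with $M\not\cong\Sigma T_k$ this gives $M,N\notin\add\Sigma\mu_{T_k}(T)$. Lemma~\ref{l:AD} now applies to $M$ as well, giving the identity with $N$ replaced by $M$; since $B$ and $B'$ only involve summands $T_j$ with $j\neq i$ and $\dim_k\Hom_{\mc_Q}(T_j,M)=\dim_k\Hom_{\mc_Q}(T_j,N)$ for all $j\neq i$, we obtain $\dim_k\Hom_{\mc_Q}(B,M)=\dim_k\Hom_{\mc_Q}(B,N)$ and $\dim_k\Hom_{\mc_Q}(B',M)=\dim_k\Hom_{\mc_Q}(B',N)$; subtracting the two identities and cancelling the equal term $\dim_k\Hom_{\mc_Q}(T_k,M)=\dim_k\Hom_{\mc_Q}(T_k,N)$ leaves $\dim_k\Hom_{\mc_Q}(T_k^*,M)=\dim_k\Hom_{\mc_Q}(T_k^*,N)$, completing the proof. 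The one place where real work is hidden is the regularity-free description of $B$ and $B'$ in the second paragraph; once that is in hand, both assertions of the lemma -- including the exclusion of the boundary case $M\cong\Sigma T_k^*$ -- drop out of the compatibility identities.
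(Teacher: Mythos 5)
Your reduction to the compatibility identities and the final bookkeeping are fine, but the step you yourself flag as the crux --- that $B$ \emph{and} $B'$ have no regular summands, so that $T_i\notin\add B\cup\add B'$ --- is not just unproven; it is false in general, and the paper's proof is built precisely around this failure. In the quiver of $\End_{\mc_Q}(T)^{op}$ nothing prevents arrows between the vertex of the transjective summand $T_k$ and the vertex of the regular summand $T_i$; when such arrows exist, $T_i$ occurs in $B\oplus B'$. Your subfactor route does not repair this: the exchange triangle of $T_k$ in $\mc'={}^{\perp}(\Sigma R)/\add R$ has middle term the minimal right $\add(T_{tr}/T_k)$-approximation \emph{in the quotient}, and lifting back to $\mc_Q$ the minimal right $\add\overline T$-approximation does in general acquire summands from $\add R$ --- that is exactly what the quotient discards. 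Your hereditary-model alternative controls only the one exchange triangle that is induced by a short exact sequence of $H$-modules; the other triangle has its connecting map in the $D\Ext^1_H$ direction, and its middle term need not be transjective.

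What the paper actually proves is asymmetric. Choosing $H$ so that $\Sigma^2T_k$ and $\Sigma^2T_k^*$ are preinjective, exactly one exchange triangle, say the one through $B$, is induced by a short exact sequence of preinjective $H$-modules; hence $B$ is transjective (so $T_i\notin\add B$), and since $\Ext^1_H(N,-)$ vanishes on preinjectives for $N$ regular, applying $\Hom$ and dualizing gives the \emph{exact} additivity
\[\dim_k\Hom_{\mc_Q}(B,N)=\dim_k\Hom_{\mc_Q}(T_k,N)+\dim_k\Hom_{\mc_Q}(T_k^*,N),\]
so the maximum in the compatibility identity for $N$ is attained at $B$. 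For $B'$ one only gets $\dim_k\Hom_{\mc_Q}(B',M)\le\dim_k\Hom_{\mc_Q}(B',N)$, an inequality, because $T_i$ may occur in $B'$ and $\dim_k\Hom_{\mc_Q}(T_i,M)<\dim_k\Hom_{\mc_Q}(T_i,N)$; combined with $\dim_k\Hom_{\mc_Q}(B,M)=\dim_k\Hom_{\mc_Q}(B,N)$ this forces the maximum for $M$ to be attained at $B$ as well, after which cancellation gives $\dim_k\Hom_{\mc_Q}(T_k^*,M)=\dim_k\Hom_{\mc_Q}(T_k^*,N)$. The exclusion of $M\cong\Sigma T_k^*$ is likewise run through the exact additivity at $B$ alone, not through the vanishing of both $\Hom_{\mc_Q}(B,N)$ and $\Hom_{\mc_Q}(B',N)$. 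Without this asymmetric argument your proof does not close.
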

\begin{proof}
Let $T_k^*\xrightarrow{f}B\xrightarrow{g}T_k\to \Sigma T_k^*$ and $T_k\xrightarrow{f'}B'\xrightarrow{g'}T_k^*\to \Sigma T_k$ be the exchange triangles associated to the exchange pair $(T_k, T_k^*)$. It is clear that $(\Sigma^2 T_k, \Sigma^2T_k^*)$ is also a transjective exchange pair and the corresponding exchange triangles are precisely $\Sigma^2T_k^*\xrightarrow{\Sigma^2f}\Sigma^2B\xrightarrow{\Sigma^2g}\Sigma^2T_k\to \Sigma^3 T_k^*$ and $\Sigma^2T_k\xrightarrow{\Sigma^2f'}\Sigma^2B'\xrightarrow{\Sigma^2g'}\Sigma^2T_k^*\to \Sigma^3 T_k$. Moreover, $M$ and $N$ are compatible with the exchange pair $(\Sigma^2T_k, \Sigma^2T_k^*)$ by Lemma~\ref{l:non-regulr-mutation}~(1).
Note that, as $T_k^*$ is transjective and $N$ is regular, we clearly have $N\not\cong \Sigma T_k^*$. On the other hand, $N\not\in \add \Sigma T$ implies that $N\not\cong \Sigma T_k$. Consequently, by Lemma~\ref{l:AD}, we obtain the following equality
\begin{eqnarray}~\label{f:dimension-equality}
&&\dim_k\Hom_{\mc_Q}(T_k, N)+\dim_k\Hom_{\mc_Q}(T_k^*,N)\\
&=&
\max\{\dim_k\Hom_{\mc_Q}(B, N), \dim_k\Hom_{\mc_Q}(B',N)\}.\notag
\end{eqnarray}
Since $(\Sigma^2T_k, \Sigma^2T_k^*)$ is a transjective exchange pair, we may choose a hereditary algebra $H=kQ'$ which is derived equivalent to $kQ$,  such that $\Sigma^2T_k$ and $\Sigma^2T_k^*$ are identified to preinjective $H$-modules.
Hence one of the exchange triangles associated to $(\Sigma^2T_k, \Sigma^2T_k^*)$, say
 \[\Sigma^2T_k^*\xrightarrow{\Sigma^2f}\Sigma^2B\xrightarrow{\Sigma^2g}\Sigma^2T_k\to \Sigma^3 T_k^*,
 \]
 is induced by a short exact sequence of $H$-modules~({\it cf.}~\cite{BMRRT}).
 Consequently, $\Sigma^2B$ is also a preinjective $H$-module. Note that, as $N$ is regular, $N$ is identified to a regular $H$-module. Applying the functor $\Hom_H(N,-)$ to the short exact sequence $\Sigma^2 T_k^*\rightarrowtail \Sigma^2B\twoheadrightarrow \Sigma^2T_k$ of preinjective $H$-modules and by Lemma~\ref{l:compute-morphism}, we have
 \[0\to\Hom_{\mc_Q}(N,\Sigma^2T_k^*)\to \Hom_{\mc_Q}(N,\Sigma^2B)\to \Hom_{\mc_Q}(N,\Sigma^2T_k)\to 0.
 \]
 The $2$-Calabi-Yau property of $\mc_Q$ implies the following short exact sequence
 \[0\to\Hom_{\mc_Q}(T_k, N)\to \Hom_{\mc_Q}(B,N)\to \Hom_{\mc_Q}(T_k^*,N)\to 0.
 \]
In other words, we have
\begin{eqnarray}~\label{f:dimension-B}
\dim_k\Hom_{\mc_Q}(B,N)=\dim_k\Hom_{\mc_Q}(T_k,N)+\dim_k\Hom_{\mc_Q}(T_k^*,N).
\end{eqnarray}
Consequently, $\dim_k\Hom_{\mc_Q}(B,N)\geq \dim_k\Hom_{\mc_Q}(B',N)$ by equality~(\ref{f:dimension-equality}).

We claim that $M\not\cong \Sigma T_k^*$. Otherwise, one has $\dim_k\Hom_{\mc_Q}(T_j,M)=\begin{cases}0& j\neq k;\\ 1& j=k.\end{cases}$
By the assumption $\dim_k\Hom_{\mc_Q}(T_j, M)=\begin{cases}\dim_k\Hom_{\mc_Q}(T_j,N)& j\neq i;\\ \dim_k\Hom_{\mc_Q}(T_i,N)-1&j=i,\end{cases}$, we deduce that
\[\dim_k\Hom_{\mc_Q}(T_j,N)=\begin{cases}1& j=k;\\ 1&j=i;\\ 0& \text{else}.\end{cases}
\]
Note that $B\in\add \overline{T}$ is trasnjective, which implies that $B$ does not admit $T_i$ as a direct summand. In particular,  $\Hom_{\mc_Q}(B, N)=0$ and consequently $\dim_k\Hom_{\mc_Q}(T_k^*,N)=-1$ by ~(\ref{f:dimension-B}), a contradiction. Hence, $M\not\cong \Sigma T_k^*$. By the condition that $M,N\not\in \add \Sigma T$, we have proved that $M,N\not\in \add \Sigma \mu_{T_k}(T)$.

Recall that $M$ is transjective, which implies that $M$ is also compatible with the exchange pair $(\Sigma^2T_k,\Sigma^2T_k^*)$. On the other hand, we also have $M\not\cong \Sigma T_k$ and $M\not\cong \Sigma T_k^*$. By Lemma~\ref{l:AD}, we obtain
\begin{eqnarray}~\label{f:dimension-MB}
&&\dim_k\Hom_{\mc_Q}(T_k,M)+\dim_k\Hom_{\mc_Q}(T_k^*,M)\\
&=&\max\{\dim_k\Hom_{\mc_Q}(B, M),\dim_k\Hom_{\mc_Q}(B',M)\}.\notag
\end{eqnarray}
Notice that $B$ does not admit $T_i$ as a direct summand.
By the assumption, we have 
\[\dim_k\Hom_{\mc_Q}(B,M)=\dim_k\Hom_{\mc_Q}(B,N)~\text{and}~\dim_k\Hom_{\mc_Q}(B',M)\leq \dim_k\Hom_{\mc_Q}(B',N).\] Recall that we have also shown that \[\dim_k\Hom_{\mc_Q}(B,N)\geq \dim_k\Hom_{\mc_Q}(B',N).\] Putting all of these together, we get $\dim_k\Hom_{\mc_Q}(B',M)\leq \dim_k\Hom_{\mc_Q}(B,M)$ and
\begin{eqnarray}\label{f:dimension-M}
\dim_k\Hom_{\mc_Q}(T_k,M)+\dim_k\Hom_{\mc_Q}(T_k^*,M)=\dim_k\Hom_{\mc_Q}(B,M)
\end{eqnarray}
by equality~(\ref{f:dimension-MB}).
We conclude that $\dim_k\Hom_{\mc_Q}(T_k^*,M)=\dim_k\Hom_{\mc_Q}(T_k^*,N)$ by (\ref{f:dimension-B}) and (\ref{f:dimension-M}).
\end{proof}
\begin{theorem}~\label{t:denominator-conjecture-tame-type}
Let $Q$ be a connected extended Dynkin quiver and $\ma(Q)$ the cluster algebra associated to $Q$. Then different cluster variables of $\ma(Q)$ have different denominators with respect to any given initial seed.

\end{theorem}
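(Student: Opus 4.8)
The plan is to pass from denominator vectors to dimension vectors using Theorem~\ref{t:affine-denominator-dimension} and then play this off against Theorem~\ref{t:main-theorem-dimension-vector}. First, the initial cluster variables are harmless: with respect to the cluster $Y$ the variable $y_i$ has denominator vector $-e_i$, so these are pairwise distinct, and they differ from the denominator vectors of the non-initial cluster variables since the latter are nonnegative (the remark after Lemma~\ref{l:dimenon-affine-type}). Hence it suffices to take non-isomorphic indecomposable rigid objects $M,N\in\mc_Q\backslash\add\Sigma T$ with $\operatorname{den}(x_M)=\operatorname{den}(x_N)$ and derive a contradiction. By Theorem~\ref{t:main-theorem-dimension-vector} one has $\dimv M_\Gamma\neq\dimv N_\Gamma$, so there is an index $i$ with $\dim_k\Hom_{\mc_Q}(T_i,M)\neq\dim_k\Hom_{\mc_Q}(T_i,N)$ although the $i$-th entries of $\operatorname{den}(x_M)$ and $\operatorname{den}(x_N)$ agree. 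Comparing the two cases of Theorem~\ref{t:affine-denominator-dimension} and using Lemma~\ref{l:dimenon-affine-type}, exactly one of $M,N$ --- say $N$ --- gets the $-1$ correction at $i$: if neither did, the $i$-th entries would differ; if both did, then $\dim_k\Hom_{\mc_Q}(T_i,M)=2=\dim_k\Hom_{\mc_Q}(T_i,N)$, again a contradiction. Thus $T_i$ is regular and lies in a tube $\ct$ of rank $t\geq 2$ with $q.l.(T_i)=t-1$, while $N\in\ct$ and $N\notin\cw_{\tau T_i}$, so $\dim_k\Hom_{\mc_Q}(T_i,N)=2$ and the $i$-th entry of $\operatorname{den}(x_N)$ is $1$. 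Since $M$ gets no correction at $i$ we must have $M\notin\ct$ (otherwise $\dim_k\Hom_{\mc_Q}(T_i,M)=0$ by Lemma~\ref{l:dimenon-affine-type}) and $\dim_k\Hom_{\mc_Q}(T_i,M)=1$; combined with $T_i$ regular and Lemma~\ref{l:property-tame-clustercategory}(2), this forces $M$ to be transjective.

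The next step is to see that this is exactly the hypothesis of Lemma~\ref{l:mutation-transjective-vs-regular}. Being transjective, $M$ lies in no tube, hence receives no correction and $\operatorname{den}(x_M)=\dimv M_\Gamma$. Moreover, any two distinct indecomposable rigid objects of quasi-length $t-1$ in $\ct$ are not compatible --- a short computation with $\Sigma=\tau$ and Lemma~\ref{l:dimenon-affine-type} --- so $T_i$ is the unique direct summand of $T$ in $\ct$ of quasi-length $t-1$; together with $N\notin\cw_{\tau T_i}$ this shows that $N$ receives a correction only at $i$. Comparing $\operatorname{den}(x_M)=\operatorname{den}(x_N)$ entrywise now gives $\dim_k\Hom_{\mc_Q}(T_j,M)=\dim_k\Hom_{\mc_Q}(T_j,N)$ for all $j\neq i$ and $\dim_k\Hom_{\mc_Q}(T_i,M)=\dim_k\Hom_{\mc_Q}(T_i,N)-1$, precisely the input required by Lemma~\ref{l:mutation-transjective-vs-regular} for this $i$, $M$ and $N$.

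Finally I would propagate this relation by non-regular mutations. Write $T=T_{tr}\oplus R$ with $T_{tr}$ transjective and $R$ regular; then $T_i$ is a summand of $R$, and $T$ has a transjective summand $T_\ell$ because $|R|\leq|kQ|-2$ by Proposition~\ref{p:property-of-tame-quiver}(3). As $R$ and $N$ are regular, fix $m>0$ with $\Sigma^mR=R$ and $\Sigma^mN=N$. For every $s\in\Z$ we have $\Sigma^{sm}T=\Sigma^{sm}T_{tr}\oplus R$, so by Lemma~\ref{l:tame-type-non-regular-mutation} the cluster-tilting object $\Sigma^{sm}T$ is reached from $T$ by a sequence of non-regular mutations, each taking place at a transjective summand (hence leaving $R$, in particular $T_i$, untouched) with transjective replacement. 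Along such a sequence $M$ stays transjective and $N$ stays in $\ct$, so Lemma~\ref{l:mutation-transjective-vs-regular} applies at every step and, by induction, the relation of the previous paragraph holds with $\Sigma^{sm}T$ in place of $T$. In particular, for the transjective summand $\Sigma^{sm}T_\ell$ of $\Sigma^{sm}T$, the relation at the index $\ell\neq i$, the autoequivalence $\Sigma^{-sm}$, and $\Sigma^mN=N$ give
\[
\dim_k\Hom_{\mc_Q}(\Sigma^{sm}T_\ell,M)=\dim_k\Hom_{\mc_Q}(\Sigma^{sm}T_\ell,N)=\dim_k\Hom_{\mc_Q}(T_\ell,N)
\]
for all $s\in\Z$, a value independent of $s$. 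Since $T_\ell$ is transjective, the objects $\Sigma^{sm}T_\ell$ ($s\in\Z$) are pairwise non-isomorphic indecomposable rigid objects, so the set $\{X\in\ind^{\operatorname{rig}}\mc_Q\mid\dim_k\Hom_{\mc_Q}(X,M)\leq\dim_k\Hom_{\mc_Q}(T_\ell,N)\}$ is infinite, contradicting Lemma~\ref{l:hom-vanish-set} because $M$ is transjective. This contradiction completes the proof.

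The step I expect to be the main obstacle is the bookkeeping in the first two paragraphs: extracting from the single identity $\operatorname{den}(x_M)=\operatorname{den}(x_N)$, via Theorem~\ref{t:affine-denominator-dimension} and Lemma~\ref{l:dimenon-affine-type}, both the dichotomy that one of $M,N$ is transjective while the other is regular of quasi-length $t-1$ in a tube that contains a summand of $T$, and the fact that the discrepancy between $\dimv$ and $\operatorname{den}$ is concentrated at the single index $i$. This concentration is what makes Lemma~\ref{l:mutation-transjective-vs-regular} applicable as stated; once it is in place the remaining propagation argument is routine and is designed precisely so that Lemma~\ref{l:hom-vanish-set} delivers the contradiction.
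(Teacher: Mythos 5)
Your proposal is correct and follows essentially the same route as the paper's proof: reduce via Theorem~\ref{t:affine-denominator-dimension} and Theorem~\ref{t:main-theorem-dimension-vector} to the configuration where one object is transjective and the other is regular of quasi-length $t-1$ receiving the $-1$ correction at a unique index, then propagate the resulting dimension identity through the non-regular mutations of Lemma~\ref{l:tame-type-non-regular-mutation} using Lemma~\ref{l:mutation-transjective-vs-regular}, and conclude by the finiteness statement of Lemma~\ref{l:hom-vanish-set}. The only difference is organizational: you invoke Theorem~\ref{t:main-theorem-dimension-vector} up front to locate the discrepancy index, whereas the paper runs an explicit case analysis on the positions of $M$ and $N$ relative to the tubes; the substance is identical.
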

\begin{proof}
Let $Q_0=\{1,2,\cdots,n\}$ be the vertex set of $Q$ and $\mc=\mc_Q$ the cluster category associated to $Q$. We fix a cluster $Y=\{y_1,\cdots,y_n\}$ of $\ma(Q)$ as initial seed.
Recall that there is a bijection between the cluster variables of $\ma(Q)$ and the indecomposable rigid objects of $\mc$.
For any indecomposable rigid object $M$ of $\mc$, we denote by $x_M$ the corresponding cluster variable. Denote by $\Sigma T=\bigoplus_{i=1}^n(\Sigma T_i)$ the basic cluster-tilting objects corresponding to $Y$, where $\Sigma T_1,\cdots, \Sigma T_n$ are indecomposable rigid objects correspond to $y_1,\cdots, y_n$ respectively. Namely, we have $x_{\Sigma T_i}=y_i, 1\leq i\leq n$. Let $\Gamma=\End_{\mc_Q}(T)^{op}$ be the cluster-tilted algebra associated to $T$.
According to Theorem~\ref{t:affine-denominator-dimension}, for any indecomposable rigid object $M\in \mc\backslash \add\Sigma T$, we clearly have $\operatorname{den}(x_M)\neq \operatorname{den}(x_{\Sigma T_i})$ for any $1\leq i\leq n$. Thus it remains to prove that for any non-isomorphic indecomposable rigid objects $M, N\in \mc\backslash\add\Sigma T$, we have $\operatorname{den}(x_M)\neq \operatorname{den}(x_N)$.

In the following, for an indecomposable rigid object $M\in \mc\backslash\add\Sigma T$,  $M$ is said to satisfy the property $(P)$,
 if $T$ admits an indecomposable direct summand $T_i$ such that
\begin{itemize}
\item[(i)] $T_i$ and $M$ belong to the same tube of rank $t\geq 2$ and $q.l.(T_i)=t-1$;
\item[(ii)] $M\not\in \cw_{\Sigma T_i}$.
\end{itemize}
If both $M$ and $N$ do not satisfy the property $(P)$, then $\operatorname{den}(x_M)=\dimv M_\Gamma$ and $\operatorname{den}(x_N)=\dimv N_\Gamma$ by Theorem~\ref{t:affine-denominator-dimension}. Consequently, $\operatorname{den}(x_M)\neq \operatorname{den}(x_N)$ by Theorem~\ref{t:main-theorem-dimension-vector}. In particular, if $M$ and $N$ are transjective, then $\operatorname{den}(x_M)\neq \operatorname{den}(x_N)$.
In the following, we may assume that $N$ is regular and there is an indecomposable direct summand $T_i$ of $T$ such that $T_i\in \mt_N$ with $q.l.(T_i)=t-1$ and $N\not\in \cw_{\Sigma T_i}$, where $t$ is the rank of the tube $\mt_N$. It is not hard to see that $T_i$ is the unique indecomposable direct summand of $T$ such that $T_i\in \mt_N$ with $q.l.(T_i)=t-1$. In this situation, we have $\operatorname{den}(x_N)=\dimv N_\Gamma-e_i$ by Theorem~\ref{t:affine-denominator-dimension}, where $e_1,\cdots, e_n$ is the standard basis of $\mathbb{Z}^n$.
We separate the remaining part into three cases.

\noindent{\bf Case 1:} $M\in\mt_N$.
If $M\not\in \cw_{\Sigma T_i}$, we have $\operatorname{den}(x_M)=\dimv M_\Gamma-e_i$ by Theorem~\ref{t:affine-denominator-dimension}. It follows from Theorem~\ref{t:main-theorem-dimension-vector} that $\operatorname{den}(x_M)\neq \operatorname{den}(x_N)$.
Now assume that $M\in \cw_{\Sigma T_i}$. According to Lemma~\ref{l:dimenon-affine-type}, we have $\dim_k\Hom_{\mc}(T_i, M)=0$ and $\dim_k\Hom_{\mc}(T_i, N)=2$. Again by Theorem~\ref{t:affine-denominator-dimension}, we deduce that $\operatorname{den}(x_M)\neq \operatorname{den}(x_N)$ in this case.

\noindent{\bf Case 2:} $M$ and $N$ belong to different tubes. Note that we have $\dim_k\Hom_{\mc}(T_i, N)=2$.
On the other hand, $\dim_k\Hom_{\mc}(T_i,M)=0$ by the fact that there are no non-zero morphisms between different tubes. Consequently, $\operatorname{den}(x_M)\neq \operatorname{den}(x_N)$ by Theorem~\ref{t:affine-denominator-dimension}.

\noindent{\bf Case 3:} $M$ is transjective. Suppose that we have $\operatorname{den}(x_M)=\operatorname{den}(x_N)$ in this case. By Theorem~\ref{t:affine-denominator-dimension}, we obtain
\[\dim_k\Hom_{\mc}(T_j,M)=\begin{cases}\dim_k\Hom_\mc(T_j,N)& j\neq i;\\
\dim_k\Hom_\mc(T_i,N)-1& j=i.\end{cases}
\]
In particular, $M$ and $N$ satisfy the condition of Lemma~\ref{l:mutation-transjective-vs-regular}.

We rewrite $T$ as $T=T_{tr}\oplus R$, where $T_{tr}$ is transjective and $R$ is regular. In particular, there exists a positive integer $m$ such that $\Sigma^m R=R$ and $\Sigma^mN=N$. By Lemma~\ref{l:tame-type-non-regular-mutation}, $\Sigma^m T$ can be obtained from $T$ by a series of non-regular mutations. Let $T_j$ be an indecomposable direct summand of $T_{tr}$. Applying Lemma~\ref{l:mutation-transjective-vs-regular} repeatedly, we get
\[\dim_k\Hom_\mc(\Sigma^mT_j,M)=\dim_k\Hom_\mc(\Sigma^mT_j,N).
\]
Note that for any integer $l$, we also have $\Sigma^{lm}R=R$ and $\Sigma^{lm}N=N$. Similarly, one obtains
\[\dim_k\Hom_\mc(\Sigma^{lm}T_j,M)=\dim_k\Hom_\mc(\Sigma^{lm}T_j,N).
\]
Consequently, for any integer $l$, we have
\[\dim_k\Hom_\mc(\Sigma^{lm}T_j,M)=\dim_k\Hom_\mc(\Sigma^{lm}T_j,N)=\dim_k\Hom_\mc(T_j,N)=\dim_k\Hom_\mc(T_j,M).
\]
Note that $\mc$ is a cluster category of tame type and $T_j$ is a transjective object. It is clear that $\Sigma^{tm}T_j\not\cong \Sigma^{sm}T_j$ whenever $s\neq t$. In particular, there are infinitely non-isomorphic indecomposable rigid objects $X$ such that $\dim_k\Hom_\mc(X,M)=\dim_k\Hom_\mc(T_j,M)$, which contradicts Lemma~\ref{l:hom-vanish-set}. Consequently, $\operatorname{den}(x_M)\neq \operatorname{den}(x_N)$ in this case. This completes the proof.
\end{proof}

\end{document}